%% LyX 2.0.5.1 created this file.  For more info, see http://www.lyx.org/.
%% Do not edit unless you really know what you are doing.
\documentclass[english]{article}
\usepackage[T1]{fontenc}
\usepackage[latin9]{inputenc}
\usepackage{geometry}
\geometry{verbose,rmargin=3cm}
\usepackage{color}
\usepackage{babel}
\usepackage{refstyle}
\usepackage{units}
\usepackage{amsthm}
\usepackage{amsmath}
\usepackage{amssymb}
\usepackage[all]{xy}
\usepackage[unicode=true,pdfusetitle,
 bookmarks=true,bookmarksnumbered=false,bookmarksopen=false,
 breaklinks=false,pdfborder={0 0 1},backref=false,colorlinks=true]
 {hyperref}
\usepackage{breakurl}

\makeatletter

%%%%%%%%%%%%%%%%%%%%%%%%%%%%%% LyX specific LaTeX commands.

\AtBeginDocument{\providecommand\subref[1]{\ref{sub:#1}}}
\AtBeginDocument{\providecommand\thmref[1]{\ref{thm:#1}}}
\AtBeginDocument{\providecommand\eqref[1]{\ref{eq:#1}}}
\AtBeginDocument{\providecommand\secref[1]{\ref{sec:#1}}}
\AtBeginDocument{\providecommand\lemref[1]{\ref{lem:#1}}}
\AtBeginDocument{\providecommand\corref[1]{\ref{cor:#1}}}
\AtBeginDocument{\providecommand\Defref[1]{\ref{Def:#1}}}
\AtBeginDocument{\providecommand\propref[1]{\ref{prop:#1}}}
\AtBeginDocument{\providecommand\remref[1]{\ref{rem:#1}}}
\AtBeginDocument{\providecommand\exref[1]{\ref{ex:#1}}}
\RS@ifundefined{subref}
  {\def\RSsubtxt{section~}\newref{sub}{name = \RSsubtxt}}
  {}
\RS@ifundefined{thmref}
  {\def\RSthmtxt{theorem~}\newref{thm}{name = \RSthmtxt}}
  {}
\RS@ifundefined{lemref}
  {\def\RSlemtxt{lemma~}\newref{lem}{name = \RSlemtxt}}
  {}

%%%%%%%%%%%%%%%%%%%%%%%%%%%%%% Textclass specific LaTeX commands.
\theoremstyle{plain}
\newtheorem{thm}{\protect\theoremname}
\theoremstyle{definition}
\newtheorem{defn}[thm]{\protect\definitionname}
\theoremstyle{remark}
\newtheorem{rem}[thm]{\protect\remarkname}
\theoremstyle{definition}
\newtheorem{example}[thm]{\protect\examplename}
\theoremstyle{plain}
\newtheorem{cor}[thm]{\protect\corollaryname}
\theoremstyle{plain}
\newtheorem{lem}[thm]{\protect\lemmaname}
\ifx\proof\undefined
\newenvironment{proof}[1][\protect\proofname]{\par
\normalfont\topsep6\p@\@plus6\p@\relax
\trivlist
\itemindent\parindent
\item[\hskip\labelsep
\scshape
#1]\ignorespaces
}{%
\endtrivlist\@endpefalse
}
\providecommand{\proofname}{Proof}
\fi
\theoremstyle{plain}
\newtheorem{prop}[thm]{\protect\propositionname}

\@ifundefined{date}{}{\date{}}
%%%%%%%%%%%%%%%%%%%%%%%%%%%%%% User specified LaTeX commands.

\newref{ex}{name=Example~}
\newref{sub}{name=Subsection~}
\newref{cor}{name=Corollary~}
\newref{rem}{name=Remark~}
\newref{prop}{name=Proposition~}
\newref{lem}{name=Lemma~}
\newref{thm}{name=Theorem~}

\hypersetup{citecolor=blue}

\usepackage{multicol}

\makeatother

\providecommand{\corollaryname}{Corollary}
\providecommand{\definitionname}{Definition}
\providecommand{\examplename}{Example}
\providecommand{\lemmaname}{Lemma}
\providecommand{\propositionname}{Proposition}
\providecommand{\remarkname}{Remark}
\providecommand{\theoremname}{Theorem}

\begin{document}
\global\long\def\dotimes{\widehat{\otimes}}
 \global\long\def\ZZ{\mathbb{Z}}
 \global\long\def\FF{\mathbb{F}}
 \global\long\def\NN{\mathbb{N}}

\title{{\huge{On regular $G$-gradings}}}

\maketitle
\begin{center}
{\large{Eli Aljadeff}}{\huge{}}%
\footnote{The first author was partially supported by the ISRAEL SCIENCE FOUNDATION
(grant No. 1283/08 and grant No. 1017/12) and by the GLASBERG-KLEIN
RESEARCH FUND.%
}\textit{\small{}}\\
\textit{\small{Department of Mathematics, Technion-Israel Institute
of Technology, Haifa 32000, Israel}}\\
\textit{\small{E-mail address: }}\texttt{\textit{\small{aljadeff@tx.technion.ac.il}}}
\par\end{center}{\small \par}

\begin{center}
{\large{and}}\\
{\large{$\;$}}\\
{\large{Ofir David}}\textit{\small{}}\\
\textit{\small{Department of Mathematics, Technion-Israel Institute
of Technology, Haifa 32000, Israel}}\\
\textit{\small{E-mail address: }}\texttt{\textit{\small{ofirdav@tx.technion.ac.il}}}
\par\end{center}{\small \par}
\begin{abstract}
Let $A$ be an associative algebra over an algebraically closed field
$\FF$ of characteristic zero and let $G$ be a finite abelian group.
Regev and Seeman introduced the notion of a regular $G$-grading on
$A$, namely a grading $A=\bigoplus_{g\in G}A_{g}$ that satisfies
the following two conditions: $(1)$ for every integer $n\geq1$ and
every $n$-tuple $(g_{1},g_{2},\dots,g_{n})\in G^{n}$, there are
elements, $a_{i}\in A_{g_{i}}$, $i=1,\dots,n$, such that $\prod_{1}^{n}a_{i}\neq0$
$(2)$ for every $g,h\in G$ and for every $a_{g}\in A_{g},b_{h}\in A_{h}$,
we have $a_{g}b_{h}=\theta_{g,h}b_{h}a_{g}.$ Then later, Bahturin
and Regev conjectured that if the grading on $A$ is regular and minimal,
then the order of the group $G$ is an invariant of the algebra. In
this article we prove the conjecture by showing that $ord(G)$ coincides
with an invariant of $A$ which appears in PI theory, namely $exp(A)$
(the exponent of $A$). Moreover, we extend the whole theory to (finite)
nonabelian groups and show that the above result holds also in that
case.
\end{abstract}

\section{Introduction and statement of the main results}

Group gradings on associative algebras (as well as on Lie and Jordan
algebras) have been an active area of research in the last 15 years
or so. In this article we will consider group gradings on associative
algebras over an algebraically closed field $\FF$ of characteristic
zero. The fact that a given algebra admits additional structures,
namely graded by a group $G$, provides additional information which
may be used in the study of the algebra itself , e.g. in the study
of group rings, twisted group rings and crossed products algebras
in Brauer theory (indeed ``gradings'' on central simple algebras
is an indispensable tool in Brauer theory, as it provides the isomorphism
of $Br(k)$ with the second cohomology group $H^{2}(G_{k},\bar{k}^{\times})$.
Here $k$ is any field, $G_{k}$ is the absolute Galois group of $k$
and $\bar{k}^{\times}$ denotes the group of units of the separable
closure of $k$). In addition, and more relevant to the purpose of
this article, $G$-gradings play an important role in the theory of
polynomial identities. Indeed, if $A$ is a PI-algebra which is $G$-graded,
then one may consider the $T$-ideal of $G$-graded identities (see
\subref{Graded-polynomial-identities}), denoted by $Id_{G}(A)$,
and it turns out in general, that it is easier to describe $G$-graded
identities than the ordinary ones for the simple reason that the former
ones are required to vanish on $G$-graded evaluations, rather than
on arbitrary evaluations. Nevertheless, two algebras $A$ and $B$
which are $G$-graded PI-equivalent are PI-equivalent as well, that
is, $Id_{G}(A)=Id_{G}(B)\Rightarrow Id(A)=Id(B)$.

Recall that a $G$-grading on an algebra $A$ is a vector space decomposition

\[
A\cong\bigoplus_{g\in G}A_{g}
\]
such that $A_{g}A_{h}\subseteq A_{gh}$ for every $g,h\in G$.

A particular type of $G$-gradings which is of interest was introduced
by Regev and Seeman in \cite{regev_$z_2$-graded_2005}, namely regular
$G$-gradings where $G$ is a finite abelian group. Let us recall
the definition.
\begin{defn}[Regular Grading]
Let $A$ be an associative algebra over a field $\FF$ and let $G$
be a finite abelian group. Suppose $A$ is $G$-graded. We say that
the $G$-grading on $A$ is regular if there is a commutation function
$\theta:G\times G\rightarrow F^{\times}$ such that
\begin{enumerate}
\item For every integer $n\geq1$ and every $n$-tuple $(g_{1},g_{2},\dots,g_{n})\in G^{n}$,
there are elements $a_{i}\in A_{g_{i}}$, $i=1,\dots,n$, such that
$\prod_{1}^{n}a_{i}\neq0$. 
\item For every $g,h\in G$ and for every $a_{g}\in A_{g},b_{h}\in A_{h}$,
we have $a_{g}b_{h}=\theta_{g,h}b_{h}a_{g}.$
\end{enumerate}
\end{defn}
\begin{rem}
One of our main tasks in this article is to extend the definition
above to groups which are not necessarily abelian and prove the main
results in that general context. For clarity we will continue with
the exposition of the abelian case and towards the end of the introduction
we will discuss extensions to the nonabelian setting. It seems to
us that the extension to the nonabelian case is rather natural in
view of the abelian case. In those cases below where the statement
in the general case is identical to the abelian case, we will make
a note indicating it. As for the proofs (sections \ref{sec:preliminaries}
and \ref{sec:Main-Theorem}), in case the result holds for arbitrary
groups, we present the general setting only, possibly with some remarks
concerning the abelian case.
\end{rem}
In the first section of this article we present examples of regular
$G$-gradings on finite and infinite dimensional algebras. We also
explain how can one compose algebras with regular gradings. One of
the most important examples of a regular grading on an algebra is
the well known $\nicefrac{\ZZ}{2\ZZ}$-grading on the infinite dimensional
Grassmann algebra $E$.
\begin{example}
\label{ex:grassmann_example}Let $E$ be the Grassmann algebra, defined
as the free algebra $\FF\left\langle e_{i}\mid i\in\NN\right\rangle $
with noncommuting variables, modulo the relations $e_{i}^{2}=0$ and
$e_{i}e_{j}=-e_{j}e_{i}$ for $i\neq j$. We set $E_{0}$ to be the
span of the monomials with even number of variables, and $E_{1}$
the span of monomials with odd number of variables.\\
 It is easy to see that $E=E_{0}\oplus E_{1}$, and this is actually
a regular $\nicefrac{\ZZ}{2\ZZ}$-grading with commutation function
$\tau_{0,0}=\tau_{0,1}=\tau_{1,0}=1$ and $\tau_{1,1}=-1$.\end{example}
\begin{rem}
It is sometimes more convenient to use the multiplicative group $C_{2}=\left\{ \pm1\right\} $
for the grading on the Grassmann algebra. Hence we also write $E=E_{1}\oplus E_{-1}$.
\end{rem}
The Grassmann algebra with its $\nicefrac{\ZZ}{2\ZZ}$-grading has
remarkable properties which are fundamental in the theory of PI-algebras.
Indeed, if $B=B_{0}\oplus B_{1}$ is a $\nicefrac{\ZZ}{2\ZZ}$-graded
algebra, we let $E\dotimes B$ be the Grassmann $\nicefrac{\ZZ}{2\ZZ}$-envelope
of $B$. Recall that the algebra $E\dotimes B$ is $\nicefrac{\ZZ}{2\ZZ}$-graded
as well and its grading is determined by $(E\dotimes B)_{0}=E_{0}\otimes B_{0}$
and $\left(E\dotimes A\right)_{1}=E_{1}\otimes B_{1}.$

A key property of the ``envelope operation'' is the following equality
of $\nicefrac{\ZZ}{2\ZZ}$-graded $T$-ideals of identities (and hence,
also of the corresponding ungraded $T$-ideals of identities). 
\begin{equation}
Id_{\nicefrac{\ZZ}{2\ZZ}}(E\dotimes(E\dotimes B)))=Id_{\nicefrac{\ZZ}{2\ZZ}}(B).\label{eq:Grassmann_envelope}
\end{equation}
We refer to the ``envelope operation'' as being \textit{involutive}.
It is well known that applying this operation, one can extend the
solution of the Specht problem and proof of ``representability''
from affine to non-affine PI-algebras (see \cite{kemer_ideals_1991},
\cite{aljadeff_representability_2009}).

Interestingly, the property satisfied by the Grassmann algebra we
just mentioned follows from the fact that the $\nicefrac{\ZZ}{2\ZZ}$-grading
on $E$ is regular, and indeed in \thmref{Envelope} we show that
a similar property holds for arbitrary regular graded algebras. In
order to state the result precisely we introduce the notion of $G$-envelope
of two algebras $A$ and $B$ where $G$ is a finite abelian group. 
\begin{defn}[$G$-envelope]
 Let $A,B$ be two $G$-graded algebras. We denote by $A\dotimes B$
the $G$-graded algebra defined by $\left(A\dotimes B\right)_{g}=A_{g}\otimes B_{g}$. 
\end{defn}
The following result generalizes \eqref{Grassmann_envelope}. The
proof is presented in \secref{preliminaries}.
\begin{thm}
\label{thm:Envelope} Let $A$ be a regularly $G$-graded algebra
with commutation function $\theta$, and let $B,C$ be two $G$-graded
algebras. 
\begin{enumerate}
\item If $\theta\equiv1$ then $Id_{G}(A\dotimes B)=Id_{G}(B)$.
\item Let $\tilde{A}=\dotimes^{\left|G\right|-1}A$ be the envelope of $\left|G\right|-1$
copies of $A$, then $\tilde{A}$ is regularly $G$-graded and $Id_{G}(\tilde{A}\dotimes(A\dotimes B))=Id_{G}(B)$.
\item $Id_{G}(B)=Id_{G}(C)$ if and only if $Id_{G}(A\dotimes B)=Id_{G}(A\dotimes C)$.
\end{enumerate}
\end{thm}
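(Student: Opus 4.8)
The plan is to reduce everything to a single structural fact: evaluations of a graded polynomial on $A\dotimes B$ are governed by evaluations on $B$ twisted by a scalar coming from the commutation function $\theta$, and by the hypothesis of regularity the $A$-part never vanishes. I would set up notation for graded multilinear polynomials, since graded $T$-ideals are generated by multilinear ones in characteristic zero, and it suffices to check membership on multilinear polynomials.

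First I would treat part~(1). Let $f=f(x_1^{g_1},\dots,x_n^{g_n})$ be a multilinear graded polynomial, a linear combination $\sum_{\sigma\in S_n}\lambda_\sigma x_{\sigma(1)}^{g_{\sigma(1)}}\cdots x_{\sigma(n)}^{g_{\sigma(n)}}$ where the monomials that actually survive are those whose product of indices lies in a fixed $g\in G$. Evaluate on elements $a_i\otimes b_i\in (A\dotimes B)_{g_i}=A_{g_i}\otimes B_{g_i}$. A monomial $x_{\sigma(1)}\cdots x_{\sigma(n)}$ evaluates to $(a_{\sigma(1)}\cdots a_{\sigma(n)})\otimes(b_{\sigma(1)}\cdots b_{\sigma(n)})$. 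Using property~(2) of the regular grading we may reorder the $a$'s into the fixed order $a_1\cdots a_n$ at the cost of a scalar $\theta_\sigma\in\FF^\times$ depending only on the permutation $\sigma$ and the tuple $(g_1,\dots,g_n)$; when $\theta\equiv1$ all these scalars are $1$. Hence $f(a_1\otimes b_1,\dots,a_n\otimes b_n)=(a_1\cdots a_n)\otimes f(b_1,\dots,b_n)$. Now if $f\in Id_G(B)$ this is clearly $0$, so $f\in Id_G(A\dotimes B)$. Conversely, if $f\notin Id_G(B)$ pick $b_i\in B_{g_i}$ with $f(b_1,\dots,b_n)\neq0$, and by property~(1) of regularity pick $a_i\in A_{g_i}$ with $a_1\cdots a_n\neq0$; since $\FF$ is a field and the tensor product is over $\FF$, $(a_1\cdots a_n)\otimes f(b_1,\dots,b_n)\neq0$, so $f\notin Id_G(A\dotimes B)$. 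This gives $Id_G(A\dotimes B)=Id_G(B)$.

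For part~(2) I would first argue that $\tilde A=\dotimes^{|G|-1}A$ is again regularly $G$-graded with commutation function $\theta^{|G|-1}$: property~(1) is inherited componentwise (tensor of finitely many nonzero vectors over a field is nonzero), and property~(2) multiplies the scalars. The point of taking $|G|-1$ copies is that I then want $A\dotimes\tilde A=\dotimes^{|G|}A$ to have trivial commutation function. This is the one genuinely non-formal step: I expect to need that for $g,h\in G$ the product $\theta_{g,h}^{|G|}$ equals $1$, i.e. each value of $\theta$ is a root of unity of order dividing $|G|$. In the abelian case this should follow from combining condition~(2) applied in different orders together with the associativity/consistency constraints that $\theta$ must satisfy on a regularly graded algebra (one shows $\theta$ is, up to coboundary, a bicharacter, hence takes values in $\mu_{|G|}$); this is presumably established in Section~\ref{sec:preliminaries} and I would cite it. Granting that, $A\dotimes\tilde A$ is regular with trivial $\theta$, and applying part~(1) twice, $Id_G((A\dotimes\tilde A)\dotimes B)=Id_G(B)$. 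It remains to observe that the $G$-envelope is associative and commutative on objects, so $(A\dotimes\tilde A)\dotimes B=\tilde A\dotimes(A\dotimes B)$, which is exactly the claim.

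Part~(3) is then formal. One direction is immediate: $Id_G(B)=Id_G(C)$ forces $A\dotimes B$ and $A\dotimes C$ to satisfy the same graded identities, because a graded evaluation on $A\dotimes B$ produces, as computed above, a scalar-weighted graded evaluation on $B$ — more carefully, a multilinear $f$ vanishes on $A\dotimes B$ iff for every choice of homogeneous $a_i$ the polynomial $f^{(\theta)}$ (the reordering-twisted polynomial, whose coefficients are $\lambda_\sigma\theta_\sigma$) vanishes on $B$, and this condition depends on $B$ only through $Id_G(B)$. For the converse, suppose $Id_G(A\dotimes B)=Id_G(A\dotimes C)$. Apply part~(2): envelope both sides with $\tilde A$, getting $Id_G(\tilde A\dotimes(A\dotimes B))=Id_G(\tilde A\dotimes(A\dotimes C))$ (again because enveloping with a fixed regular algebra preserves equality of graded $T$-ideals, by the direction just proved), and by part~(2) the left side is $Id_G(B)$ and the right side is $Id_G(C)$. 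Hence $Id_G(B)=Id_G(C)$.

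The main obstacle is the scalar bookkeeping in part~(2): verifying that $|G|-1$ copies suffice, which rests on $\theta$ taking values in $\mu_{|G|}$; everything else is a clean tensor-product computation over a field together with the reduction to multilinear identities.
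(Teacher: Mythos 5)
Your proposal is correct and follows essentially the same route as the paper: your evaluation identity (monomials on $A\dotimes B$ factor as $\prod_i a_i\otimes f^{\theta}(b_1,\dots,b_n)$, with regularity used to cancel the left-hand factor) is precisely \lemref{envelope}, and your part (2) is the paper's argument that the $|G|$-fold envelope has commutation function $\theta^{|G|}\equiv1$ — the fact you defer, that $\theta$ takes values in the $|G|$-th roots of unity, is indeed established in \secref{preliminaries}, since for abelian $G$ the commutation function is literally a skew-symmetric bicharacter (no coboundary adjustment is needed). The only divergence is minor: for the converse direction of part (3) you route through part (2), whereas the paper reads both directions of (3) directly off \lemref{envelope} via the invertible correspondence $f\mapsto f^{\theta}$ on multilinear strongly homogeneous polynomials.
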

Our main goal in this article is to investigate the general structure
of (minimal) regular gradings on associative algebras over an algebraically
closed field of characteristic zero and in particular to give a positive
answer to conjecture 2.5 posed by Bahturin and Regev in \cite{bahturin_graded_2009}.

It is easy to see that a given algebra $A$ may admit regular gradings
with nonisomorphic groups and even with groups of distinct orders.
Therefore, in order to put some restrictions on the possible regular
gradings on an algebra $A$, Bahturin and Regev introduced the notion
of regular gradings which are \textit{minimal}. A regular $G$-grading
on an algebra $A$ with commutation function $\theta$ is said to
be \textit{minimal} if for any $e\neq g\in G$ there is $g'\in G$
such that $\theta(g,g')\neq1$.

Given a regular $G$-grading on an algebra $A$ with commutation function
$\theta$, one may construct a minimal regular grading with a homomorphic
image $\bar{G}$ of $G$. To see this, let 
\[
H=\{h\in G\mid\theta(h,g)=1:\mbox{for all }g\in G\}.
\]
One checks easily that $\theta$ is a skew symmetric bicharacter and
hence $H$ is a subgroup of $G$. Consequently, the commutation function
$\theta$ on $G$ induces a commutation function $\tilde{\theta}$
on $\bar{G}=G/H$. Moreover, the induced regular $\bar{G}$-grading
on $A$ is minimal.

In this article we consider the problem of uniqueness of a minimal
regular $G$-grading on an algebra $A$ (assuming it exists). It is
not difficult to show that an algebra $A$ may admit nonisomorphic
minimal regular $G$-gradings. Furthermore, an algebra $A$ may admit
minimal regular gradings with nonisomorphic abelian groups. However,
it follows from our results (as conjectured by Bahturin and Regev)
that the order of the group is uniquely determined. In fact, the order
of any group which provides a minimal regular grading on an algebra
$A$ coincides with a numerical invariant of $A$ which arises in
PI-theory, namely the PI-exponent of the algebra $A$ (denoted by
$exp(A)$). In order to state the result precisely we need some terminology
which we recall now.

Given a regular $G$-grading on an $\FF$-algebra $A$ we consider
the corresponding commutation matrix $M^{A}$ defined by $\left(M^{A}\right)_{g,h}=\theta(g,h)$,
$g,h\in G$ (see \cite{bahturin_graded_2009}). The commutation matrix
encodes properties of $\theta$. For instance, a regular grading is
minimal if and only if there is only one row of ones in $M^{A}$ (resp.
with columns).

Next we recall the definition of $\exp(A)$. For any positive integer
$n$ we consider the $n!$-dimensional $\FF$-space $P_{n}$, spanned
by all monomials of degree $n$ on $n$ different variables $\{x_{1},\ldots,x_{n}\}$
and let

\[
c_{n}(A)=\dim_{F}(P_{n}/(P_{n}\cap Id(A)).
\]
This is the $n$-th coefficient of the codimension sequence of the
algebra $A$. It was shown by Giambruno and Zaicev (see \cite{giambruno_codimension_1998},
\cite{giambruno_exponential_1999}) that the limit 
\[
\lim_{n\rightarrow\infty}c_{n}(A)^{1/n}
\]
exists and is a nonnegative integer. The limit is denoted by $\exp(A)$.

We can now state the main result of the paper in case the gradings
on $A$ are given by abelian groups.
\begin{thm}
\label{thm:Main_Theorem} Let $A$ be an algebra over an algebraic
closed field $\FF$ of characteristic zero and suppose it admits minimal
regular gradings by finite abelian groups $G$ and $H$.

Then
\begin{enumerate}
\item $\left|G\right|=\left|H\right|$ and this invariant is equal to $exp(A)$.
In particular the algebra $A$ is PI.
\item The commutation matrices $M_{G}^{A}$ and $M_{H}^{A}$ are conjugate.
In particular $tr(M_{G}^{A})=tr(M_{H}^{A})$ and $\det(M_{G}^{A})=\det(M_{H}^{A})$.
\item In fact, $\det(M_{G}^{A})=\pm\left|G\right|^{\left|G\right|/2}$.
\end{enumerate}
\end{thm}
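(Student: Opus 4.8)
My plan is to treat the three assertions in the order (3), (1), (2): part~(3) is linear algebra about the commutation matrix, part~(1) carries the real content, and part~(2) then follows almost formally. For (3), note first that minimality means the skew-symmetric bicharacter $\theta$ has trivial radical, i.e.\ it is non-degenerate. Then $M:=M_G^A$ is Hermitian, since every entry $\theta(g,h)$ is a root of unity and $\overline{\theta(g,h)}=\theta(g,h)^{-1}=\theta(h,g)$. A one-line computation using the bicharacter identities and skew-symmetry gives $(M^2)_{g,k}=\sum_{h\in G}\theta(h,kg^{-1})$, and since $\theta(-,kg^{-1})$ is the trivial character precisely when $kg^{-1}$ lies in the (trivial) radical, this sum equals $|G|\delta_{g,k}$; hence $M^2=|G|I$. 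So $M$ is diagonalizable with all eigenvalues in $\{\pm\sqrt{|G|}\}$, and writing $p,q$ for their multiplicities gives $\det M=(-1)^q|G|^{|G|/2}$, which is (3).

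For (1) the crucial reduction is that for \emph{every} regularly $G$-graded algebra $B$ with commutation function $\theta$ one has $\mathrm{Id}_G(B)=I_\theta$, where $I_\theta$ is the $T_G$-ideal generated by the commutation identities $x_{i,g}x_{j,h}-\theta(g,h)x_{j,h}x_{i,g}$. The inclusion $\supseteq$ is condition~(2) of regularity. For $\subseteq$, one observes that modulo $I_\theta$ every multilinear graded polynomial of a fixed multidegree $(g_1,\dots,g_n)$ collapses to a scalar multiple of the single sorted monomial $x_1\cdots x_n$ (sorting is well defined because $\theta$ is a bicharacter), while condition~(1) says this monomial is not a graded identity of $B$, forcing the scalar to vanish. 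Consequently $\mathrm{Id}_G(B)$, hence the ordinary $T$-ideal $\mathrm{Id}(B)$ and the number $\exp(B)$, depend only on the pair $(G,\theta)$ (and incidentally $c_n^G(B)=|G|^n$). It therefore suffices to exhibit one convenient model algebra for a given $(G,\theta)$ and compute its exponent.

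I would take the model $A_0:=\FF^{\alpha}[G]\dotimes E$, where $\bar\epsilon\colon G\to\ZZ/2\ZZ$ is the homomorphism with $\bar\epsilon(g)=0\iff\theta(g,g)=1$, the twisted group algebra $\FF^{\alpha}[G]$ carries the $\ZZ/2\ZZ$-grading induced by $\bar\epsilon$, and $\alpha$ is a $2$-cocycle whose commutator bicharacter equals $\theta(g,h)(-1)^{\bar\epsilon(g)\bar\epsilon(h)}$ (this product is alternating, so such $\alpha$ exists over $\FF$). A direct check shows $A_0$ is regularly $G$-graded, with $(A_0)_g=u_g\otimes E_{\bar\epsilon(g)}$ and commutation function exactly $\theta$; hence $\exp(A)=\exp(A_0)$. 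Now $\FF^{\alpha}[G]$ is semisimple, and — this is where non-degeneracy of $\theta$ enters — it is \emph{graded-simple} for the $\bar\epsilon$-grading: either the commutator of $\alpha$ is non-degenerate and $\FF^{\alpha}[G]\cong M_d(\FF)$ is simple, or its radical has order $2$, generated by an \emph{odd} element $u_{h_*}$, so that $\FF^{\alpha}[G]\cong M_d(\FF)\times M_d(\FF)$ but the two central idempotents are inhomogeneous. By the Giambruno--Zaicev description of the exponent of a Grassmann envelope of a graded semisimple algebra, $\exp(A_0)=\dim_\FF\FF^{\alpha}[G]=|G|$. Applying this to both minimal regular gradings yields $|G|=\exp(A)=|H|$ and shows $A$ is PI, which is (1).

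For (2): by (1) the matrices $M_G^A$ and $M_H^A$ have the same size, and by the computation in (3) each is Hermitian with square $|G|I$, hence is determined up to conjugacy by its trace $\sum_g\theta(g,g)$, which equals $|G|$ when $\theta$ is alternating and $0$ otherwise. So it suffices to know that ``$\theta$ is alternating'' is an invariant of $A$ itself; from the model this holds exactly when $\bar\epsilon$ is trivial, i.e.\ when $\mathrm{Id}(A)=\mathrm{Id}(M_d(\FF))$ is the $T$-ideal of a finite-dimensional algebra — a property of $A$ alone. Hence $\mathrm{tr}(M_G^A)=\mathrm{tr}(M_H^A)$, the matrices are conjugate, and the trace and determinant statements follow (the determinant having already been pinned down in (3)). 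The main obstacle in this program is the exponent computation for $A_0$: constructing the model, verifying graded-simplicity of $\FF^{\alpha}[G]$ (which is exactly where non-degeneracy of $\theta$ is used), and feeding the correct graded-semisimple data into the Giambruno--Zaicev formula; the reduction $\mathrm{Id}_G(B)=I_\theta$ and the linear-algebra parts are routine by comparison.
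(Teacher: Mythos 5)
Your proposal is correct and follows essentially the same route as the paper: reduce $Id_{G}(A)$ to data depending only on the commutation function, realize $\theta$ by the Grassmann envelope of a twisted group algebra $\FF^{\alpha}G$, use nondegeneracy (i.e.\ minimality) to show this model is $\ZZ/2\ZZ$-graded simple, invoke the Giambruno--Zaicev formula to get $\exp(A)=\dim\FF^{\alpha}G=\left|G\right|$, and then deduce parts (2) and (3) from $M^{2}=\left|G\right|I$ together with the invariance of the trace. The only deviations are abelian-case shortcuts --- Scheunert's theorem in place of the paper's Schur-multiplier construction of $\alpha$, a direct analysis of the radical of the modified bicharacter in place of the ray-class lemmas, and the finite-dimensionality (Capelli) criterion for distinguishing the graded-simple types --- none of which changes the structure of the argument.
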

\begin{rem}
Some of the results stated in Theorem \ref{thm:Main_Theorem} were
conjectured in \cite{bahturin_graded_2009}. Specifically, as mentioned
above, Bahturin and Regev conjectured that the order of a group which
provides a minimal regular gradings on an algebra $A$ is uniquely
determined. Moreover, they conjectured that if $M_{G}^{A}$ and $M_{H}^{A}$
are the commutation matrices of two minimal regular gradings on $A$
with groups $G$ and $H$ respectively, then $\det(M_{G}^{A})=\det(M_{H}^{A})\neq0$. 
\end{rem}

\subsection{Not necessarily abelian groups}

Suppose now that $G$ is an arbitrary finite group and let $A$ be
a $G$-graded algebra. As above, $A$ is an associative algebra over
an algebraically closed field $\FF$ of characteristic zero. We denote
by $A_{g}$ the corresponding $g\in G$-homogeneous component.
\begin{defn}
We say that the $G$-grading on $A$ is \emph{regular} if the following
two conditions hold. 
\begin{enumerate}
\item (commutation) For any $n$-tuple $(g_{1},\dots,g_{n})\in G^{n}$ and
for any permutation $\sigma\in Sym(n)$ such that the products $g_{1}\cdots g_{n}$
and $g_{\sigma(1)}\cdots g_{\sigma(n)}$ yield the same element of
$G$, there is a a nonzero scalar $\theta_{((g_{1},\dots,g_{n}),\sigma)}\in F^{\times}$
such that for any $n$-tuple $(a_{g_{1}},\ldots,a_{g_{n}})\in A^{n}$
with $a_{g_{i}}\in A_{g_{i}}$, the following equality holds 
\[
a_{g_{1}}\cdots a_{g_{n}}=\theta_{((g_{1},\dots,g_{n}),\sigma)}a_{g_{\sigma(1)}}\cdots a_{g_{\sigma(n)}}.
\]

\item (regularity) For any $n$-tuple $(g_{1},\dots,g_{n})\in G^{n}$, there
exists an $n$-tuple $(a_{g_{1}},\ldots,a_{g_{n}})\in A^{n}$ with
$a_{g_{i}}\in A_{g_{i}}$ such that $a_{g_{1}}\cdots a_{g_{n}}\neq0$.
\end{enumerate}
\end{defn}
\begin{rem}
In the special case where the elements $g,g'\in G$ commute we write
$\theta_{g,g'}$ instead of $\theta_{((g,g'),(12))}$. In particular
we will often use the notation $\theta_{g,g}$. Note that if $G$
is abelian, then $\theta_{\left((g_{1},...,g_{n}),\sigma\right)}$
is determined by $\theta_{g_{i},g_{j}},\;1\leq i,j\leq n$.
\end{rem}
Typical examples of regularly graded algebras ($G$ arbitrary) are
the well known group algebras $\FF G$,  and more generally, any
twisted group algebra $\FF^{\alpha}G$ where $\alpha$ is a $2$-cocycle
on $G$ with values in $\FF^{\times}$. Indeed, this follows easily
from the fact that each homogeneous component is $1$-dimensional
and every nonzero homogeneous element is invertible.

Additional examples can be obtained as follows.
\begin{enumerate}
\item If $A$ is a regularly $G$-graded algebra then $E\otimes A$ has
a natural regular $\nicefrac{\ZZ}{2\ZZ}\times G$-grading where $E$
is the infinite dimensional Grassmann algebra.
\item Let $A$ be a regularly $G$-graded algebra and suppose the group
$G$ contains a subgroup $H$ of index $2$.  Then we may view $A$
as a $\nicefrac{\ZZ}{2\ZZ}\cong G/H$-graded algebra and we let $A=A_{0}\oplus A_{1}$
be the corresponding decomposition. Let $E(A)=\left(E_{0}\otimes A_{0}\right)\oplus\left(E_{1}\otimes A_{1}\right)$
be the Grassmann envelope of $A$ and consider the following $G$-grading
on it. For any $g\in H$, we put $E(A)_{g}=E_{0}\otimes A_{g}$ whereas
if $g\not\in H$ we put $E(A)_{g}=E_{1}\otimes A_{g}$. We claim the
grading is regular. Indeed, let $(g_{1},\ldots,g_{n})\in G^{n}$ and
let $\sigma\in Sym(n)$ be a permutation such that $g_{1}\cdots g_{n}=g_{\sigma(1)}\cdots g_{\sigma(n)}$.
Then for elements $z_{g_{1}},\ldots,z_{g_{n}}$where $z_{g_{i}}\in E(A)_{g_{i}}$,
we have 
\[
z_{g_{1}}\cdots z_{g_{n}}=\tau((g_{1}H,\ldots,g_{n}H),\sigma)\theta((g_{1},\ldots,g_{n}),\sigma)z_{g_{\sigma}(1)}\cdots z_{g_{\sigma}(n)}
\]
where $\tau$ is the commutation function of the infinite Grassmann
algebra. For future reference we denote the commutation function
which corresponds to the regular $G$-grading on $E(A)$ by $\tau\theta$. 
\end{enumerate}
Following the discussion in the abelian case we define now nondegenerate
gradings for arbitrary finite groups as the counterpart of minimal
gradings. Let $A$ be an associative algebra and suppose it has a
regular $G$-grading with commutation function $\theta$. We say that
the grading is nondegenerate if for every $g\neq e$ in $G$, there
is an element $g'\in C_{G}(g)$ (the centralizer of $g$ in $G$)
such that $\theta_{(g,g^{'})}\neq1$.
\begin{rem}
It turns out (see \lemref{construct_algebra}) that $g\mapsto\theta_{g,g}$
is a homomorphism from $G$ to $\left\{ \pm1\right\} $ and therefore
its kernel $H=\{g\in G:\theta_{g,g}=1\}$ is a subgroup of $G$ (of
index $\leq2$). In case $H=G$, there is a cohomology class $\left[\alpha\right]\in H^{2}(G,\FF^{\times})$
such that $\FF^{\alpha}G$ has commutation function $\theta$. Then,
the nondegeneracy of the $G$-grading on $A$ corresponds to $\alpha$
being a nondegenerate $2$-cocycle. Groups $G$ which admit nondegenerate
$2$-cocycle are called ``central type''. It is a rather difficult
problem to classify central type groups. It is known, using the classification
of finite simple groups(!), that any central type group must be solvable.
It seems to be an interesting problem to classify finite groups which
admit nondegenerate commutation functions (modulo the classification
of central type groups).
\end{rem}
Our main results in the general case are extensions of the results
appearing in \thmref{Main_Theorem}.
\begin{thm}
\thmref{Main_Theorem}(1) holds for arbitrary nondegenerate regular
gradings (i.e. $G$ is not necessarily abelian).\end{thm}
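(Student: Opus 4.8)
Since $\exp(A)$ depends only on the ordinary $T$-ideal $Id(A)$, the statement reduces to a single claim: \emph{if $A$ admits a nondegenerate regular grading by a finite group $G$, then $A$ is PI and $\exp(A)=|G|$}. Applying this to the two given gradings then yields $|G|=\exp(A)=|H|$. The plan is to prove the two inequalities $\exp(A)\le|G|$ and $\exp(A)\ge|G|$ separately; the first uses only the commutation axiom, the second uses regularity and nondegeneracy together with \thmref{Envelope} and \lemref{construct_algebra}.

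For the upper bound I would argue directly with codimensions. Let $f=f(y_{1},\dots ,y_{n})$ be a multilinear polynomial; decomposing into multihomogeneous pieces, I may assume that in every evaluation each $y_{j}$ ranges over a fixed component $A_{h_{j}}$. The commutation axiom says that, for such an evaluation, any two of the $n!$ monomials whose underlying products in $G$ coincide produce proportional values, with a proportionality constant independent of the chosen homogeneous elements. Grouping the monomials by the element of $G$ in which they land, the span of all their values on a fixed degree-pattern $(h_{1},\dots ,h_{n})$ has dimension at most $|G|$; summing over the $|G|^{n}$ patterns gives $c_{n}(A)\le|G|^{\,n+1}$, hence $\exp(A)=\lim_{n}c_{n}(A)^{1/n}\le|G|$ and in particular $A$ is PI.

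For the lower bound I first use the \emph{regularity (nonvanishing)} axiom to show that $Id_{G}(A)$ is as small as possible: reducing a multilinear graded identity modulo the commutation relations attached to $\theta$ brings it to a scalar multiple of a single monomial, and by nonvanishing that monomial is not an identity, so the scalar vanishes. Hence $Id_{G}(A)$ equals the $T_{G}$-ideal generated by the commutation relations of $(G,\theta)$, and in particular it depends only on $(G,\theta)$. Using \lemref{construct_algebra} together with the $E(A)$-construction of Section~1, I then exhibit a concrete algebra $\mathcal A_{\theta}$ that is again regularly $G$-graded with commutation function $\theta$: namely $\mathcal A_{\theta}=\FF^{\alpha}G$ when $g\mapsto\theta_{g,g}$ is trivial, and $\mathcal A_{\theta}=E(\FF^{\alpha'}G)$, the Grassmann envelope relative to the index-$2$ subgroup $H=\{g:\theta_{g,g}=1\}$ (here $\alpha'$ realizes the commutation function $\tau^{-1}\theta$, which has trivial diagonal), otherwise. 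Since $A$ and $\mathcal A_{\theta}$ are both regular with the same $\theta$, the previous paragraph gives $Id_{G}(A)=Id_{G}(\mathcal A_{\theta})$, hence $Id(A)=Id(\mathcal A_{\theta})$ and $\exp(A)=\exp(\mathcal A_{\theta})$.

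It remains to compute $\exp(\mathcal A_{\theta})=|G|$, which I would do by translating nondegeneracy of the grading into the language of $\theta$-regular (equivalently $\alpha$- or $\alpha'$-regular) conjugacy classes. In the first case nondegeneracy says that the only $\alpha$-regular class is $\{e\}$, i.e.\ $Z(\FF^{\alpha}G)=\FF$, i.e.\ $\FF^{\alpha}G$ is simple; being $|G|$-dimensional over an algebraically closed field it is $M_{n}(\FF)$ with $n^{2}=|G|$, so $\exp(\mathcal A_{\theta})=n^{2}=|G|$. In the second case nondegeneracy says precisely that no $\alpha'$-regular class other than $\{e\}$ is contained in $H$; via the crossed-product description $\FF^{\alpha'}G=\FF^{\alpha'}H\ast(G/H)$ this is exactly the assertion that $\FF^{\alpha'}G$ is graded-simple as a $(G/H)$-graded algebra, and then, by the structure theory of graded codimension growth (Giambruno--Zaicev, and Kemer's Grassmann-envelope correspondence), the ordinary exponent of $E$ tensored with a finite-dimensional graded-simple algebra equals the dimension of that algebra, so again $\exp(\mathcal A_{\theta})=\dim_{\FF}\FF^{\alpha'}G=|G|$. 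Combining with the upper bound gives $\exp(A)=|G|$, and hence $|G|=|H|=\exp(A)$ with $A$ PI. I expect the real obstacle to be the index-$2$ case just sketched: pinning down $\mathcal A_{\theta}$ as the Grassmann envelope of a twisted group algebra, proving the equivalence between nondegeneracy of $\theta$ and $(G/H)$-graded-simplicity of $\FF^{\alpha'}G$ (the delicate point being the interplay of $\theta$-regular classes inside $H$ with the conjugation action of $G/H$), and invoking the exact value $\exp\bigl(E(D)\bigr)=\dim_{\FF}D$ for $D$ graded-simple; the upper bound and the identification of $Id_{G}(A)$ with the commutation $T_{G}$-ideal are, by contrast, routine consequences of the two axioms and of the results already established in the paper.
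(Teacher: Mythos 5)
Your route is essentially the paper's own: reduce everything to the fact that the graded identities of a regular grading depend only on $(G,\theta)$ (this is \lemref{Id_G->Id}; your ``reduce to a single monomial'' step is the same argument once one first splits a multilinear identity into its strongly homogeneous components, which is legitimate because homogeneous components of an evaluation lie in distinct $A_g$'s); realize $\theta$ by $\FF^{\alpha}G$, respectively by the Grassmann envelope of $\FF^{\alpha'}G$ relative to $H=\ker(g\mapsto\theta_{g,g})$ (\lemref{construct_twisted_group_algebra}, \lemref{construct_algebra}, which you cite rather than reprove); translate nondegeneracy into a simplicity statement about the twisted group algebra; and finish with Regev's $\exp(M_k(\FF))=k^2$ and Giambruno--Zaicev's $\exp(E(B))=\dim B$. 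Your codimension estimate $c_n(A)\le|G|^{n+1}$ is correct but redundant, since $Id(A)=Id(\mathcal A_\theta)$ already pins down the exponent exactly. The case $H=G$ is complete as you state it: nondegeneracy means the only ray ($\alpha$-regular) class is $\{e\}$, so $Z(\FF^{\alpha}G)=\FF$ and the algebra is $M_{\sqrt{|G|}}(\FF)$; this is \lemref{Ray_class} and \lemref{minimal+twisted}.

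The genuine gap is exactly where you place it, and it is not closed. In the index-$2$ case you assert (even as an equivalence) that ``no $\alpha'$-regular class other than $\{e\}$ lies in $H$'' means $\FF^{\alpha'}G$ is $G/H$-graded-simple, but no proof is offered, and the needed implication is not a formal consequence of the classical description of centers by regular classes: the hypothesis constrains $h\in H$ only through its centralizer in all of $G$, whereas graded-simplicity is governed by $\alpha'$-regularity with respect to $G$ and with respect to $H$ separately (i.e.\ by the centers of $\FF^{\alpha'}G$ and of $\FF^{\alpha'|_H}H$). The paper's \lemref{generalized_simple_condition} is precisely this bridge: assuming neither $\FF^{\alpha'}G$ nor $\FF^{\alpha'|_H}H$ is simple, it takes ray elements $a$ (attached to an $H$-class) and $b$ (attached to a $G$-class, necessarily outside $H$), uses some $g_1\in C_G(h_0)\setminus H$ with $U_{g_1}aU_{g_1}^{-1}=ca$, $c\neq1$, to derive $ab=cab$, a contradiction; then either algebra being simple forces $\ZZ_2$-simplicity, and \lemref{minimal-Z_2-simple}, \corref{3_types_for_Z_2_simple} together with $\dim B_1=\dim B_{-1}=|H|$ give $\exp(E(B))=\dim B=|G|$. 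Without an argument of this kind, your sketch leaves unproved the one step that carries the weight of the theorem beyond the $H=G$ case.
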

\begin{rem}
In case $G$ is abelian, the commutation function $\theta$ is a skew
symmetric bicharacter (see Definition \ref{Def:bicharacter}). In
this case, it is a well known theorem of Scheunert \cite{scheunert1979generalized}
that $\theta$ arises from a 2-cocycle (as mentioned in the previous
remark). The notion of a bicharacter was considerably generalized
to cocommutative Hopf algebras (and hence in particular to group algebras)
(see \cite{Bahturin2001246}). Furthermore, whenever the bicharacter
is skew symmetric, the theorem of Scheunert can be extended to that
case. However, it should be noted that already for group algebras
$\FF G$ where $G$ is a nonabelian group, the linear extension of
$\beta(g,h)=f(g,h)/f(h,g)$ for a 2-cocycle $f:G\times G\to\FF^{\times}$
is not in general a skew symmetric bicharacter on $\FF G$.

Our results should be viewed or interpreted as to overcome this problem
by considering commutation functions which satisfy certain natural
necessary conditions in case they arise from 2-cocycles on $G$, and
then \lemref{construct_twisted_group_algebra} provides a generalization
of Scheunert's theorem to that context, namely every such commutation
function on $G$ ideed arises from a 2-cocycle on $G$. 
\end{rem}

\subsubsection{\label{sub:Commutation-matrix-Introduction}Commutation matrix}

As for the commutation matrix and its characteristic values, we need
to fix some notation. Suppose $A$ has a regular $G$-grading and
let $\theta^{A}$ be the corresponding commutation function. Suppose
first that $\theta_{g,g}^{A}=1$ for every $g\in G$. In that case
we know that the commutation function corresponds to an element $[\alpha]\in H^{2}(G,\FF^{\times})$.
With this data we consider the corresponding twisted group algebra
$B=\FF^{\alpha}G$ which is regularly $G$-graded with commutation
function $\theta^{A}$. It is well known that the algebra $\FF^{\alpha}G$
is spanned over $\FF$ by a set of invertible homogeneous elements
$\{U_{g}\}_{g\in G}$ that satisfy $U_{g}U_{h}=\alpha(g,h)U_{gh}$
for every $g,h\in G$.

Let us construct the corresponding commutation matrix. For every pair
$(g,h)\in G^{2}$ we consider the element $U_{g}U_{h}U_{g}^{-1}U_{h}^{-1}\in\FF^{\alpha}G$.
The matrix $M_{G}^{A}$ is determined by $(M_{G}^{A})_{(g,h)}=U_{g}U_{h}U_{g}^{-1}U_{h}^{-1}$
for every $g,h\in G$ and we note that this element does not depend
on the choice of the basis $\left\{ U_{g}:g\in G\right\} $.

Next we consider the general case. Let $\psi:G\to\FF^{\times}$ be
the map determined by $\psi(g)=\theta_{g,g}$. The function $\psi$
will be shown to be a homomorphism with its image contained in $\left\{ \pm1\right\} $
and we set $H=\ker(\psi)=\{g\in G:\theta_{g,g}=1\}$. Applying the
construction above we may define a $G$-grading on $E(A)$ where the
$\nicefrac{\ZZ}{2\ZZ}$-grading on $A$ is defined by $A=A_{H}\oplus A_{G\backslash H}$.
The commutation function $\tau\cdot\theta^{A}$ of $E(A)$ satisfies
$(\tau\theta^{A})_{g,g}=1$ for every $g\in G$. As in the previous
case the function $\tau\theta^{A}$ corresponds to a cohomology class
$\left[\alpha\right]\in H^{2}(G,F^{\times})$ were $\alpha$ is a
representing $2$-cocycle. We let $B=\FF^{\alpha}G$ be the corresponding
twisted group algebra with commutation function $\theta^{B}=\tau\theta^{A}$
and for every $g,h\in G$ we consider the element $U_{g}U_{h}U_{g}^{-1}U_{h}^{-1}\in\FF^{\alpha}G$.
The commutation matrix is defined by

\[
(M_{G})_{g,h}=\tau_{(\psi(g),\psi(h))}U_{g}U_{h}U_{g}^{-1}U_{h}^{-1}.
\]
We will usually write $\tau_{g,h}$ instead of $\tau_{\psi(g),\psi(h)}$.
Note that if $\psi\equiv1$, then $H=G$ and $\tau\mid_{G}\equiv1$,
so we have that $\theta^{B}=\theta^{A}$ as in the first case.
\begin{thm}
Let $A$ be an associative algebra over an algebraically closed field
of characteristic zero. Suppose $A$ admits a nondegenerate regular
$G$-grading and let $\theta$ be the corresponding commutation function.
Let $M_{G}$ be the commutation matrix constructed above. Then $M_{G}^{2}=\left|G\right|\cdot Id$.
\end{thm}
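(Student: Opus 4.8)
The plan is to reduce the statement to a computation inside the twisted group algebra $B=\FF^\alpha G$ (for the class $[\alpha]$ attached to $\tau\theta^A$) together with the sign contribution of the Grassmann commutation function $\tau$. Write $C_{g,h}=U_gU_hU_g^{-1}U_h^{-1}\in B$, so that $(M_G)_{g,h}=\tau_{g,h}\,C_{g,h}$, and recall that $C_{g,h}$ is a scalar multiple of $U_e$ (a "commutator value'' in the twisted group algebra), hence lies in the center $\FF U_e$; abusing notation we treat $C_{g,h}\in\FF^\times$. The first key step is to record the basic multiplicativity identities: $C_{g,h}=C_{h,g}^{-1}$, and $C_{g_1g_2,h}=(\text{conjugate of }C_{g_1,h})\cdot C_{g_2,h}$, which collapse to ordinary multiplicativity once we note commutator values are central. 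Combined with $\tau_{g,h}=\psi(g)\psi(h)$ when $\psi(g)=\psi(h)=-1$ and $\tau_{g,h}=1$ otherwise, together with $\psi$ being a homomorphism into $\{\pm1\}$, one checks that $\beta_{g,h}:=\tau_{g,h}C_{g,h}$ is a skew-symmetric bicharacter: $\beta_{g,h}\beta_{h,g}=1$ and $\beta_{g_1g_2,h}=\beta_{g_1,h}\beta_{g_2,h}$. This is where one must be careful with the nonabelian bookkeeping — the conjugation ambiguities in $C$ have to cancel, and the point is precisely that $\tau\theta^A$ has all diagonal entries equal to $1$, so the sign-twist $\tau$ is exactly what is needed to make the product bicharacter well-behaved.

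The second step computes $(M_G^2)_{g,h}=\sum_{k\in G}\beta_{g,k}\beta_{k,h}$. Using skew-symmetry rewrite this as $\sum_{k}\beta_{g,k}\beta_{k,h}=\sum_k \beta_{gh^{-1}?}\dots$ — more precisely, fix $g,h$ and set $s=$ the "difference''; by the bicharacter property $\beta_{g,k}\beta_{k,h}=\beta_{g,k}\overline{\beta_{h,k}}=\beta_{gh^{-1}\text{-type}}$, so the sum becomes $\sum_{k\in G}\chi(k)$ for a suitable character $\chi$ of $G$ built from $\beta$. The classical orthogonality relation gives $\sum_{k}\chi(k)=|G|$ if $\chi$ is trivial and $0$ otherwise. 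So the whole problem is reduced to: show that the relevant character $\chi=\chi_{g,h}$ is trivial precisely when $g=h$, and that when $g=h$ the diagonal entry is genuinely $|G|$ (here one uses $\beta_{g,g}=\tau_{g,g}C_{g,g}=1$ since $C_{g,g}=1$ and $\tau_{g,g}=1$ as $\psi(g)^2=1$... wait, $\psi(g)=-1$ gives $\tau_{g,g}=\psi(g)^2=1$ as well, so $\beta_{g,g}=1$ always, confirming the diagonal of $M_G$ is all ones and the $g=h$ term of $M_G^2$ is $\sum_k 1=|G|$ only after checking the off-diagonal sums vanish).

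The third and crucial step is the vanishing of off-diagonal entries, i.e. that $\chi_{g,h}$ is nontrivial whenever $g\neq h$. This is exactly where nondegeneracy enters: the nondegeneracy hypothesis says that for every $t\neq e$ there is $t'\in C_G(t)$ with $\theta_{t,t'}\neq1$; one must translate this into a statement about $\beta$ (possibly after passing through $E(A)$, since nondegeneracy is phrased for $\theta^A$ while $\beta$ is built from $\tau\theta^A$ — one should check the Grassmann twist does not destroy nondegeneracy, or re-derive nondegeneracy directly for $\tau\theta^A$). Granting that, if $g\neq h$ put $t=gh^{-1}\neq e$; the character $\chi_{g,h}$ restricted along the relevant subgroup is controlled by $\beta_{t,-}$, which is nontrivial by nondegeneracy, so $\chi_{g,h}$ is nontrivial and the sum is $0$. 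Putting the three steps together yields $M_G^2=|G|\cdot\mathrm{Id}$. I expect the main obstacle to be step one together with the $E(A)$-bookkeeping in step three: making sure all the conjugation/sign ambiguities inherent in the nonabelian commutator $U_gU_hU_g^{-1}U_h^{-1}$ and in the passage $\theta^A\rightsquigarrow\tau\theta^A$ cancel cleanly so that $\beta$ really is an honest skew-symmetric bicharacter on $G$ to which character orthogonality applies; once that is set up, steps two and three are short.
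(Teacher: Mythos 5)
Your plan has a genuine gap at its very first step, and it is precisely the point where the nonabelian case differs from the abelian one. You declare that $C_{g,h}=U_gU_hU_g^{-1}U_h^{-1}$ ``lies in the center $\FF U_e$'' and may be treated as a scalar, and you then build a scalar-valued, skew-symmetric bicharacter $\beta_{g,h}=\tau_{g,h}C_{g,h}$ on all of $G\times G$. But $C_{g,h}$ is a homogeneous element of degree $ghg^{-1}h^{-1}$, so it is a multiple of $U_e$ only when $g$ and $h$ commute; for noncommuting pairs it is a (generally noncentral) multiple of $U_{[g,h]}$. Consequently $\beta$ is not a scalar function, the multiplicativity $\beta_{g_1g_2,h}=\beta_{g_1,h}\beta_{g_2,h}$ does not hold on all of $G$, and there is no ``character $\chi_{g,h}$ of $G$'' to which orthogonality over the whole group could be applied. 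The same problem infects your use of nondegeneracy: the commutation function only gives characters $h\mapsto\theta_{h,g}$ on the centralizer $C_G(g)$, and nondegeneracy only supplies a witness inside $C_G(g)$, so your step three has no nontrivial character of $G$ to work with. (A smaller slip: the diagonal of $M_G$ is $\theta_{g,g}U_e=\pm U_e$, not all ones; $\tau_{g,g}=\tau_{\psi(g),\psi(g)}=-1$ when $\psi(g)=-1$.) Your argument is essentially correct when $G$ is abelian, but the theorem is stated for arbitrary finite $G$.

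The paper's proof avoids any centrality claim. It computes $(M^2)_{a,c}=\sum_{b}M_{a,b}M_{b,c}$ directly with entries in $\FF^{\alpha}G$: the product $U_aU_bU_a^{-1}U_b^{-1}\cdot U_bU_cU_b^{-1}U_c^{-1}$ telescopes to $U_aU_b\left(U_a^{-1}U_c\right)U_b^{-1}U_c^{-1}$, and the sum over $b\in G$ is then split into left cosets of $N=C_G(a^{-1}c)$. Inside $N$, conjugation of $U_a^{-1}U_c$ (a multiple of $U_{a^{-1}c}$) by $U_h$ produces the scalar $\theta^{B}_{h,a^{-1}c}$, and the inner sum becomes the character sum $\sum_{h\in N}\theta^{A}_{h,a^{-1}c}$ over the centralizer, which vanishes for $a\neq c$ exactly because nondegeneracy makes that character of $N$ nontrivial, and equals $|G|$ when $a=c$. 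So the centralizer-coset decomposition, and keeping the matrix entries as algebra elements rather than scalars, are the missing ingredients your proposal would need to go through in the nonabelian setting.
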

As a consequence we extend \thmref{Main_Theorem}(2,3) for arbitrary
nondegenerate regular gradings (see \subref{The-commutation-matrix},
\corref{comm_matrix}). In case $\theta_{g,g}=1$ for every $g\in G$,
we have that the elements $U_{g}U_{h}U_{g}^{-1}U_{h}^{-1}\in F^{f}G\cong M_{r}(F)$
and so the commutation matrix may be viewed as a matrix in $M_{r^{3}}(F)$.
In that case we obtain the following corollary.
\begin{cor}
$\det(M_{G})=\pm r^{(r^{3})}$, where $|G|=r^{2}$.
\end{cor}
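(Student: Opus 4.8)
The plan is to reduce the corollary to the relation $M_{G}^{2}=|G|\cdot\mathrm{Id}$ of the preceding theorem, once $\det(M_{G})$ has been given a precise meaning. Under the standing hypothesis $\theta_{g,g}=1$ for all $g\in G$, nondegeneracy of the regular $G$-grading forces the representing $2$-cocycle $\alpha$ to be nondegenerate, so $\FF^{\alpha}G$ is a finite-dimensional simple $\FF$-algebra; since $\FF$ is algebraically closed, Artin--Wedderburn gives $\FF^{\alpha}G\cong M_{r}(\FF)$ with $r^{2}=\dim_{\FF}\FF^{\alpha}G=|G|$, which is why $|G|$ appears as $r^{2}$. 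Fix such an isomorphism $\varphi$. The matrix $M_{G}$ is an $r^{2}\times r^{2}$ array with entries $U_{g}U_{h}U_{g}^{-1}U_{h}^{-1}\in\FF^{\alpha}G$; applying $\varphi$ to each entry and then replacing each resulting $r\times r$ block in place yields a matrix $\widetilde{M}_{G}\in M_{r^{3}}(\FF)$, via the ring isomorphism $M_{r^{2}}\!\bigl(M_{r}(\FF)\bigr)\cong M_{r^{3}}(\FF)$, and we put $\det(M_{G}):=\det(\widetilde{M}_{G})$.

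Before computing I would verify that this number depends on no choices. The entries $U_{g}U_{h}U_{g}^{-1}U_{h}^{-1}$ are independent of the chosen homogeneous basis $\{U_{g}\}$ of $\FF^{\alpha}G$, as already noted in the construction of $M_{G}$. Any two isomorphisms $\FF^{\alpha}G\to M_{r}(\FF)$ differ by an inner automorphism of $M_{r}(\FF)$, by Skolem--Noether, that is, by conjugation by some $Q\in GL_{r}(\FF)$; performing this entrywise on $M_{G}$ conjugates $\widetilde{M}_{G}$ by the block-scalar matrix $\mathrm{diag}(Q,\dots,Q)\in GL_{r^{3}}(\FF)$, which leaves the determinant unchanged. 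Hence $\det(M_{G})$ is well defined.

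The rest is a one-line computation. Flattening, being a ring isomorphism, sends the identity of $M_{r^{2}}\!\bigl(\FF^{\alpha}G\bigr)$ to $I_{r^{3}}$, and turns the relation $M_{G}^{2}=|G|\cdot\mathrm{Id}$ into $\widetilde{M}_{G}^{\,2}=|G|\cdot I_{r^{3}}=r^{2}I_{r^{3}}$. Taking determinants, $\det(\widetilde{M}_{G})^{2}=(r^{2})^{r^{3}}=r^{2r^{3}}$, and therefore $\det(M_{G})=\pm r^{r^{3}}=\pm r^{(r^{3})}$.

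I expect the only genuine point of care to be the bookkeeping in the first two paragraphs: matching the $r^{2}\times r^{2}$ picture over $\FF^{\alpha}G\cong M_{r}(\FF)$ with the $M_{r^{3}}(\FF)$ picture and confirming that neither the Wedderburn isomorphism nor the basis $\{U_{g}\}$ affects the answer. Once $\det(M_{G})$ is unambiguously defined, the corollary is immediate from $M_{G}^{2}=|G|\cdot\mathrm{Id}$.
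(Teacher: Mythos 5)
Your proof is correct and follows essentially the paper's route: the key input is the theorem $M_{G}^{2}=\left|G\right|\cdot Id$ together with viewing $M_{G}$, via $\FF^{\alpha}G\cong M_{r}(\FF)$, as a matrix in $M_{r^{3}}(\FF)$, exactly as the paper does. Your only deviations are cosmetic improvements: you take determinants of both sides directly instead of passing through diagonalizability and the eigenvalue multiplicities $\alpha^{\pm}$ as the paper does for \thmref{Main_Theorem}(3), and you spell out (via Skolem--Noether and the basis-independence of $U_{g}U_{h}U_{g}^{-1}U_{h}^{-1}$) why $\det(M_{G})$ is well defined, a point the paper leaves implicit.
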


\section{\label{sec:preliminaries}Preliminaries, examples and some basic
results}

In the first part of this section we recall some general facts and
terminology on $G$-graded PI-theory which will be used in the proofs
of the main results (we refer the reader \cite{aljadeff_representability_2009}
for a detailed account on this topic). In the second part of this
section we present some additional examples of regular gradings on
finite and infinite dimensional algebras. Finally, we present properties
of regular gradings and prove \thmref{Envelope}.

\subsection{\label{sub:Graded-polynomial-identities}Graded polynomial identities}

Let $W$ be a $G$-graded PI-algebra over $\FF$ and $I=Id_{G}(W)$
be the ideal of $G$-graded identities of $W$. These are polynomials
in $\FF\langle X_{G}\rangle$, the free $G$-graded algebra over $\FF$
generated by $X_{G}$, that vanish upon any admissible evaluation
on $W$. Here $X_{G}=\bigcup_{g\in G}X_{g}$ and $X_{g}$ is a set
of countably many variables of degree $g$. An evaluation is admissible
if the variables from $X_{g}$ are replaced only by elements of $W_{g}$.
It is known that $I$ is a $G$-graded $T$-ideal, i.e. closed under
$G$-graded endomorphisms of $\FF\langle X_{G}\rangle$.

We recall from \cite{aljadeff_representability_2009} that the $T$-ideal
$I=Id_{G}(W)$ is generated by multilinear polynomials and so it does
not change when passing to $\bar{\FF}$, the algebraic closure of
$\FF$, in the sense that the ideal of identities of $W_{\bar{\FF}}$
over $\bar{\FF}$ is the span (over $\bar{\FF}$) of the $T$-ideal
of identities of $W$ over $\FF$.

\subsection{Additional examples of regular gradings}

We present here some more examples (in addition to the ones presented
in the introduction). The following example corresponds to the grading
determined by the symbol algebra $(1,1)_{n}$.
\begin{example}
\label{ex:matrix} Let $M_{n}(\FF)$ be the matrix algebra over the
field $\FF$, and let $G=\nicefrac{\ZZ}{n\ZZ}\times\nicefrac{\ZZ}{n\ZZ}$.
For $\zeta$ a primitive $n$-th root of $1$ we define {\footnotesize{
\[
X=diag(1,\zeta,...,\zeta^{n-1})=\left[\begin{array}{ccccc}
1 & 0 &  & \cdots & 0\\
0 & \zeta & 0 &  & \vdots\\
 & 0 & \zeta^{2} & \ddots\\
\vdots &  & \ddots & \ddots & 0\\
0 & \cdots &  & 0 & \zeta^{n-1}
\end{array}\right]\qquad Y=E_{n,1}+\sum_{1}^{n-1}E_{i,i+1}=\left[\begin{array}{ccccc}
0 & 1 & 0 & \cdots & 0\\
 & 0 & 1 & \ddots & \vdots\\
\vdots &  & \ddots & \ddots & 0\\
0 &  &  & 0 & 1\\
1 & 0 & \cdots & 0 & 0
\end{array}\right].
\]
}}Note that $\zeta XY=YX$. Furthermore, the set $\left\{ X^{i}Y^{j}\mid0\leq i,j\leq n-1\right\} $
is a basis of $M_{n}(\FF)$, and so we can define a $G$-grading on
$M_{n}(\FF)$ by $(M_{n}(\FF))_{(i,j)}=\FF X^{i}Y^{j}$. Let us check
the $G$-grading is regular. For any two basis elements we have that
\begin{eqnarray*}
(X^{i_{1}}Y^{j_{1}})(X^{i_{2}}Y^{j_{2}}) & = & \zeta^{i_{2}j_{1}}X^{i_{1}}X^{i_{2}}Y^{j_{1}}Y^{j_{2}}=\zeta^{i_{2}j_{1}}X^{i_{2}}X^{i_{1}}Y^{j_{2}}Y^{j_{1}}=\zeta^{i_{2}j_{1}-i_{1}j_{2}}\left(X^{i_{2}}Y^{j_{2}}\right)\left(X^{i_{1}}Y^{j_{1}}\right)\\
 & \Rightarrow & \theta_{(i_{1},j_{1})(i_{2},j_{2})}=\zeta^{i_{2}j_{1}-i_{1}j_{2}}
\end{eqnarray*}
and hence the second condition in the definition of a regular grading
is satisfied. The first condition in the definition follows at once
from the fact that the elements $X$ and $Y$ are invertible. Finally
we note that since $\zeta$ is a primitive $n$-th root of unity,
the regular grading is in fact minimal.
\end{example}

\begin{example}
For any $n\in\NN$ and $c\in\FF^{\times}$, we can define a regular
$\nicefrac{\ZZ}{n\ZZ}$-grading on $A=\nicefrac{\FF\left[x\right]}{\left\langle x^{n}-c\right\rangle }$
by setting $A_{k}=\FF\cdot x^{k}$. Clearly, the commutation function
here is given by $\theta_{h,g}=1$ for all $g,h\in\nicefrac{\ZZ}{n\ZZ}$.
\end{example}

\begin{example}
For any algebra $A$ we have the trivial $G=\left\{ e\right\} $-grading
by setting $A_{e}=A$. In this case the grading is regular if and
only if $A$ is abelian and nonnilpotent.
\end{example}

\begin{example}
\label{ex:dihedral} We present an algebra with a \textit{nondegenerate}
regular $G$-grading where $G$ is isomorphic to the dihedral group
of order $8$.

Consider the presentation $\langle x,y:x^{4}=y^{2}=e,yxy^{-1}=x^{3}\rangle$
of the group $G$.

It is well known that there is a (unique) nonsplit extension 
\[
\xymatrix{\alpha_{G}:1\ar[r] & \left\{ \pm1\right\} \ar[r] & Q_{16}\ar[r]^{\pi} & G\ar[r] & 1}
\]
where $Q_{16}=\left\langle u,v\;:\; u^{8}=v^{4}=e,\;,u^{4}=v^{2},\; vuv^{-1}=u^{3}\right\rangle $
is isomorphic to the quaternion group of order $16$. The map $\pi$
is determined by $\pi(u)=x$ and $\pi(v)=y$. Note that the extension
is nonsplit on any nontrivial subgroup of $G$, that is, if $\{e\}\neq H\leq G$,
then the restricted extension

\[
\xymatrix{\alpha_{H}:1\ar[r] & \left\{ \pm1\right\} \ar[r] & \pi^{-1}(H)\ar[r]^{\pi} & H\ar[r] & 1}
\]
is nonsplit. Consider the twisted group algebra $\FF^{\alpha_{G}}G$
where the values of the cocycle are viewed in $\FF^{\times}$. Clearly,
$\FF^{\alpha_{G}}G$ is $G/K=C_{2}$-graded where $K$ is the Klein
$4$-group $K=\{e,x^{2},y,x^{2}y\}$ and so we can consider the corresponding
Grassmann envelope $E(\FF^{\alpha_{G}}G)$. We show that $E(\FF^{\alpha_{G}}G)$
is regularly $G$-graded and moreover the grading is nondegenerate.
Clearly the natural $G$-grading on the twisted group algebra $\FF^{\alpha_{G}}G$
is regular and hence the corresponding $G$-grading on $E(\FF^{\alpha_{G}}G)$
is also regular. Let $\theta$ be the corresponding commutation function.
To see that the $G$-grading on $E(\FF^{\alpha_{G}}G)$ is nondegenerate,
note that since the cocycle $\alpha_{H}$ is nontrivial on every subgroup
$H\neq\{e\}$ of $G$, the group $\pi^{-1}(K)$ is isomorphic to the
quaternion group of order $8$ and hence the twisted group subalgebra
$\FF^{\alpha_{K}}K$ of $\FF^{\alpha_{G}}G$ is isomorphic to $M_{2}(\FF)$.
This shows that the nondegeneracy condition (see Definition \Defref{G-comm-function})
is satisfied by any nontrivial element of $K$. For elements $g$
in $G\setminus K$ we have that $\theta_{g,g}=-1$ and we are done.
We will return to this example at the end of the paper.
\end{example}

\subsubsection{The commutation function $\theta$ and the commutation matrix}

We now turn to study some properties of the commutation function $\theta$.
We start with some notation.

Let $G$ be a group and $\bar{g}=(g_{1},...,g_{n})\in G^{n}$.
\begin{itemize}
\item Denote by $Sym(\bar{g})$ the set $\left\{ \sigma\in Sym(n)\;\mid\; g_{1}\cdots g_{n}=g_{\sigma(1)}\cdots g_{\sigma(n)}\right\} $.
\item For any $\sigma\in Sym(n)$ we write $\bar{g}^{\sigma}=(g_{\sigma(1)},...,g_{\sigma(n)})$. 
\end{itemize}
The conditions in the following lemma correspond to the properties
of T-ideal, namely $(1)$-closed to multiplication $(2)$-closed to
substitution and $(3)$-closed to addition.
\begin{lem}
\label{lem:prop_comm_function}Let $G$ be a group and $A$ a regularly
$G$-graded algebra with commutation function $\theta$. Then $\theta$
satisfies the following conditions.
\begin{enumerate}
\item Let $\bar{g}=(g_{1},...,g_{n})\in G^{n}$, $1\leq i\leq j\leq n$
and $\sigma\in Sym(\bar{g})$ such that $\sigma(k)=k$ for all $k<i$
or $k>j$. Denote by $\tilde{\sigma}\in Sym(j-i+1)$ the restriction
of $\sigma$ to $\left\{ i,i+1,...,j\right\} $, then $\theta_{(\bar{g},\sigma)}=\theta_{\left((g_{i,}g_{i+1},...,g_{j}),\tilde{\sigma}\right)}$.
\item Let $ $$\bar{h}=(h_{1},...,h_{k})\in G^{k}$. Let $\bar{g}_{i}=(g_{i,1},...,g_{i,n_{i}})\in G^{n_{i}}$
such that $\prod_{j=1}^{n_{i}}g_{i,j}=h_{i}$ and set $\bar{g}=(\bar{g}_{1},...,\bar{g}_{k})\in G^{\sum_{1}^{k}n_{i}}$.
For each $\sigma\in Sym(\bar{h})$ let $\tilde{\sigma}\in Sym(n_{1}+\cdots+n_{k})$
be the permutation that moves the blocks of size $n_{1},...,n_{k}$
according to the permutation $\sigma$. Then $\theta_{(\bar{h},\sigma)}=\theta_{(\bar{g},\tilde{\sigma})}$.
\item For every $g_{1},...,g_{n}\in G$ and $\sigma,\tau\in Sym(n)$ such
that $g_{1}\cdots g_{n}=g_{\sigma(1)}\cdots g_{\sigma(n)}=g_{\sigma\tau(1)}\cdots g_{\sigma\tau(n)}$
we have 
\[
\theta_{\left((g_{1},...,g_{n}),\sigma\right)}\theta_{\left((g_{\sigma(1)},...,g_{\sigma(n)}),\tau\right)}=\theta_{\left(g_{1},...,g_{n},\sigma\tau\right)}.
\]

\end{enumerate}
\end{lem}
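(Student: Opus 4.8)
The plan is to verify each of the three conditions directly from the defining property of a regular grading, namely that for any tuple $\bar{g}=(g_1,\dots,g_n)$ and any $\sigma\in Sym(\bar{g})$ we have $a_{g_1}\cdots a_{g_n}=\theta_{(\bar{g},\sigma)}\,a_{g_{\sigma(1)}}\cdots a_{g_{\sigma(n)}}$ for \emph{all} homogeneous choices $a_{g_i}\in A_{g_i}$, together with the regularity condition which guarantees that at least one such product is nonzero (so that the scalar $\theta_{(\bar{g},\sigma)}$ is well defined and unique). The common mechanism in all three parts is the same: produce two expressions for one and the same product of homogeneous elements, one via a single application of $\theta$ and one via a composite/localized application, then invoke regularity to pick elements making the product nonzero and cancel.

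For part (1), I would start with a tuple $\bar{g}$ and a permutation $\sigma\in Sym(\bar{g})$ fixing every index outside $\{i,\dots,j\}$. Choosing homogeneous elements $a_{g_k}$ for all $k$, apply the commutation relation for $(\bar g,\sigma)$ to get $a_{g_1}\cdots a_{g_n}=\theta_{(\bar g,\sigma)}a_{g_{\sigma(1)}}\cdots a_{g_{\sigma(n)}}$. Since $\sigma$ fixes the indices $k<i$ and $k>j$, the factors outside the block $\{i,\dots,j\}$ occupy the same positions on both sides, so after using regularity to choose the outside factors and the block factors so that nothing vanishes, one cancels the common prefix $a_{g_1}\cdots a_{g_{i-1}}$ and suffix $a_{g_{j+1}}\cdots a_{g_n}$ (using that in a regularly graded algebra we may assume these partial products are nonzero and, more carefully, that we can realize nonzero products with prescribed homogeneous components — this is exactly what the regularity axiom buys us). What remains is $a_{g_i}\cdots a_{g_j}=\theta_{(\bar g,\sigma)}a_{g_{\tilde\sigma(i)}}\cdots a_{g_{\tilde\sigma(j)}}$, which by definition (applied to the shorter tuple $(g_i,\dots,g_j)$ and $\tilde\sigma$, noting $\tilde\sigma$ indeed lies in $Sym(g_i,\dots,g_j)$ because $\sigma\in Sym(\bar g)$ and the outside product is unchanged) forces $\theta_{(\bar g,\sigma)}=\theta_{((g_i,\dots,g_j),\tilde\sigma)}$.

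For part (2), the idea is to ``refine'' each $h_i$ into the block $\bar g_i$. Pick homogeneous $a_{i,1}\in A_{g_{i,1}},\dots,a_{i,n_i}\in A_{g_{i,n_i}}$ and set $b_i=a_{i,1}\cdots a_{i,n_i}\in A_{h_i}$. Apply the commutation relation for $(\bar h,\sigma)$ to the $b_i$'s: $b_1\cdots b_k=\theta_{(\bar h,\sigma)}b_{\sigma(1)}\cdots b_{\sigma(k)}$. Rewriting each side in terms of the $a_{i,j}$'s turns this into an identity between two orderings of the full tuple $\bar g$, and $\tilde\sigma$ (the block permutation) is precisely the permutation realizing the right-hand ordering; it belongs to $Sym(\bar g)$ because $\sigma\in Sym(\bar h)$. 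Applying the commutation relation for $(\bar g,\tilde\sigma)$ to the same elements and comparing — again after using regularity to ensure all the relevant products, in particular each $b_i$, are nonzero — gives $\theta_{(\bar h,\sigma)}=\theta_{(\bar g,\tilde\sigma)}$. For part (3), apply the commutation relation for $(\bar g,\sigma)$ to get $a_{g_1}\cdots a_{g_n}=\theta_{(\bar g,\sigma)}a_{g_{\sigma(1)}}\cdots a_{g_{\sigma(n)}}$, then apply it for $((g_{\sigma(1)},\dots,g_{\sigma(n)}),\tau)$ to the elements $a_{g_{\sigma(i)}}$, obtaining $a_{g_{\sigma(1)}}\cdots a_{g_{\sigma(n)}}=\theta_{((g_{\sigma(1)},\dots,g_{\sigma(n)}),\tau)}a_{g_{\sigma\tau(1)}}\cdots a_{g_{\sigma\tau(n)}}$; substituting and comparing with the single-step relation for $(\bar g,\sigma\tau)$ yields the cocycle-type identity after cancelling a nonzero product (guaranteed by regularity). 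The main obstacle I anticipate is purely bookkeeping rather than conceptual: in parts (1) and (2) one must be careful that regularity lets us choose homogeneous elements making not just the \emph{total} product nonzero but also the relevant \emph{partial} products (prefixes, suffixes, blocks $b_i$) nonzero simultaneously — strictly, one should first fix a nonzero realization of the long product, which automatically makes every contiguous sub-product nonzero in the expression at hand, and only then cancel; getting this ordering of quantifiers right is the one place the argument can go subtly wrong.
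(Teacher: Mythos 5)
Your overall plan---verify each condition directly from the defining relation of a regular grading and use uniqueness of the scalar on a nonzero product supplied by regularity---is the same as the paper's, and parts (2) and (3) are carried out correctly (part (3) is exactly the paper's argument, and part (2) correctly rests on $A_{g_{i,1}}\cdots A_{g_{i,n_i}}\subseteq A_{h_i}$ so that the relation for $\bar h$ applies to the block products $b_i$, with the nonvanishing of the $b_i$ extracted from a nonzero realization of the long product). The genuine problem is the central step of part (1): you ``cancel the common prefix $a_{g_1}\cdots a_{g_{i-1}}$ and suffix $a_{g_{j+1}}\cdots a_{g_n}$'' from the identity $a_{g_1}\cdots a_{g_n}=\theta_{(\bar g,\sigma)}a_{g_{\sigma(1)}}\cdots a_{g_{\sigma(n)}}$. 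Cancellation of ring elements is not available here: a regularly graded algebra is in general full of zero divisors (in the Grassmann algebra $E$, the model example, every generator satisfies $e_i^2=0$), and the prefix and suffix being nonzero---even every contiguous subproduct being nonzero, which is what your closing caveat arranges---does not allow you to strike them from both sides of an equation. So the intermediate claim ``what remains is $a_{g_i}\cdots a_{g_j}=\theta_{(\bar g,\sigma)}a_{g_{\tilde\sigma(i)}}\cdots a_{g_{\tilde\sigma(j)}}$'' is not justified as written.

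The repair is to run the comparison in the opposite direction, which is what the paper's one-line justification (``an immediate consequence of associativity'') amounts to. The defining relation applied to the \emph{short} tuple gives $a_{g_i}\cdots a_{g_j}=\theta_{((g_i,\dots,g_j),\tilde\sigma)}\,a_{g_{\tilde\sigma(i)}}\cdots a_{g_{\tilde\sigma(j)}}$ for \emph{all} homogeneous choices; multiplying this identity on the left by $a_{g_1}\cdots a_{g_{i-1}}$ and on the right by $a_{g_{j+1}}\cdots a_{g_n}$ (which is always legitimate) and using that $\sigma$ fixes the outside indices yields $a_{g_1}\cdots a_{g_n}=\theta_{((g_i,\dots,g_j),\tilde\sigma)}\,a_{g_{\sigma(1)}}\cdots a_{g_{\sigma(n)}}$. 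Comparing with $a_{g_1}\cdots a_{g_n}=\theta_{(\bar g,\sigma)}a_{g_{\sigma(1)}}\cdots a_{g_{\sigma(n)}}$ and choosing, by regularity, a tuple with $a_{g_1}\cdots a_{g_n}\neq0$ (hence $a_{g_{\sigma(1)}}\cdots a_{g_{\sigma(n)}}\neq0$, since it differs by a nonzero scalar), the two scalars must coincide. With this reordering part (1) is sound and the rest of your write-up stands essentially as the paper's proof.
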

\begin{proof}

\begin{enumerate}
\item This is an immediate consequence of the associativity of the product
in $A$.
\item The result follows from the fact that $A_{g_{i,1}}\cdots A_{g_{i,n_{i}}}\subseteq A_{g_{i,1}\cdots g_{i,n_{i}}}=A_{h_{i}}$.
\item Let $A=\bigoplus_{g\in G}A_{g}$ and choose some $a_{i}\in A_{g_{i}}$
such that $\prod a_{i}\neq0$. Then 
\begin{eqnarray*}
a_{1}\cdots a_{n} & = & \theta_{\left((g_{1},...,g_{n}),\sigma\right)}a_{\sigma(1)}\cdots a_{\sigma(n)}=\theta_{\left((g_{1},...,g_{n}),\sigma\right)}\theta_{\left((g_{\sigma(1)},...,g_{\sigma(n)}),\tau\right)}a_{\sigma\tau(1)}\cdots a_{\sigma\tau(n)}\\
a_{1}\cdots a_{n} & = & \theta_{\left(g_{1},...,g_{n},\sigma\tau\right)}a_{\sigma\tau(1)}\cdots a_{\sigma\tau(n)}.
\end{eqnarray*}
Finally, since $a_{1}\cdots a_{n}\neq0$, we have $\theta_{\left((g_{1},...,g_{n}),\sigma\right)}\theta_{\left((g_{\sigma(1)},...,g_{\sigma(n)}),\tau\right)}=\theta_{\left(g_{1},...,g_{n},\sigma\tau\right)}$
as desired. 
\end{enumerate}
\end{proof}
Next, we define $G$-commutation functions. We remind the reader that
if $g,h\in G$ commute we may denote by $\theta_{g,h}$ the scalar
$\theta_{((g,h),(1,2))}$.
\begin{defn}
\label{Def:G-comm-function}Let $\theta$ be a function from the pairs
$\bar{g}=(g_{1},...,g_{n})\in G^{n},\;\sigma\in Sym(\bar{g})$ with
values in $\FF^{\times}$. We say that $\theta$ is a $G$-commutation
function if it satisfies conditions (1, 2, 3) from the last lemma.
The function $\theta$ is said to be nondegenerate if for any $e\neq g\in G$
there is some $h\in C_{G}(g)$ such that $\theta_{g,h}\neq1$.
\end{defn}
In \lemref{construct_algebra} below we show that each $G$-commutation
function is in fact the commutation function of some regularly $G$-graded
algebra. By the definition, we get that a regular grading is nondegenerate
if and only if the commutation function is nondegenerate.
\begin{lem}
Let $\theta$ be a $G$-commutation function. Then the following hold.
\begin{enumerate}
\item For every $\bar{g}\in G^{n}$ we have $\theta_{(\bar{g},e)}\theta_{(\bar{g},e)}=\theta_{(\bar{g},e)}$
and so $\theta_{(\bar{g},e)}=1$.
\item For every commuting pair $g,h\in G$, we have $\theta_{g,h}=\theta_{h,g}^{-1}$.
\item For any fixed $g\in G$, the functions $h\mapsto\theta_{g,h}$ and
$h\mapsto\theta_{h,g}$ are characters on $C_{G}(g)$.
\end{enumerate}
\end{lem}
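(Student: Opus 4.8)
The plan is to deduce all three statements directly from conditions (1), (2), (3) of the preceding lemma, treating them as the axioms of a $G$-commutation function; essentially no new idea is needed beyond choosing the right permutations and keeping the bookkeeping straight.

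For (1), I would apply condition (3) with $\sigma=\tau=e$ (the identity of $Sym(n)$): the hypotheses of (3) are trivially satisfied, and since $e\cdot e=e$ it gives $\theta_{(\bar g,e)}\,\theta_{(\bar g,e)}=\theta_{(\bar g,e)}$, whence $\theta_{(\bar g,e)}=1$ because $\theta$ is $\FF^{\times}$-valued. For (2), given commuting $g,h$, I would take $\bar g=(g,h)$ and $\sigma=\tau=(1\,2)$; the transposition lies in $Sym(\bar g)$ exactly because $gh=hg$, and $(1\,2)\circ(1\,2)=e$, so condition (3) gives $\theta_{((g,h),(1\,2))}\,\theta_{((h,g),(1\,2))}=\theta_{((g,h),e)}$, which is $1$ by part (1). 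Unwinding the notation for commuting pairs, this reads $\theta_{g,h}\theta_{h,g}=1$.

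For (3), fix $g$ and pick $h_{1},h_{2}\in C_{G}(g)$, so that $h_{1}h_{2}\in C_{G}(g)$ as well and $g$ commutes with $h_{1}h_{2}$. First I would use condition (2) with the single refinement that splits the block $h_{1}h_{2}$ into $(h_{1},h_{2})$: taking $\bar h=(g,\,h_{1}h_{2})$, blocks of sizes $1$ and $2$, and $\sigma=(1\,2)\in Sym(\bar h)$ the block swap, condition (2) identifies $\theta_{g,h_{1}h_{2}}=\theta_{((g,h_{1}h_{2}),(1\,2))}$ with $\theta_{((g,h_{1},h_{2}),\widetilde\sigma)}$, where $\widetilde\sigma$ is the induced permutation of three letters sending $(g,h_{1},h_{2})$ to $(h_{1},h_{2},g)$, i.e. the $3$-cycle $(1\,2\,3)$. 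Next I would factor this $3$-cycle as $(1\,2)\circ(2\,3)$ and apply condition (3), obtaining $\theta_{((g,h_{1},h_{2}),(1\,2\,3))}=\theta_{((g,h_{1},h_{2}),(1\,2))}\cdot\theta_{((h_{1},g,h_{2}),(2\,3))}$; each factor is the commutation scalar of a transposition fixing one of the end coordinates, so condition (1) collapses the first to $\theta_{((g,h_{1}),(1\,2))}=\theta_{g,h_{1}}$ and the second to $\theta_{((g,h_{2}),(1\,2))}=\theta_{g,h_{2}}$. Chaining these equalities gives $\theta_{g,h_{1}h_{2}}=\theta_{g,h_{1}}\theta_{g,h_{2}}$, so $h\mapsto\theta_{g,h}$ is a homomorphism $C_{G}(g)\to\FF^{\times}$, i.e. a character, and $h\mapsto\theta_{h,g}$ is its pointwise inverse by part (2), hence also a character.

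The part I expect to need the most care is the permutation bookkeeping in step (3): getting the direction of the block-permutation $\widetilde\sigma$ in condition (2) right, and pinning down whether ``$\sigma\tau$'' in condition (3) means $\sigma\circ\tau$ or $\tau\circ\sigma$, so that the products $g_{1}\cdots g_{n}$ really agree along the way. Rather than trusting cycle notation, I would re-derive the two needed permutation identities directly from the defining picture $a_{1}\cdots a_{n}=\theta\cdot a_{\sigma(1)}\cdots a_{\sigma(n)}$ --- i.e. literally move $a_{g}$ past $a_{h_{1}}$ and then past $a_{h_{2}}$ --- which also makes transparent why the answer ought to be $\theta_{g,h_{1}}\theta_{g,h_{2}}$.
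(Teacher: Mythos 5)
Your proposal is correct and follows essentially the same route as the paper: part (1) from condition (3) with $\sigma=\tau=id$, part (2) from condition (3) with $\sigma=\tau=(1\,2)$, and part (3) by refining the block $h_{1}h_{2}$ via condition (2), factoring the resulting $3$-cycle into two transpositions via condition (3), and collapsing each via condition (1) — exactly the chain $\theta_{g,h_{1}h_{2}}=\theta_{((g,h_{1},h_{2}),(1,2))}\theta_{((h_{1},g,h_{2}),(2,3))}=\theta_{g,h_{1}}\theta_{g,h_{2}}$ in the paper (the apparent difference between your $(1\,2\,3)$ and the paper's $(1,3,2)$ is just the composition/action convention you flagged). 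Your only deviation — deducing that $h\mapsto\theta_{h,g}$ is a character as the pointwise inverse of $h\mapsto\theta_{g,h}$ via part (2), where the paper says "similarly" — is an equally valid triviality.
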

\begin{proof}

\begin{enumerate}
\item Part $(1)$ follows from part $(3)$ of the last lemma where $\sigma=\tau=id\in Sym(n)$. 
\end{enumerate}

Let $\sigma=(1,2)\in Sym(2)$. 
\begin{enumerate}
\item [2.]If $g,h$ commute, then $\theta_{g,h}\theta_{h,g}=\theta_{((g,h),\sigma)}\theta_{((h,g),\sigma)}=\theta_{((g,h),\sigma^{2})}=\theta_{((g,h),id)}=~1$.
\item [3.]By the conditions in \lemref{prop_comm_function} we get that
if $g\in G$ and $h_{1},h_{2}\in C_{G}(g)$, then 
\[
\theta_{g,h_{1}h_{2}}=\theta_{((g,h_{1}h_{2}),\sigma)}=\theta_{((g,h_{1},h_{2}),(1,3,2))}=\theta_{((g,h_{1},h_{2}),(1,2))}\theta_{((h_{1},g,h_{2}),(2,3))}=\theta_{g,h_{1}}\theta_{g,h_{2}}.
\]
Similarly we have that $\theta_{h_{1}h_{2},g}=\theta_{h_{1},g}\theta_{h_{2},g}$.
\end{enumerate}
\end{proof}
\begin{rem}
\label{rem:e_blocks}Notice in particular that $\theta_{e,g}=\theta_{g,e}=1$
for all $g\in G$. Using conditions (1) and (2) in \lemref{prop_comm_function}
we see that if $\sigma\in Sym(n)$ is a permutation which moves rigidly
in $\bar{g}=(g_{1},...,g_{n})\in G^{n}$ a block $\left(g_{i},g_{i+1},...,g_{j}\right)$
with $g_{i}\cdot g_{i+1}\cdots g_{j}=e$, then $\theta_{(\bar{g},\sigma)}=1$.
\end{rem}
If $G$ is abelian, then the commutation function $\theta_{(\bar{g},\sigma)}$
is defined by its values on pairs $\theta_{g,h}$. In that case we
get that $C_{G}(g)=G$ for all $g\in G$ and the conditions in \lemref{prop_comm_function}
follow from those in the last lemma. We recall the definition of such
functions.
\begin{defn}[Bicharacter]
\label{Def:bicharacter}Let $\eta:G\times G\rightarrow\FF^{\times}$
be a map where $G$ is a group and $\FF^{\times}$ is the group of
units of the field $\FF$. We say that the map $\eta$ is a bicharacter
of $G$ if for any $g_{0},h_{0}\in G$ the maps $h\mapsto\eta(g_{0},h)$
and $g\mapsto\eta(g,h_{0})$ are characters (i.e group homomorphisms
$G\rightarrow\FF^{\times}$). A bicharacter of $G$ is called \textit{skew-symmetric}
if $\eta(g,h)=\eta(h,g)^{-1}$ for any $h,g\in G$. A bicharacter
is said to be \textit{nondegenerate} if for any $e\neq g\in G$ there
is an element $h\in G$ such that $\theta(g,h)\neq1$. \end{defn}
\begin{rem}
In general, if $\theta$ is a commutation function on a finite group
$G$, then for any commuting elements $g,h\in G$ we have $ord(\;\theta(g,h)\;)\mid\; gcd(\; ord(g),\; ord(h)\;)$,
so $\theta(g,h)$ is contained in the group of roots of unity of order
$\left|G\right|$ in $\FF^{\times}$. In fact, as it will be shown
below, this holds for any $\theta_{(\bar{g},\sigma)}$. Also, we have
that $\theta(g,g)=\theta(g,g)^{-1}$ so $\theta(g,g)\in\left\{ \pm1\right\} $
for every $g\in G$.
\end{rem}
We present now two lemmas which summarize properties of the commutation
function and the ``$G$-envelope operation''. The proof of the first
lemma follows directly from the definitions and is left to the reader.
\begin{lem}
\label{lem:Oper_on_reg_alg}Suppose that $A,B$ are $G,H$-regulary
graded algebra with commutation functions $\theta$ and $\eta$ respectively.
Then the following hold.
\begin{enumerate}
\item $A\otimes B$ is a regularly $G\times H$-graded algebra with $(A\otimes B)_{(g,h)}=A_{g}\otimes B_{h}$
and $\left(\theta\otimes\eta\right)_{\left(((g_{1},h_{1}),...,(g_{n},h_{n}),\sigma\right)}=\theta_{((g_{1},...,g_{n}),\sigma)}\theta_{((h_{1},...,h_{n}),\sigma)}$
for all $\sigma\in Sym(\bar{g})\cap Sym(\bar{h})$.
\item If $G=H$ and $\theta=\eta$, then the algebra $A\oplus B$ is regularly
$G$-graded where $(A\oplus B)_{g}=A_{g}\oplus B_{g}$. Furthermore,
the corresponding commutation function is $\theta$.\\
In particular $\bigoplus_{1}^{n}A$ is regularly $G$-graded for any
$n\in\NN$.
\item If $N\leq G$ is a subgroup, then $A_{N}=\bigoplus_{g\in N}A_{g}$
is a regularly $N$-graded algebra with commutation function $\theta\mid_{N}$
-- the restriction to tuples in $N$.
\item If $G=H$ then $A\dotimes B$ is a regularly $G$-graded algebra with
commutation function $(\theta\dotimes\eta)_{(\bar{g},\sigma)}=\theta_{(\bar{g},\sigma)}\eta_{(\bar{g},\sigma)}$.
\end{enumerate}

If the groups $G,H$ are abelian, then the commutation matrix which
corresponds to the cases considered in the lemma are calculated as
follows: (1) $M^{A\otimes B}=M^{A}\otimes M^{B}$, (2) $M^{A\oplus B}=M^{A}$,
(3) $M^{A_{N}}$ is the restriction of $M^{A}$ to the group $N$,
and (4) $M^{A\dotimes B}$ is the Schur product (entry wise multiplication)
of $M^{A}$ and $M^{B}$.

\end{lem}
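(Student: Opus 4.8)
The plan is to handle the four items by a common template: exhibit the claimed homogeneous decomposition, check it is a grading using $A_gA_{g'}\subseteq A_{gg'}$ (and the analogue for $B$), verify the regularity axiom by lifting nonzero products from $A$ (and $B$) through the componentwise multiplication rule of the ambient algebra, and finally read off the commutation function from a single computation. The multiplication rules to be used are $(a\otimes b)(a'\otimes b')=aa'\otimes bb'$ in $A\otimes B$ and in $A\dotimes B$, $(a,b)(a',b')=(aa',bb')$ in the direct sum $A\oplus B$, and plain multiplication inside $A$ for the subalgebra $A_N$.

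For item (1) the decomposition $(A\otimes B)_{(g,h)}=A_g\otimes B_h$ is immediately a $G\times H$-grading. Given a tuple $((g_1,h_1),\dots,(g_n,h_n))$, I would pick $a_i\in A_{g_i}$ with $\prod a_i\neq0$ and $b_i\in B_{h_i}$ with $\prod b_i\neq0$ by regularity of $A$ and of $B$; then $\prod(a_i\otimes b_i)=(\prod a_i)\otimes(\prod b_i)\neq0$. The key observation is that the stabilizer $Sym((g_1,h_1),\dots,(g_n,h_n))$ equals $Sym(\bar g)\cap Sym(\bar h)$, so for such $\sigma$ both $\theta_{(\bar g,\sigma)}$ and $\eta_{(\bar h,\sigma)}$ are defined; expanding $\prod(a_i\otimes b_i)$ in two ways gives $\prod(a_i\otimes b_i)=\theta_{(\bar g,\sigma)}\eta_{(\bar h,\sigma)}\prod(a_{\sigma(i)}\otimes b_{\sigma(i)})$, and since the product is nonzero the scalar is forced, yielding the asserted formula for $\theta\otimes\eta$. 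Item (4) is the diagonal case of the same computation ($G=H$, all second coordinates equal to the first), so its commutation function is $\theta\dotimes\eta$ with $(\theta\dotimes\eta)_{(\bar g,\sigma)}=\theta_{(\bar g,\sigma)}\eta_{(\bar g,\sigma)}$; item (3) is immediate, since $A_N$ is a subalgebra because $N$ is a subgroup, it inherits the grading, the required nonzero products are a special case of those in $A$, and the same scalars $\theta_{(\bar g,\sigma)}$ work, so the commutation function is $\theta\mid_N$; item (2) is the analogous componentwise argument in $A\oplus B$, and the special case $\bigoplus_1^n A$ follows from it by induction (taking $B=\bigoplus_1^{n-1}A$ and using that $\bar g$ is realized nonzero already in the first summand). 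For the matrix statements in the abelian case I would specialize the commutation-function formulas to pairs, writing $\theta_{g,h}=(M^A)_{g,h}$: this turns (1)--(4) into $(M^{A\otimes B})_{(g,h),(g',h')}=(M^A)_{g,g'}(M^B)_{h,h'}$ (the Kronecker product), $M^{A\oplus B}=M^A$, the principal submatrix of $M^A$ on rows and columns indexed by $N$, and $(M^{A\dotimes B})_{g,g'}=(M^A)_{g,g'}(M^B)_{g,g'}$ (the Schur product), respectively.

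I do not anticipate a genuine obstacle here: the content is bookkeeping, consistent with the author's remark that the proof follows directly from the definitions. The two points worth stating carefully are the identification of the stabilizer of the product tuple in (1), so that $\theta_{(\bar g,\sigma)}$ and $\eta_{(\bar h,\sigma)}$ are simultaneously defined, and the precise role of the hypothesis $\theta=\eta$ in (2): for $c_i=(a_i,b_i)\in A_{g_i}\oplus B_{g_i}$ and $\sigma\in Sym(\bar g)$ one needs a single scalar $c$ with $\prod c_i=c\prod c_{\sigma(i)}$, and comparing components forces $c=\theta_{(\bar g,\sigma)}=\eta_{(\bar g,\sigma)}$, so without the assumption $\theta=\eta$ the algebra $A\oplus B$ need not be regularly graded at all.
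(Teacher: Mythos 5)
Your verification is correct and is exactly the direct-from-definitions argument the paper intends (the authors leave this proof to the reader); the two points you flag, the identification of the stabilizer with $Sym(\bar g)\cap Sym(\bar h)$ in (1) and the necessity of $\theta=\eta$ in (2), are indeed the only places where care is needed, and you even silently correct the typo in the stated formula, where the second factor should be $\eta_{((h_{1},...,h_{n}),\sigma)}$.
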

In the nonabelian case we have a similar connection between the commutation
matrices, though the ring over which the matrices are defined may
differ. More details are presented in the end of \subref{The-commutation-matrix}.\\

Suppose $A$ is regularly $G$-graded and let $\theta$ be the corresponding
commutation function. Given a multilinear polynomial $f(x_{g_{1},1},...,x_{g_{n},n})=\sum_{\sigma\in Sym(\bar{g})}\lambda_{\sigma}\prod x_{g_{\sigma(i)},\sigma(i)}$,
we denote by $f^{\theta}$ the polynomial 
\[
f^{\theta}(x_{g_{1},1},...,x_{g_{n},n})={\displaystyle \sum_{\sigma\in Sym(\bar{g})}}\lambda_{\sigma}\theta(\bar{g},\sigma)^{-1}\prod_{i}x_{g_{\sigma(i)},\sigma(i)}.
\]

\begin{lem}
\label{lem:envelope} Let $A$ be a regularly $G$-graded algebra
with commutation function $\theta$ and let $B$ be any $G$-graded
algebra. Let $f(x_{g_{1},1},...,x_{g_{n},n})=\sum_{\sigma\in Sym(\bar{g})}\lambda_{\sigma}\prod x_{g_{\sigma(i)},\sigma(i)}$.
Then $f^{\theta}\in Id_{G}(B)$ if and only if $f\in Id_{G}(A\dotimes B)$.\end{lem}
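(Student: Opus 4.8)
The plan is to evaluate a general $G$-graded substitution into $f$ on the envelope $A\dotimes B$ and rewrite it so that the dependence on $A$ is entirely captured by the scalars $\theta(\bar g,\sigma)$, leaving behind a substitution into $f^\theta$ on $B$. First I would fix homogeneous elements $a_i\in A_{g_i}$ and $b_i\in B_{g_i}$ (so $a_i\otimes b_i\in (A\dotimes B)_{g_i}$), and compute
\[
f(a_1\otimes b_1,\dots,a_n\otimes b_n)=\sum_{\sigma\in Sym(\bar g)}\lambda_\sigma\prod_i\bigl(a_{g_{\sigma(i)}}\otimes b_{g_{\sigma(i)}}\bigr)
=\sum_{\sigma\in Sym(\bar g)}\lambda_\sigma\Bigl(\prod_i a_{\sigma(i)}\Bigr)\otimes\Bigl(\prod_i b_{\sigma(i)}\Bigr).
\]
Now the key move: by the regularity condition on $A$ we may choose the $a_i$ so that $\prod_i a_i\neq0$, and then $\prod_i a_{\sigma(i)}=\theta(\bar g,\sigma)^{-1}\prod_i a_i$ for every $\sigma\in Sym(\bar g)$, by the defining property of the commutation function. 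Substituting this in and factoring out the common tensor factor $\prod_i a_i$, we get
\[
f(a_1\otimes b_1,\dots,a_n\otimes b_n)=\Bigl(\prod_i a_i\Bigr)\otimes\Bigl(\sum_{\sigma\in Sym(\bar g)}\lambda_\sigma\,\theta(\bar g,\sigma)^{-1}\prod_i b_{\sigma(i)}\Bigr)=\Bigl(\prod_i a_i\Bigr)\otimes f^\theta(b_1,\dots,b_n).
\]

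From this identity both directions follow. If $f^\theta\in Id_G(B)$, the right-hand side vanishes for every choice of the $b_i$ and every choice of the $a_i$; since homogeneous tensors $a\otimes b$ span $(A\dotimes B)_{g}$ and $f$ is multilinear, $f$ vanishes on all admissible evaluations of $A\dotimes B$, i.e. $f\in Id_G(A\dotimes B)$. Conversely, if $f\in Id_G(A\dotimes B)$, pick for the given tuple $\bar g$ a fixed tuple $(a_1,\dots,a_n)$ with $\prod_i a_i\neq0$ (possible by regularity); then for arbitrary $b_i\in B_{g_i}$ the left-hand side is $0$, so $\bigl(\prod_i a_i\bigr)\otimes f^\theta(\bar b)=0$ in $A\otimes B$, and since $\prod_i a_i\neq0$ this forces $f^\theta(\bar b)=0$. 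As $\bar b$ was arbitrary, $f^\theta\in Id_G(B)$.

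The only genuine subtlety — and the step I would be most careful about — is the passage between "vanishes on all homogeneous tensor substitutions" and "vanishes on all admissible graded substitutions into $A\dotimes B$": a general element of $(A\dotimes B)_g = A_g\otimes B_g$ is a sum of simple tensors, not a simple tensor, so one must invoke multilinearity of $f$ (and of $f^\theta$) to reduce to the simple-tensor case. This is routine but should be stated explicitly. Everything else is a direct computation using associativity and the commutation relation, together with the nonvanishing guaranteed by the regularity axiom; in particular one does not need $B$ to be regular, nor any hypothesis relating $B$ to $A$ beyond their common grading group $G$.
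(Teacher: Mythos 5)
Your argument is correct and is essentially the paper's own proof: evaluate $f$ on simple homogeneous tensors, use the commutation relation to pull out $\prod_i a_i$ and obtain $f(a_1\otimes b_1,\dots,a_n\otimes b_n)=\bigl(\prod_i a_i\bigr)\otimes f^{\theta}(b_1,\dots,b_n)$, then conclude one direction by multilinearity and the other by a regular choice with $\prod_i a_i\neq 0$. One small clarification: the relation $\prod_i a_{\sigma(i)}=\theta(\bar{g},\sigma)^{-1}\prod_i a_i$ follows from the commutation condition alone and holds for \emph{all} homogeneous $a_i$ (not only those with nonzero product), so the factorization is valid for every simple-tensor evaluation exactly as you later use it, with regularity needed only to cancel $\prod_i a_i$ in the converse direction.
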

\begin{proof}
By multilinearity of $f$ we only check that $f$ vanishes on a spanning
set. For any $a_{i}\in A_{g_{i}}$ and $b_{i}\in B_{g_{i}}$ we get
that 
\begin{eqnarray*}
f(a_{1}\otimes b_{1},...,a_{n}\otimes b_{n}) & = & {\displaystyle \sum_{\sigma\in Sym(\bar{g})}}\lambda_{\sigma}\prod(a_{\sigma(i)}\otimes b_{\sigma(i)})={\displaystyle \sum_{\sigma\in Sym(\bar{g})}}\lambda_{\sigma}\prod_{i}a_{\sigma(i)}\otimes\prod_{i}b_{\sigma(i)}\\
 & = & \prod_{i}a_{i}\otimes{\displaystyle \sum_{\sigma\in Sym(\bar{g})}}\lambda_{\sigma}\theta(\bar{g},\sigma)^{-1}\prod_{i}b_{\sigma(i)}=\prod a_{i}\otimes f^{\theta}(b_{1},...,b_{n}).
\end{eqnarray*}
If $f^{\theta}\in Id_{G}(B)$, then the last term is zero so $f\in Id_{G}(A\dotimes B)$.
On the other hand, if $f\in Id_{G}(A\dotimes B)$, then the first
term is always zero. Since the grading on $A$ is regular, we can
find $a_{i}$ such that $\prod a_{i}\neq0$, so $\prod a_{i}\otimes f^{\theta}(b_{1},...,b_{n})=0$
if and only if $f^{\theta}(b_{1},...,b_{n})=0$ and we get that $f\in Id_{G}(B)$. 
\end{proof}
Before we proceed with the proof of \thmref{Envelope} we recall that
for any $G$-graded algebra over a field of characteristic zero $\FF$,
the $T$-ideal of $G$-graded identities is generated by multilinear
polynomials which are strongly homogeneous, namely polynomials of
the form

\[
f(x_{g_{1},1},...,x_{g_{n},n})=\sum_{\sigma\in Sym(\bar{g})}\lambda_{\sigma}\prod x_{g_{\sigma(i)},\sigma(i)}.
\]

\begin{proof}
of \thmref{Envelope}
\begin{enumerate}
\item This is immediate since $f^{\theta}=f$ when $\theta\equiv1$.
\item $\tilde{A}\dotimes A$ is the product of $\left|G\right|$ copies
of $A$. This is a regularly $G$-graded algebra with commutation
function $\theta^{\left|G\right|}\equiv1$. We now use the associativity
of the envelope operation and part (1) to conclude that $Id_{G}(\tilde{A}\dotimes(A\dotimes B))=Id_{G}(B)$. 
\item This follows immediately from the previous lemma.
\end{enumerate}
\end{proof}

\section{\label{sec:Main-Theorem}Main Theorem}

Our main objective in this section is to prove \thmref{Main_Theorem}.
The first step is to translate the definition of ``regular grading''
into the language of graded polynomial identities.
\begin{lem}
\label{lem:In_PI_words}Let $A$ be an algebra over $\FF$, $G$ a
finite group and $A=\bigoplus_{g\in G}A_{g}$ a $G$-grading on $A$.
Then the grading is regular if and only if the following conditions
hold.
\begin{enumerate}
\item There are no monomials with distinct indeterminates in $Id_{G}(A)$.
\item There is a function $\theta$ from pairs $\left(\bar{g},\sigma\right)$,
where $\bar{g}\in G^{n}$ and $\sigma\in Sym(\bar{g})$, such that
$x_{g_{1},1}\cdots x_{g_{n},n}-\theta_{(\bar{g},\sigma)}x_{g_{\sigma(1)},\sigma(1)}\cdots x_{g_{\sigma(n)},\sigma(n)}\in Id_{G}(A)$
(binomial identity).
\end{enumerate}
\end{lem}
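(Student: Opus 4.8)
The plan is to prove the equivalence in \lemref{In_PI_words} by unwinding both directions in terms of graded evaluations. Recall that a graded polynomial lies in $Id_G(A)$ precisely when it vanishes under every admissible evaluation, i.e.\ every substitution $x_{g,i}\mapsto a_{g,i}\in A_{g}$, and that since $\mathrm{char}\,\FF=0$ the $T$-ideal $Id_G(A)$ is generated by multilinear strongly homogeneous polynomials, so it suffices to test the two conditions against such polynomials.

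For the forward direction, assume the $G$-grading on $A$ is regular with commutation function $\theta$. Condition (1): a monomial $x_{g_1,1}\cdots x_{g_n,n}$ with distinct indeterminates lies in $Id_G(A)$ iff $a_{g_1}\cdots a_{g_n}=0$ for all $a_{g_i}\in A_{g_i}$; but the regularity axiom asserts precisely that there is a choice with $a_{g_1}\cdots a_{g_n}\neq 0$, so no such monomial can be an identity. Condition (2): take the very function $\theta$ supplied by regularity. For any $\bar g\in G^n$, any $\sigma\in Sym(\bar g)$, and any admissible evaluation $x_{g_i,i}\mapsto a_{g_i}\in A_{g_i}$, the commutation axiom gives $a_{g_1}\cdots a_{g_n}=\theta_{(\bar g,\sigma)}\,a_{g_{\sigma(1)}}\cdots a_{g_{\sigma(n)}}$, so the binomial $x_{g_1,1}\cdots x_{g_n,n}-\theta_{(\bar g,\sigma)}x_{g_{\sigma(1)},\sigma(1)}\cdots x_{g_{\sigma(n)},\sigma(n)}$ vanishes identically on $A$, i.e.\ lies in $Id_G(A)$.

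For the converse, assume (1) and (2) hold, and let $\theta$ be the function furnished by (2); I will verify the two axioms of a regular grading. The commutation axiom is immediate: substituting arbitrary homogeneous elements $a_{g_i}\in A_{g_i}$ into the binomial identity from (2) yields $a_{g_1}\cdots a_{g_n}=\theta_{(\bar g,\sigma)}\,a_{g_{\sigma(1)}}\cdots a_{g_{\sigma(n)}}$. For the regularity axiom, fix $\bar g=(g_1,\dots,g_n)$ and suppose toward a contradiction that $a_{g_1}\cdots a_{g_n}=0$ for every choice of homogeneous $a_{g_i}\in A_{g_i}$; then the monomial $x_{g_1,1}\cdots x_{g_n,n}$ (on $n$ distinct variables) is a graded identity of $A$, contradicting (1). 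Hence some product is nonzero, as required. One should also check that $\theta$ takes values in $\FF^{\times}$ — this is forced, since if $\theta_{(\bar g,\sigma)}=0$ for some admissible $\bar g,\sigma$, the binomial identity would read $x_{g_1,1}\cdots x_{g_n,n}\in Id_G(A)$, again contradicting (1); and a nonzero scalar in a field is invertible.

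The only genuinely delicate point — and the step I would be most careful about — is the reduction to multilinear strongly homogeneous polynomials when extracting conditions (1) and (2) from the hypothesis ``the grading is regular,'' and conversely making sure conditions (1)–(2), phrased for such polynomials, are strong enough to recover the axioms for \emph{arbitrary} $n$-tuples (not just multilinear evaluations). Here one uses that the regularity and commutation axioms are themselves statements about products of $n$ homogeneous elements, which is exactly what a multilinear monomial evaluates to; no Vandermonde-type linearization beyond the standard multilinearization is needed, so this obstacle is mild. Everything else is a direct translation between ``vanishes on all admissible evaluations'' and ``is a graded identity.''
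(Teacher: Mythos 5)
Your proposal is correct and is essentially the paper's own argument: the paper simply notes that condition (1) is equivalent to the regularity axiom and condition (2) to the commutation axiom, which is exactly the direct translation you carry out (your extra remarks about multilinear reduction and about $\theta$ being nonzero are harmless but not needed, since both conditions already concern specific monomials and binomials with distinct variables).
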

\begin{proof}
The proof is clear. Indeed, condition (1) (resp. (2)) of the lemma
is equivalent to the first (resp. second) condition in the definition
of a regular grading.
\end{proof}
As mentioned above, the conditions in \lemref{prop_comm_function}
correspond to the properties of the T-ideal $Id_{G}(A)$, where $(1),(2),(3)$
correspond to closure under multiplication, closure under endomorphisms
and closure under addition respectively. Here is the precise statement.
\begin{prop}
\label{prop:relative_free_theta} Let $\theta$ be a commutation function
on a finite group $G$. Let $\FF\left\langle X_{G}\right\rangle $
be the graded free algebra over $\FF$ on the set $X_{G}$, where
$X_{G}=\{x_{g,i}:g\in G,i\in\mathbb{N}\}$ is a set of noncommuting
variables. For $\bar{g}\in G^{n}$, $\sigma\in Sym(\bar{g})$ and
$\bar{i}\in\NN^{n}$ we write $s(\bar{g},\sigma,\bar{i})=x_{g_{1},i_{1}}\cdots x_{g_{n},i_{n}}-\theta(\bar{g},\sigma)x_{g_{\sigma(1)},i_{\sigma(1)}}\cdots x_{g_{\sigma(n)},i_{\sigma(n)}}$.
Finally, we let $I$ be the $\FF$-subspace spanned by 
\[
S=\left\{ s(\bar{g},\sigma,\bar{i})\;\mid\;\bar{g}=(g_{1},...,g_{n})\in G^{n},\;\sigma\in Sym(\bar{g}),\; i_{1},...,i_{n}\in\NN\right\} .
\]
Then the following hold.
\begin{enumerate}
\item The vector space $I$ is a $T$-ideal.
\item The $G$-grading on $\nicefrac{\FF\left\langle X_{G}\right\rangle }{I}$
is regular with commutation function $\theta$.
\end{enumerate}
In particular, any $G$-commutation function is a commutation function
of some regular algebra.\end{prop}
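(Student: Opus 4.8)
The plan is to verify the two assertions directly from the definitions, using \lemref{prop_comm_function} to check that the relations we are quotienting by are preserved under the relevant operations. First I would show that $I$ is a $T$-ideal. Since $I$ is by definition an $\FF$-subspace, it remains to check (a) that $I$ is a two-sided ideal and (b) that $I$ is closed under graded endomorphisms of $\FF\langle X_G\rangle$. For (a), it suffices to show that for any monomial $m=x_{h_1,j_1}\cdots x_{h_k,j_k}$ we have $m\cdot s(\bar g,\sigma,\bar i)\in I$ and $s(\bar g,\sigma,\bar i)\cdot m\in I$; I would handle, say, left multiplication. The product $m\cdot s(\bar g,\sigma,\bar i)$ equals $x_{h_1,j_1}\cdots x_{h_k,j_k}x_{g_1,i_1}\cdots x_{g_n,i_n} - \theta(\bar g,\sigma)\,x_{h_1,j_1}\cdots x_{h_k,j_k}x_{g_{\sigma(1)},i_{\sigma(1)}}\cdots x_{g_{\sigma(n)},i_{\sigma(n)}}$. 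Let $\bar g'=(h_1,\dots,h_k,g_1,\dots,g_n)\in G^{k+n}$ and let $\sigma'\in Sym(k+n)$ be the permutation fixing $1,\dots,k$ and acting as $\sigma$ (shifted) on the last $n$ coordinates. By \lemref{prop_comm_function}(1) we have $\theta(\bar g',\sigma')=\theta(\bar g,\sigma)$, so $m\cdot s(\bar g,\sigma,\bar i)=s(\bar g',\sigma',\bar i')\in I$, where $\bar i'$ is the concatenated index tuple. The same argument, fixing the last $k$ coordinates, handles right multiplication, so $I$ is a two-sided ideal.

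Next I would check closure under graded endomorphisms. A graded endomorphism $\varphi$ of $\FF\langle X_G\rangle$ sends each $x_{g,i}$ to some element of the $g$-component, which is a linear combination of monomials of the form $y=x_{g,1}^{(1)}\cdots x_{g,1}^{(m)}$ with the product of the degrees equal to $g$; by linearity and the fact that $I$ is already an ideal (hence closed under the multiplications needed to expand products of linear combinations), it is enough to treat the case where each $x_{g,i}$ is sent to a single monomial $w_i$ whose degrees multiply to $g$. Then $\varphi(s(\bar g,\sigma,\bar i))=w_1\cdots w_n - \theta(\bar g,\sigma)\,w_{\sigma(1)}\cdots w_{\sigma(n)}$, which is exactly the element obtained by applying \lemref{prop_comm_function}(2): writing each $w_i$ as a block $\bar g_i$ of variables with $\prod_j g_{i,j}=g_i$ and letting $\tilde\sigma$ be the block permutation induced by $\sigma$, we have $\theta(\bar g,\sigma)=\theta(\bar g^{\,\mathrm{conc}},\tilde\sigma)$, so $\varphi(s(\bar g,\sigma,\bar i))=s(\bar g^{\,\mathrm{conc}},\tilde\sigma,\bar i^{\,\mathrm{conc}})\in I$. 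This establishes part (1).

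For part (2), write $\bar A=\FF\langle X_G\rangle/I$; it inherits a $G$-grading because $I$ is a graded subspace (each $s(\bar g,\sigma,\bar i)$ is homogeneous of degree $g_1\cdots g_n$). I would verify the two conditions in the definition of a regular grading using the reformulation in \lemref{In_PI_words}. Condition (2) (the binomial identities) is immediate: by construction $s(\bar g,\sigma,\bar i)\in I=Id_G(\bar A)$, so $\bar A$ satisfies the binomial identity $x_{g_1,1}\cdots x_{g_n,n}-\theta(\bar g,\sigma)x_{g_{\sigma(1)},\sigma(1)}\cdots x_{g_{\sigma(n)},\sigma(n)}$ with the prescribed scalar $\theta(\bar g,\sigma)$. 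The main point, and the step I expect to be the real obstacle, is condition (1): one must show that no monomial $x_{g_1,1}\cdots x_{g_n,n}$ in distinct variables lies in $Id_G(\bar A)$, i.e. that such a monomial is nonzero in $\bar A$, equivalently that it does not lie in $I$. This is a linear-independence statement about $\FF\langle X_G\rangle/I$ and requires understanding a normal form for $I$. The natural approach is to order monomials (say lexicographically, after fixing for each degree-type $(\bar g)$ and each target $g_1\cdots g_n$ a representative ordering of the variables appearing) and argue that the relations $s(\bar g,\sigma,\bar i)$ form, in effect, a confluent rewriting system allowing each monomial to be rewritten to a scalar multiple of a chosen "sorted" representative, with the scalars being nonzero (they are products of values of $\theta$, hence units in $\FF^\times$ by \lemref{prop_comm_function}(3)); consistency of the scalars under different reduction paths is exactly guaranteed by \lemref{prop_comm_function}(3) (the cocycle-type relation $\theta_{(\bar g,\sigma)}\theta_{(\bar g^\sigma,\tau)}=\theta_{(\bar g,\sigma\tau)}$). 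This shows that the images of the sorted monomials form a spanning set on which $I$ imposes no further relations, so in particular each monomial in distinct variables maps to a nonzero element of $\bar A$. Hence both conditions of \lemref{In_PI_words} hold and the grading on $\bar A$ is regular with commutation function $\theta$. The final sentence of the proposition is then just the observation that $\theta$ itself was an arbitrary $G$-commutation function, so it is realized as the commutation function of the explicit algebra $\bar A$.
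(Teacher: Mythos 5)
Your part (1) follows the paper's route: closure under multiplication via condition (1) of \lemref{prop_comm_function} and closure under endomorphisms via condition (2). One caveat: your reduction ``by linearity and the fact that $I$ is an ideal'' to single-monomial substitutions is not quite right as stated, because an $s(\bar g,\sigma,\bar i)$ may contain repeated variables, and then substituting a sum produces cross terms that are not handled by treating each summand separately. The fix is either to expand position-by-position (pairing each choice of monomials in $\varphi(x_{g_1,i_1})\cdots x_{g_n,i_n}$ with the corresponding choice in the $\sigma$-permuted product, and applying condition (2) to each pair), or, as the paper does, to first reduce to multilinear $s$ (every $s\in S$ is an endomorphic image of a multilinear one) and factor $\varphi=\varphi_1\circ\varphi_2$ with $\varphi_2$ sending variables to sums of disjoint multilinear monomials.

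For part (2), you correctly isolate the real issue (no monomial in distinct variables lies in $I$) and the right ingredient (condition (3) of \lemref{prop_comm_function}), but you leave the decisive step as an assertion: that the binomial relations form a confluent rewriting system with consistent scalars. That claim is not automatic -- the rules are permutational, so termination/confluence in the usual sense is not even the natural framework -- and this is exactly where the paper does its (short) work. The paper argues directly: by homogeneity, a distinct-variable monomial in $I$ must lie in $V(\bar g,\bar i)$, which by condition (3) equals the span of $\left\{ s(\bar g,\sigma,\bar i)\mid e\neq\sigma\in Sym(\bar g)\right\} $; since the permuted monomial $x_{g_{\sigma(1)},i_{\sigma(1)}}\cdots x_{g_{\sigma(n)},i_{\sigma(n)}}$ occurs in exactly one spanning element, a coefficient comparison shows $V$ contains no monomial at all. (Equivalently, your normal-form idea can be made rigorous by defining, on each multidegree component, the linear functional sending the $\sigma$-permuted monomial to $\theta(\bar g,\sigma)^{-1}$ and checking via condition (3) that it annihilates every $s$, hence annihilates $I$ while being nonzero on each monomial.) Without such an argument -- either the coefficient comparison or the verified functional/normal form -- your proof of the crucial regularity condition is incomplete. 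Also, a small miscitation: the scalars are nonzero because $\theta$ takes values in $\FF^{\times}$ by definition, not by condition (3).
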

\begin{proof}
The proof is based on translating the conditions of \ref{lem:prop_comm_function}
into the language of $T$-ideals. We give here only an outline of
the proof and leave the details to the reader.
\begin{enumerate}
\item By definition $I$ is closed under addition. To see $I$ is closed
under the multiplication of arbitrary polynomials, it is sufficient
to show it is closed under multiplication by $x_{g,j}$ for any $g\in G$
and $j\in\NN$ which is exactly condition (1) in \lemref{prop_comm_function}.
\\
Next we show the ideal $I$ is closed under endomorphisms. Notice
that if $s\in S$ is multilinear and $\varphi\in End(\FF\left\langle X_{G}\right\rangle )$,
one can decompose $\varphi=\varphi_{1}\circ\varphi_{2}$ such that
$\varphi_{2}$ sends each $x_{g_{j},i_{j}}$ to a sum of multilinear
monomials, all disjoint from each other, and $\varphi_{1}$ sends
each $x_{g,j}$ to some $x_{g',j'}$. It now follows from condition
(2) in \lemref{prop_comm_function} that $\varphi_{2}(s)=\sum s_{l}$
for some $s_{l}\in S$ multilinear, and that $\varphi_{1}(S)\subseteq S$.
This completes the proof.
\item The algebra $\nicefrac{\FF\left\langle X_{G}\right\rangle }{I}$ has
a natural $G$-grading, and by its definition it satisfies condition
2 in the definition of a regular grading. Therefore, we only need
to show that it has no monomial identities with distinct indeterminates.
\\
Condition (3) in \lemref{prop_comm_function} translates into the
equation 
\[
s(\bar{g},\sigma,\bar{i})+\theta(\bar{g},\sigma)s(\bar{g}^{\sigma},\tau,\bar{i}^{\sigma})=s(\bar{g},\sigma\tau,\bar{i}),\qquad\bar{i}^{\sigma}=(i_{\sigma(1)},...,i_{\sigma(n)})
\]
for any $\bar{g}\in G^{n},\;\bar{i}\in\NN^{n},\;\sigma\in Sym(\bar{g})$
and $\tau\in Sym(\bar{g}^{\sigma})$. For each $\bar{g}=(g_{1},...,g_{n})\in G^{n}$
and $\bar{i}\in\NN^{n}$, we define 
\begin{eqnarray*}
S(\bar{g},\bar{i}) & = & \left\{ s(\bar{g}^{\sigma},\tau,\bar{i}^{\sigma})\;\mid\;\sigma\in Sym(\bar{g}),\;\tau\in Sym(\bar{g}^{\alpha})\right\} \\
V(\bar{g},\bar{i}) & = & span\left\{ S(\bar{g},\bar{i})\right\} =span\left\{ s(\bar{g},\sigma,\bar{i})\;\mid\; e\neq\sigma\in Sym(\bar{g})\right\} .
\end{eqnarray*}
It is easy to see that if $I$ contains a monomial $x_{g_{1},i_{1}}\cdots x_{g_{n},i_{n}}$
with distinct indeterminates, then it must be in $V=V((g_{1},...,g_{n}),(i_{1},...,i_{n}))$.
The term $\prod x_{g_{\sigma(j)},i_{\sigma(j)}}$ for $e\neq\sigma\in Sym(\bar{g})$
appears only in $s(\bar{g},\sigma,\bar{i})$, so we see that $V$
does not contain monomials and we are done.
\end{enumerate}
\end{proof}
\begin{defn}
Let $\theta$ be a $G$-commutation function. The algebra $\nicefrac{\FF\left\langle X_{G}\right\rangle }{I}$
defined in the previous proposition is called the $\theta$-relatively
free algebra.
\end{defn}
Let $A$ be a $G$-graded algebra. Let $\pi:G\rightarrow\bar{G}$
be a surjective homomorphism and let $A={\displaystyle \bigoplus_{\bar{g}\in\bar{G}}}A_{\bar{g}}$
be the induced grading on $A$ by $\bar{G}$ (that is $A_{\bar{g}}={\displaystyle \bigoplus_{\pi(g)=\bar{g}}}A_{g}$).
Clearly, for multilinear polynomial $f$ we have $f(x_{\bar{g}_{1},1},...,x_{\bar{g}_{n},n})\in Id_{\bar{G}}(A)$
if and only if $f(x_{g_{1},1},...,x_{g_{n},n})\in Id_{G}(A)$ for
every $g_{i}\in G$ with $\pi(g_{i})=\bar{g}_{i}$ and so, in the
particular case where $\pi:G\rightarrow\left\{ e\right\} $, we obtain
the aforementioned fact that algebras which are $G$-graded PI-equivalent,
are also PI-equivalent. This simple but important fact will enable
us to replace the algebra $A$ by a more tractable $G$-graded algebra
$B$ (satisfying the same $G$-graded identities as $A$) from which
it will be easier to deduce the invariance of the order of the group
which provides a nondegenerate regular grading on $A$.

For the rest of this section, unless stated otherwise, we assume that
$\FF$ is algebraically closed and $char(\FF)=0$.
\begin{lem}
\label{lem:Id_G->Id}Let $A,B$ be two regularly $G$-graded algebras
with commutation functions $\theta_{A}$ and $\theta_{B}$ respectively.
If $\theta_{A}=\theta_{B}$ then $Id_{G}(A)=Id_{G}(B)$. In particular,
$Id(A)=Id(B)$. \end{lem}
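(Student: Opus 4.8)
The plan is to show that the $G$-graded $T$-ideal of a regularly $G$-graded algebra $A$ with commutation function $\theta$ is completely determined by $\theta$; more precisely, that $Id_G(A)$ equals the $T$-ideal $I$ attached to $\theta$ in \propref{relative_free_theta}. Since the right-hand side depends only on $\theta$ and not on $A$, the equality $\theta_A=\theta_B$ immediately forces $Id_G(A)=Id_G(B)$, and then $Id(A)=Id(B)$ follows by specializing the surjection $\pi\colon G\to\{e\}$ as explained just before the lemma.

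First I would recall that over a field of characteristic zero, $Id_G(A)$ is generated by its multilinear strongly homogeneous polynomials, i.e.\ polynomials of the form $f(x_{g_1,1},\dots,x_{g_n,n})=\sum_{\sigma\in Sym(\bar g)}\lambda_\sigma\prod_i x_{g_{\sigma(i)},\sigma(i)}$ (this is quoted in the excerpt just before the proof of \thmref{Envelope}); so it suffices to prove that a polynomial of this shape lies in $Id_G(A)$ if and only if it lies in $I$. The inclusion $I\subseteq Id_G(A)$ is clear: each generator $s(\bar g,\sigma,\bar i)$ of $I$ is exactly the binomial identity $x_{g_1,i_1}\cdots x_{g_n,i_n}-\theta(\bar g,\sigma)x_{g_{\sigma(1)},i_{\sigma(1)}}\cdots x_{g_{\sigma(n)},i_{\sigma(n)}}$, which vanishes on $A$ by condition (2) of regularity together with the very definition of $\theta=\theta_A$.

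For the reverse inclusion, let $f=\sum_{\sigma\in Sym(\bar g)}\lambda_\sigma\prod_i x_{g_{\sigma(i)},\sigma(i)}\in Id_G(A)$ be multilinear and strongly homogeneous. Using the binomial relations in $I$, rewrite every monomial $\prod_i x_{g_{\sigma(i)},\sigma(i)}$ as $\theta(\bar g,\sigma)^{-1}\cdot\bigl(\prod_i x_{g_i,i}\bigr)$ modulo $I$ (here one uses part (3) of \lemref{prop_comm_function}, equivalently the cocycle-type relation $s(\bar g,\sigma,\bar i)+\theta(\bar g,\sigma)s(\bar g^{\sigma},\tau,\bar i^{\sigma})=s(\bar g,\sigma\tau,\bar i)$ from the proof of \propref{relative_free_theta}, to make these reductions consistent). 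This gives $f\equiv\bigl(\sum_{\sigma\in Sym(\bar g)}\lambda_\sigma\theta(\bar g,\sigma)^{-1}\bigr)\cdot\prod_i x_{g_i,i}\pmod I$. Call the scalar coefficient $\mu$. Now evaluate on $A$: for a suitable choice of $a_i\in A_{g_i}$ with $\prod a_i\neq0$ (available by condition (1) of regularity), each monomial of $f$ evaluates to a nonzero scalar times $\prod a_i$, and since $f\in Id_G(A)$ the total is zero, which forces $\mu=0$ (here one must check the reductions used to pass from $f$ to $\mu\prod x_{g_i,i}$ are exactly mirrored by the scalar identities satisfied by $\theta=\theta_A$ on $A$, so that $\mu=\sum\lambda_\sigma\theta_A(\bar g,\sigma)^{-1}$ is genuinely the evaluation coefficient). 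Hence $f\equiv0\pmod I$, i.e.\ $f\in I$, and the proof is complete.

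The main obstacle I expect is bookkeeping rather than anything deep: one must verify that the rewriting of an arbitrary multilinear strongly homogeneous polynomial into a single ``canonical'' monomial modulo $I$ is well defined and compatible with the scalar relations among the $\theta(\bar g,\sigma)$ (so that the coefficient $\mu$ obtained by formal reduction coincides with the evaluation coefficient on $A$). This is precisely what parts (1)--(3) of \lemref{prop_comm_function} are designed to guarantee, so the argument is forced, but it requires care to state cleanly; an alternative, essentially equivalent route is to invoke \propref{relative_free_theta} directly to identify $\nicefrac{\FF\langle X_G\rangle}{I}$ as a regularly $G$-graded algebra with commutation function $\theta$, observe $I\subseteq Id_G(A)$, and then use the absence of monomial identities in $\nicefrac{\FF\langle X_G\rangle}{I}$ (established in the proof of \propref{relative_free_theta}) to conclude that no further multilinear relations can hold on $A$ beyond those in $I$.
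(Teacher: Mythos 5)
Your proposal is correct and follows essentially the same route as the paper: reduce to multilinear strongly homogeneous polynomials, use the binomial identities to rewrite $f$ as $\bigl(\sum_{\sigma}\lambda_{\sigma}\theta(\bar g,\sigma)^{-1}\bigr)\prod_i x_{g_i,i}$, and use the absence of monomial identities (regularity) to conclude that membership in $Id_G(A)$ is the scalar condition $\sum_{\sigma}\lambda_{\sigma}\theta(\bar g,\sigma)^{-1}=0$, which depends only on $\theta$. The extra packaging via the relatively free algebra of \propref{relative_free_theta} is a harmless reformulation of the same computation.
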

\begin{proof}
Clearly, it is sufficient to consider multilinear polynomials. \\
Let $\bar{g}=(g_{1},...,g_{n})\in G^{n}$. Applying binomial $G$-graded
identities of $A$ (see \lemref{In_PI_words}), a polynomial $f(x_{g_{1},1},...,x_{g_{n},n})={\displaystyle \sum_{\sigma\in Sym(\bar{g})}\lambda_{\sigma}\prod_{i}x_{g_{\sigma(i)},\sigma(i)}}$
is a $G$-graded identity of $A$ if and only if $f'(x_{g_{1},1},...,x_{g_{n},n})={\displaystyle \left(\sum_{\sigma\in Sym(\bar{g})}\lambda_{\sigma}\theta_{(\bar{g},\sigma)}^{-1}\right)\prod_{i}x_{g_{i},i}}$
is a $G$-graded identity as well. But since there are no monomials
with distinct variables in $Id_{G}(A)$, $f'\in Id_{G}(A)$ if and
only if ${\displaystyle \sum_{\sigma\in Sym(\bar{g})}}\lambda_{\sigma}\theta_{(\bar{g},\sigma)}^{-1}=0$.
Thus, the statement 
\[
f(x_{g_{1},1},...,x_{g_{n},n})\in Id_{G}(A)
\]
is equivalent to a condition on the commutation function $\theta_{A}$
and the result follows.
\end{proof}
As mentioned above, we wish to replace any regularly $G$-graded algebra
$A$ with commutation function $\theta_{A}$ by a better understood
regularly $G$-graded algebra $B$ with commutation function $\theta_{B}=\theta_{A}$.

We first deal with the case where $\theta_{g,g}=1$ for all $g\in G$
(we remind the reader that in general $\theta_{g,g}=\pm1$ for all
$g\in G$). Here the algebra $B$ will be isomorphic to a suitable
twisted group algebra $B=\FF^{\alpha}G$, where $\alpha$ is a $2$-cocycle
on $G$ with values in $\FF^{\times}$. Recall that $B=\FF^{\alpha}G$
is isomorphic to the group algebra $\FF G$ as an $\FF$-vector space
and if $\{U_{g}:g\in G\}$ is an $\FF$-basis of $\FF^{\alpha}G$,
then the multiplication is defined by the rule $U_{g}U_{h}=\alpha(g,h)U_{gh}$
for every $g,h\in G$. It is well known that up to a $G$-graded isomorphism,
the twisted group algebra $\FF^{\alpha}G$ depends only on the cohomology
class of $\bar{\alpha}\in H^{2}(G,\FF^{\times})$ and not on the representative
$\alpha$. In order to construct the $2$-cocycle $\alpha=\alpha_{\theta}$,
we show that the commutation function $\theta=\theta_{A}$ (with $\theta_{g,g}=1$,
for all $g\in G$) determines uniquely an element in $Hom(M(G),\FF^{\times})$,
where $M(G)$ denotes the Schur multiplier of the group $G$. Then
applying the Universal Coefficient Theorem, we obtain an element in
$H^{2}(G,\FF^{\times})$ which by abuse of notation we denote again
by $\theta$.

Here is the precise statement and its proof.
\begin{lem}
\label{lem:construct_twisted_group_algebra}Let $\theta$ be a $G$-commutation
function such that $\theta_{g,g}=1$ for all $g\in G$. Then there
is a $2$-cocycle $\alpha\in Z^{2}(G,\FF^{\times})$ such that the
commutation function of $B=\FF^{\alpha}G$ is $\theta$.\end{lem}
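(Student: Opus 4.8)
The plan is to extract from $\theta$ an explicit element of $\operatorname{Hom}(M(G),\FF^{\times})$, where $M(G)$ is the Schur multiplier, and then invoke the Universal Coefficient Theorem to produce the desired class in $H^{2}(G,\FF^{\times})$. Recall that $M(G)\cong H_{2}(G,\ZZ)$ and that the Universal Coefficient Theorem gives a (split) exact sequence $0\to\operatorname{Ext}(G^{\mathrm{ab}},\FF^{\times})\to H^{2}(G,\FF^{\times})\to\operatorname{Hom}(M(G),\FF^{\times})\to 0$; since $\FF^{\times}$ is divisible, $\operatorname{Ext}(G^{\mathrm{ab}},\FF^{\times})=0$ and the map $H^{2}(G,\FF^{\times})\to\operatorname{Hom}(M(G),\FF^{\times})$ is an isomorphism. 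So it suffices to manufacture the right homomorphism $M(G)\to\FF^{\times}$ from $\theta$, and then to check that the twisted group algebra $\FF^{\alpha}G$ attached to the corresponding cohomology class has commutation function exactly $\theta$.

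First I would recall the Hopf-type description of $M(G)$ in terms of commutators: if $1\to R\to F\to G\to 1$ is a free presentation, then $M(G)\cong (R\cap [F,F])/[F,R]$. Elements of $M(G)$ are thus represented by products of commutators $[x_{1},y_{1}]\cdots[x_{k},y_{k}]\in F$ that lie in $R$, i.e. that evaluate to $e$ in $G$. The key point is that for a regular $G$-grading with commutation function $\theta$ satisfying $\theta_{g,g}=1$, a formal commutator word $\prod_{j}[g_{j},h_{j}]$ that equals $e$ in $G$ can be assigned a scalar by running the commutation rule: if $\bar{g}=(g_{1},h_{1},g_{1}^{-1},h_{1}^{-1},\dots)$ and $\sigma$ is the permutation realizing $\prod[g_{j},h_{j}]=e$ as a rearrangement of the identity word, then $\theta_{(\bar g,\sigma)}\in\FF^{\times}$. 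Using Lemma~\ref{lem:prop_comm_function} (and Remark~\ref{rem:e_blocks}), one checks this scalar is well defined: it is independent of how the commutator word is bracketed or reshuffled (condition (1)), it only depends on the $G$-values of the letters (condition (2)), and it is multiplicative when commutator words are concatenated (condition (3)). One also checks the scalar is trivial on $[F,R]$ — this is where the hypothesis $\theta_{g,g}=1$ enters, together with Remark~\ref{rem:e_blocks} which kills blocks multiplying to $e$ — so the assignment descends to a homomorphism $\Theta\colon M(G)\to\FF^{\times}$.

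Next, let $[\alpha]\in H^{2}(G,\FF^{\times})$ be the class mapping to $\Theta$ under the UCT isomorphism, pick a representing $2$-cocycle $\alpha$, and form $B=\FF^{\alpha}G$ with basis $\{U_{g}\}$, $U_{g}U_{h}=\alpha(g,h)U_{gh}$. One verifies directly that $B$ is regularly $G$-graded: each homogeneous component is spanned by an invertible element, so all products of homogeneous elements are nonzero, and any two homogeneous elements commute up to the scalar $\beta(g,h):=U_{g}U_{h}U_{g}^{-1}U_{h}^{-1}$, which is a skew-symmetric bicharacter of $G$. The final and main step is to show $\beta=\theta$ on commuting pairs (equivalently, on all of $G\times G$ via the multiplicative structure of commutation functions). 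This follows because, under the Hopf description, $\beta$ is precisely the bicharacter whose associated homomorphism $M(G)\to\FF^{\times}$ is the image of $[\alpha]$ under the UCT map — and we chose $[\alpha]$ so that this image is $\Theta$, which by construction agrees with $\theta$ on each commutator $[g,h]$, hence $\beta_{g,h}=\Theta([g,h])=\theta_{g,h}$. Since a $G$-commutation function with $\theta_{g,g}=1$ is determined by its values on commuting pairs (the general $\theta_{(\bar g,\sigma)}$ is a product of such $\theta_{g_i,g_j}$ by the bicharacter property in the last lemma), we conclude $\theta_{B}=\theta$.

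I expect the main obstacle to be the well-definedness and trivialization-on-$[F,R]$ verifications in the middle step: turning the bookkeeping of Lemma~\ref{lem:prop_comm_function}(1)--(3) into a clean proof that the commutator-word scalar factors through $(R\cap[F,F])/[F,R]$ requires care, especially checking that inserting a relator (an element of $R$) into the middle of a commutator word does not change the scalar — this is exactly the content of Remark~\ref{rem:e_blocks} but must be combined correctly with the multiplicativity. Everything after the homomorphism $\Theta$ is produced is essentially formal, modulo recalling the standard identification of the UCT map $H^{2}(G,\FF^{\times})\to\operatorname{Hom}(M(G),\FF^{\times})$ with the commutator pairing $[\alpha]\mapsto\big([g,h]\mapsto\beta_\alpha(g,h)\big)$.
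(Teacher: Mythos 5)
Your overall skeleton (UCT isomorphism $H^{2}(G,\FF^{\times})\cong Hom(M(G),\FF^{\times})$ plus the Hopf formula) is the same as the paper's, but the step you yourself flag as the main obstacle is exactly the mathematical content of the lemma, and you do not supply it. Declaring that ``one checks'' the commutator-word scalar is well defined and trivial on $[F,R]$ using \lemref{prop_comm_function} and \remref{e_blocks} is not enough: those conditions govern rearrangements of positive words $y_{g_{1}}\cdots y_{g_{n}}$, whereas here you must handle words containing inverse letters and conjugation by arbitrary elements of $F$, and it is not at all formal that the assignment factors through $(R\cap[F,F])/[F,R]$. The paper circumvents this combinatorial problem by a construction: it takes the $\theta$-relatively free algebra $A$ of \propref{relative_free_theta}, localizes at the set $\{x_{g,1}x_{g^{-1},2}\}_{g\in G}$ so that each $x_{g,1}$ becomes invertible, and then the map $y_{g}\mapsto x_{g,1}$ is an honest group homomorphism $F\to (A')^{\times}$, which makes well-definedness and triviality on $[F,R]$ automatic (the $e$-component is central, and the chosen words map to $\theta(\bar g,\sigma)\cdot 1$). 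The real work is proving that those elements are non-zero-divisors, i.e.\ that $A$ has no monomial identities even with repeated variables; this is done by linearization together with the interchange $ax_{g,1}bx_{g,2}c\equiv ax_{g,2}bx_{g,1}c$, and it is precisely where the hypothesis $\theta_{g,g}=1$ is used in an essential way. None of this appears in your proposal, so the key verification is missing rather than merely sketched.

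There is a second, independent gap at the end: you reduce the identity $\theta_{B}=\theta$ to agreement on commuting pairs, asserting that a $G$-commutation function with $\theta_{g,g}=1$ is determined by its values $\theta_{g_{i},g_{j}}$. That is true only for abelian $G$ (the paper states this explicitly as a remark for the abelian case); for nonabelian $G$ -- which is the whole point of this lemma, since the abelian case is Scheunert's theorem -- a general $\theta_{(\bar g,\sigma)}$ is not a product of pairwise values, and classes of commutators of commuting elements need not generate $M(G)$. The correct route is the one the paper takes: every element of $M(G)$ is represented by a word $y_{g_{1}}\cdots y_{g_{n}}y_{g_{\sigma(n)}}^{-1}\cdots y_{g_{\sigma(1)}}^{-1}$ with $\sigma\in Sym(\bar g)$, the UCT map sends $[\alpha]$ to the homomorphism taking this class to $\alpha(g_{1},\dots,g_{n})/\alpha(g_{\sigma(1)},\dots,g_{\sigma(n)})$, and this quotient is exactly the commutation function of $\FF^{\alpha}G$ at $(\bar g,\sigma)$; so once your homomorphism $\Theta$ is defined (and proved well defined) on all such classes, the equality of commutation functions holds on all tuples at once, with no reduction to commuting pairs needed.
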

\begin{proof}
The next construction follows the one in \cite{aljadeff_graded_2010}
(Prop. 1).

Recall that from the Universal Coefficient Theorem we get that for
any group $G$ we have an exact sequence 
\[
\xymatrix{1\ar[r] & Ext^{1}(G_{ab},\FF^{\times})\ar[r] & H^{2}(G,\FF^{\times})\ar[r]^{\pi} & Hom(M(G),\FF^{\times})\ar[r] & 1}
,
\]
where $M(G)$ is the Schur multiplier of $G$. Note that since $\FF$
is assumed to be algebraically closed, we have that $Ext^{1}(G_{ab},\FF^{\times})=0$
and hence in that case, the map $\pi$ is an isomorphism. Thus, our
task is to find a suitable element in $Hom(M(G),\FF^{\times})$ and
then show that its inverse image in $H^{2}(G,\FF^{\times})$ satisfies
the required property.

To start with, we fix a presentation of $M(G)$ via the Hopf formula:
Let $F$ be the free group $F=\left\langle y_{g}\;\mid\; g\in G\right\rangle $
and define $\varphi:F\to G$ by $\varphi(y_{g})=g$. Setting $R=\ker(\varphi)$
we have the exact sequence 
\[
\xymatrix{1\ar[r] & R\ar[r] & F\ar[r]^{\varphi} & G\ar[r] & 1}
.
\]
The Schur multiplier is then isomorphic to $\nicefrac{R\cap\left[F,F\right]}{\left[F,R\right]}$.

Next we show how an element $\alpha\in Z^{2}(G,\FF^{\times})$ determines
a map $\pi(\left[\alpha\right])$ on $\nicefrac{R\cap\left[F,F\right]}{\left[F,R\right]}$.
Let $\bar{g}=(g_{1},...,g_{n})\in G^{n}$ and $\sigma\in Sym(\bar{g})$.
Then $y_{g_{1}}\cdots y_{g_{n}}y_{g_{\sigma(n)}}^{-1}\cdots y_{g_{\sigma(1)}}^{-1}\in R\cap\left[F,F\right]$
and hence by the Hopf formula it determines an element in $M(G)$.
On the other hand, from \cite{aljadeff_graded_2010} we know that
any element in $M(G)$ has a presentation $y_{g_{1}}\cdots y_{g_{n}}y_{g_{\sigma(n)}}^{-1}\cdots y_{g_{\sigma(1)}}^{-1}\left[F,R\right]$
for some $\bar{g}\in G^{n}$ and $\sigma\in Sym(\bar{g})$, and moreover
the map 
\[
\pi\left(\left[\alpha\right]\right)(y_{g_{1}}\cdots y_{g_{n}}y_{g_{\sigma(n)}}^{-1}\cdots y_{g_{\sigma(1)}}^{-1}\left[R,F\right])=\frac{\alpha(g_{1},...,g_{n})}{\alpha(g_{\sigma(1)},...,g_{\sigma(n)})}
\]
is a well defined homomorphism.

Our next step is to show that $\psi(y_{g_{1}}\cdots y_{g_{n}}y_{g_{\sigma(n)}}^{-1}\cdots y_{g_{\sigma(1)}}^{-1}\left[R,F\right])=\theta(\bar{g},\sigma)$
is a well defined homomorphism. This will complete the proof of the
lemma.

Let $A=\nicefrac{\FF\left\langle X_{G}\right\rangle }{I}$ be the
$\theta$-relatively free algebra defined in \propref{relative_free_theta}.
Then $A$ is regularly $G$-graded with commutation function $\theta$.
If we can find elements $a_{g}\in A_{g}$, $g\in G$, which are invertible,
then we can define a group homomorphism $\tilde{\psi}:F\to A^{\times}$
induced by the map $y_{g}\mapsto a_{g}$. Notice that the image of
any commutator in $\left[R,F\right]$ is mapped to $1$ (because $R$
is mapped to $A_{e}$ which is in the center) while $y_{g_{1}}\cdots y_{g_{n}}y_{g_{\sigma(n)}}^{-1}\cdots y_{g_{\sigma(1)}}^{-1}$
is mapped to $\theta(\bar{g},\sigma)1$. The induced map $\varphi:M(G)\to\FF^{\times}$
is the required map. In general $A$ might not have such invertible
elements, so we need to construct new elements.

Let $S=\left\{ x_{g,1}x_{g^{-1},2}\right\} _{g\in G}\subseteq\FF\left\langle X_{G}\right\rangle $.
Note that it is sufficient to show that the elements in $S$ represent
nonzero divisors in $A$ since in that case, the localized algebra
$A'=AS^{-1}$ will still be regularly $G$-graded with commutation
function $\theta$ and in addition each $x_{g,1}$ will be invertible
(notice that $x_{g^{-1},2}x_{g,1}=\theta(g^{-1},g)x_{g,1}x_{g^{-1},2}$
and $\theta(g^{-1},g)=\theta(g,g)^{-1}=1$, so $x_{g,1}$ is right
and left invertible).

Suppose that there is some $0\neq f\in A$ such that $x_{g,1}x_{g^{-1}2}\cdot f\equiv0$.
We can assume that $f$ is homogeneous (i.e. its monomials have the
same $G$-homogeneous degree), and by standard methods (since the
field is infinite) we can assume that every variable $x_{h,i}$ appears
with the same total degree in each monomial of $x_{g,1}x_{g^{-1}2}\cdot f$
and therefore this is true also in $f$. Finally, using the binomial
identities we can assume that $f$ is a monomial. Now, by assumption
$x_{g,1}x_{g^{-1}2}\cdot f$ cannot be a monomial with different variables,
so we need to show there is no general monomial identities (i.e. with
possibly repeated variables).

Let $a_{1}x_{g,i}a_{2}x_{g,i}a_{3}\cdots a_{n}x_{g,i}a_{n+1}\in Id_{G}(A)$
be a monomial identity where $x_{g,i}$ does not appear in the monomials
$a_{i}$. Then using linearization we get that the polynomial

\[
f=\sum_{\sigma\in S_{n}}a_{1}z_{g,\sigma(1)}a_{2}z_{g,\sigma(2)}a_{3}\cdots a_{n}z_{g,\sigma(n)}a_{n+1}.
\]
is an identity as well. We claim that the monomials in $f$ are equal
modulo identities of $A$. In order to see this suppose that $a,b,c$
are monomials and denote by $h$ the degree of $a$. Let $g$ be some
element in $G$. Then 
\[
\left(y_{(hg)^{-1}}ax_{g,1}\right)bx_{g,2}c\equiv b\left(y_{(hg)^{-1}}ax_{g,1}\right)x_{g,2}c\equiv b\left(y_{(hg)^{-1}}ax_{g,2}\right)x_{g,1}c\equiv\left(y_{(hg)^{-1}}ax_{g,2}\right)bx_{g,1}c
\]
where the middle equation is true since $\theta_{g,g}=1$ and the
first and third equalities are true because monomials of degree $e$
are in the center. We therefore have $ax_{g,1}bx_{g,2}c\equiv ax_{g,2}bx_{g,1}c$.
Applying this equivalence we have that 
\[
\sum_{\sigma\in S_{n}}a_{1}y_{g,\sigma(1)}a_{2}y_{g,\sigma(2)}a_{3}\cdots a_{n}y_{g,\sigma(n)}a_{n+1}\equiv n!a_{1}y_{g,1}a_{2}y_{g,2}a_{3}\cdots a_{n}y_{g,n}a_{n+1}.
\]
Finally, we see that if we repeat this process for every pair $g\in G,\; i\in\mathbb{N}$
such that $x_{g,i}$ has total degree greater than $1$ in our monomial
identity, we obtain a monomial identity with distinct variables -
contradiction.
\end{proof}
Suppose that $A$ has a nondegenerate regular grading with commutation
function $\theta$ such that $\theta(g,g)=1$ for all $g\in G$. Let
$B=\FF^{\alpha}G$ as constructed in the last lemma. Clearly, the
twisted group algebra $B$ is regularly $G$-graded and the commutation
function is $\theta$. Invoking \lemref{Id_G->Id} we have the following
Corollary.
\begin{cor}
$Id_{G}(B)=Id_{G}(A)$. Consequently $Id(B)=Id(A)$. In particular
$exp(B)=exp(A)$.
\end{cor}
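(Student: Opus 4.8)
The plan is to observe that essentially all the work has already been done: the corollary is an immediate consequence of \lemref{construct_twisted_group_algebra} combined with \lemref{Id_G->Id}, together with the standard fact that the PI-exponent is an invariant of the ordinary $T$-ideal of identities.

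First I would record the hypotheses already in force before the statement: $A$ carries a (nondegenerate) regular $G$-grading with commutation function $\theta$ satisfying $\theta(g,g)=1$ for all $g\in G$, and $B=\FF^{\alpha}G$ is the twisted group algebra produced by \lemref{construct_twisted_group_algebra}, whose commutation function is precisely $\theta$. Since each homogeneous component of $\FF^{\alpha}G$ is one-dimensional and spanned by an invertible element $U_g$, the $G$-grading on $B$ is regular; hence $A$ and $B$ are two regularly $G$-graded algebras with $\theta_A=\theta_B=\theta$. Applying \lemref{Id_G->Id} yields $Id_G(A)=Id_G(B)$, and consequently — either by pushing both gradings forward along the trivial quotient $\pi\colon G\to\{e\}$ as in the discussion of induced gradings, or simply by the last assertion of that lemma — also $Id(A)=Id(B)$.

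It remains to treat the exponent. Because $B=\FF^{\alpha}G$ is finite-dimensional it is a PI-algebra, so $Id(B)\neq 0$ and, by the equality just obtained, $A$ is PI as well; in particular $exp(A)$ and $exp(B)$ are both defined via the Giambruno--Zaicev theorem. For each $n$ the space $P_n\cap Id(A)$ is nothing but the multilinear degree-$n$ component of the $T$-ideal $Id(A)$, so $Id(A)=Id(B)$ forces $c_n(A)=c_n(B)$ for every $n$, and passing to the limit gives $exp(A)=\lim_n c_n(A)^{1/n}=\lim_n c_n(B)^{1/n}=exp(B)$. I do not anticipate any genuine obstacle here; the only point requiring a word of care is to flag that the finite-dimensionality of $B$ is what guarantees the PI hypothesis under which $exp(\,\cdot\,)$ makes sense, after which the argument is purely formal.
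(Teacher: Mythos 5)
Your argument is exactly the paper's: the paper obtains the corollary by noting that $B=\FF^{\alpha}G$ from \lemref{construct_twisted_group_algebra} is regularly $G$-graded with the same commutation function $\theta$ as $A$, and then invoking \lemref{Id_G->Id} to get $Id_{G}(B)=Id_{G}(A)$, hence $Id(B)=Id(A)$ and $exp(B)=exp(A)$ since the exponent depends only on the ordinary $T$-ideal. Your additional remarks (regularity of the grading on $\FF^{\alpha}G$, finite-dimensionality of $B$ ensuring the PI hypothesis, equality of codimension sequences) are correct elaborations of the same route.
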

Our goal is to extract the cardinality of $G$ from $Id(B)$.

By Maschke's theorem, we know that any twisted group algebra $B=\FF^{\alpha}G$
is a direct sum of matrix algebras. We wish to show that the commutation
function $\theta$ is nondegenerate if and only if $B$ is simple,
or equivalently $\dim(Z(B))=1$. It is easily seen that the center
$Z(B)$ is spanned by elements of the form ${\displaystyle \sum_{\sigma\in G}}\lambda_{\sigma}U_{\sigma g\sigma^{-1}}$
where $g\in G$ and $\lambda_{\sigma}\in\FF$. We call a conjugacy
class that contribute a nonzero central element a \emph{ray class.}
The determination of the ray classes and their corresponding central
elements is well known (for example see \cite{Nauwelaerts198849},
section 2). The next lemma gives the condition for a conjugacy classes
to be a ray class and \lemref{generalized_simple_condition} will
generalize this idea to the $\nicefrac{\ZZ}{2\ZZ}$-simple case.
\begin{lem}
\label{lem:Ray_class}Let $g\in G$ and choose some set of left coset
representatives $\left\{ t_{i}\right\} _{1}^{k}$ of $C_{G}(g)$ in
$G$. For any 2-cocycle $\alpha\in Z^{2}(G,\FF^{\times})$ the following
conditions are equivalent:
\begin{enumerate}
\item For every $h\in C_{G}(g)$ we have $U_{g}U_{h}=U_{h}U_{g}$ in $\FF^{\alpha}G$.
\item The element $a={\displaystyle \sum_{i=1}^{k}}U_{t_{i}}U_{g}U_{t_{i}}^{-1}$
is central in $\FF^{\alpha}G$.
\end{enumerate}
In addition, if there are $\lambda_{i}\in\FF,\; i=1,...,k$, not all
zero, such that $b={\displaystyle \sum_{i=1}^{k}}\lambda_{i}U_{t_{i}}U_{g}U_{t_{i}}^{-1}$
is central in $\FF^{\alpha}G$ then $\lambda_{i}=\lambda_{1}$ for
all $i$. In particular we get that $a=\frac{1}{\lambda_{1}}b$ is
central in $\FF^{\alpha}G$.\end{lem}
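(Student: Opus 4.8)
The plan is to analyze when a linear combination $b=\sum_{i=1}^k \lambda_i U_{t_i}U_g U_{t_i}^{-1}$ lies in the center of $\FF^\alpha G$, exploiting the standard fact that the elements $\{U_{t_i}U_g U_{t_i}^{-1}\}_{i=1}^k$ are (scalar multiples of) the distinct homogeneous basis elements indexed by the conjugacy class of $g$, hence $\FF$-linearly independent. First I would record that $U_{t_i}U_g U_{t_i}^{-1}$ is a nonzero scalar times $U_{s_i}$, where $s_i = t_i g t_i^{-1}$ runs over the conjugacy class $\mathrm{Cl}(g)$ without repetition as $i$ varies (since the $t_i$ are coset representatives of $C_G(g)$); this reduces every statement to a computation with the $U_{s_i}$.

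For the equivalence of (1) and (2), I would argue as follows. Centrality of $a=\sum_i U_{t_i}U_g U_{t_i}^{-1}$ is equivalent to $U_x a U_x^{-1}=a$ for all $x\in G$; conjugation by $U_x$ permutes the terms of $a$ up to scalars, so centrality amounts to a compatibility of those scalars. The cleanest route: show that $a$ is central iff $a$ commutes with $U_h$ for all $h\in C_G(g)$ (the off-$C_G(g)$ directions being automatic from the orbit structure — conjugation by a general $U_x$ permutes the summands, and the scalar cocycle obstructions cancel precisely when the $C_G(g)$-condition holds). Then commuting with $U_h$ for $h\in C_G(g)$ forces, looking at the coefficient of $U_g$ itself, the relation $U_g U_h = U_h U_g$; conversely if $U_g U_h=U_h U_g$ for all $h\in C_G(g)$ one propagates this to every conjugate $U_{s_i}$ by conjugating the identity $U_gU_h=U_hU_g$ by $U_{t_i}$, obtaining $U_{s_i}U_{t_i h t_i^{-1}}=U_{t_i h t_i^{-1}}U_{s_i}$, and since $\{t_i h t_i^{-1}: h\in C_G(g)\}=C_G(s_i)$ this is exactly the hypothesis that makes $a$ central by the well-known ray-class criterion (cf. \cite{Nauwelaerts198849}). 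This is essentially the standard characterization of ray classes, so I would keep it brief and cite the literature.

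For the uniqueness-of-coefficients statement, suppose $b=\sum_i \lambda_i U_{t_i}U_gU_{t_i}^{-1}$ is central with not all $\lambda_i$ zero. Conjugating $b$ by $U_x$ for $x\in G$ again permutes the summands up to nonzero scalars; equating the coefficient of each basis element $U_{s_j}$ in $U_x b U_x^{-1}$ and in $b$, and using $\FF$-linear independence of the $U_{s_j}$, yields a system of equations $\lambda_{\rho(i)} = c_{x,i}\,\lambda_i$ with $c_{x,i}\in\FF^\times$ the cocycle scalars and $\rho$ the induced permutation of indices. Because $G$ acts transitively on $\mathrm{Cl}(g)$, for any $i$ there is $x$ with $\rho$ carrying $1$ to $i$, so every $\lambda_i$ is a nonzero scalar multiple of $\lambda_1$; in particular $\lambda_1\neq 0$. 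The final claim is then to pin down those scalars: applying the same argument with $b$ replaced by the genuinely central $a$ (which exists once we know condition (1) holds — and condition (2) for $b$ forces condition (1) by the coefficient-of-$U_g$ computation as in the previous paragraph), we see $a$ satisfies $c_{x,i}=1$ for all $x,i$; hence the system for $b$ reads $\lambda_{\rho(i)}=\lambda_i$ for all $x$, and transitivity gives $\lambda_i=\lambda_1$ for all $i$. Therefore $b=\lambda_1 a$ and $a=\frac{1}{\lambda_1}b$ is central.

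The main obstacle I anticipate is bookkeeping the $2$-cocycle scalars cleanly — i.e. verifying that conjugation by an arbitrary $U_x$ sends $U_{t_i}U_gU_{t_i}^{-1}$ to a well-defined scalar times $U_{t_j}U_gU_{t_j}^{-1}$ with $j$ depending only on $x$ and $i$ (not on the choice of coset representatives), and that for the central element $a$ these scalars are all $1$. This is routine but requires care: one should fix the coset representatives $\{t_i\}$ once and for all, write $x t_i = t_{\rho_x(i)} c_i$ with $c_i\in C_G(g)$, and track the cocycle values through $U_x U_{t_i} = \alpha(x,t_i)U_{xt_i}$ and $U_{t_{\rho_x(i)}c_i}=\alpha(t_{\rho_x(i)},c_i)^{-1}U_{t_{\rho_x(i)}}U_{c_i}$; the condition $U_gU_h=U_hU_g$ for $h\in C_G(g)$ is exactly what makes the leftover $U_{c_i}$-factors cancel. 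Everything else is linear algebra over $\FF$ using the linear independence of distinct homogeneous components of $\FF^\alpha G$.
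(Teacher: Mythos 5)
Your proposal is correct and follows essentially the same route as the paper: decompose $U_xU_{t_i}$ as a scalar times $U_{t_{\rho_x(i)}}U_{c_i}$ with $c_i\in C_G(g)$ so that condition (1) makes conjugation by any $U_x$ permute the summands of $a$ exactly, and conversely read off $U_gU_h=U_hU_g$ by comparing the coefficient of $U_g$ under conjugation by $U_h$. For the coefficient statement the paper is slightly more direct (conjugate $b$ by $U_{t_j}^{-1}$ and compare the coefficient of $U_g$ to get $\lambda_j=\lambda_1$ at once), whereas you first deduce that all $\lambda_i$ are nonzero, recover condition (1), and then use the trivial conjugation scalars; both are valid and rest on the same linear-independence argument.
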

\begin{proof}
Suppose first that $(1)$ holds. Let $w\in G$. Then for every $i\in\left\{ 1,...,k\right\} $,
there are $\tau(i)\in\left\{ 1,...,k\right\} $, $h_{i}\in C_{G}(g)$
and $c_{i}\in\FF^{\times}$ such that $U_{w}U_{t_{i}}=c_{i}U_{t_{\tau(i)}}U_{h_{i}}$.
Note that $\tau=\tau_{w}$ is a permutation of $\left\{ 1,...,k\right\} $.
Then we have that 
\begin{align*}
U_{w}aU_{w}^{-1} & =\sum\left(U_{w}U_{t_{i}}\right)U_{g}\left(U_{w}U_{t_{i}}\right)^{-1}=\sum\left(c_{i}U_{t_{\tau(i)}}U_{h_{i}}\right)U_{g}\left(c_{i}U_{t_{\tau(i)}}U_{h_{i}}\right)^{-1}\\
 & =\sum U_{t_{\tau(i)}}\left(U_{h_{i}}U_{g}U_{h_{i}}^{-1}\right)U_{t_{\tau(i)}}^{-1}=\sum U_{t_{\tau(i)}}U_{g}U_{t_{\tau(i)}}^{-1}=a,
\end{align*}
and so $U_{w}a=aU_{w}$. Since the set $\{U_{w}:w\in G\}$, spans
$\FF^{\alpha}G$, we get that $a$ is central.

On the other hand if $a$ is central and $h\in C_{G}(g)$, then there
is some $c\in\FF^{\times}$ such that $U_{h}U_{g}U_{h}^{-1}=cU_{g}$.
Assume that with $t_{1}\in C_{G}(g)$, so there is $c'\in\FF^{\times}$
such that $U_{t_{1}}U_{g}U_{t_{1}}^{-1}=c'U_{g}$. Thus we have 
\begin{eqnarray*}
a & = & \sum_{i=1}^{k}U_{t_{i}}U_{g}U_{t_{i}}^{-1}=c'U_{g}+\sum_{i=2}^{k}U_{t_{i}}U_{g}U_{t_{i}}^{-1}\\
a & = & U_{h}aU_{h}^{-1}=c'U_{h}U_{g}U_{h}^{-1}+\sum_{i=2}^{k}\left(U_{h}U_{t_{i}}\right)U_{g}\left(U_{h}U_{t_{i}}\right)^{-1}=c\cdot c'U_{g}+\sum_{i=2}^{k}U_{ht_{i}}U_{g}U_{ht_{i}}^{-1},
\end{eqnarray*}
so we must have that $c=1$ and we get that $(2)\Rightarrow(1)$.

Assume that $b=\sum\lambda_{i}U_{t_{i}}U_{g}U_{t_{i}}^{-1}$ is central
for some $\lambda_{i}\in\FF$ not all zero. For any $j\in\left\{ 1,...,k\right\} $
we get that 
\[
b=U_{t_{j}}^{-1}bU_{t_{j}}=\lambda_{j}U_{g}+{\displaystyle \sum_{i\neq j}}\lambda_{i}\left(U_{t_{j}}^{-1}U_{t_{i}}\right)U_{g}\left(U_{t_{i}}^{-1}U_{t_{j}}\right),
\]
so we must have $\lambda_{j}=\lambda_{1}$. In particular $\lambda_{1}\neq0$,
so $b=\lambda_{1}a$.
\end{proof}
By the last lemma, each ray class contributes only one central element
element up to a scalar multiplication, which we call a \emph{ray element}.
In addition, ray elements from different ray classes are linearly
independent. Thus we get that $\dim(Z(B))$ is the number of ray classes.
\begin{lem}
\label{lem:minimal+twisted}Let $B=\FF^{\alpha}G$, where $G$ is
a finite group of order $n$. Then $B$ is simple if and only if $\alpha$
is nondegenerate. Furthermore, in that case we have $B\cong M_{\sqrt{n}}(\FF)$.\end{lem}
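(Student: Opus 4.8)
The plan is to combine the semisimplicity of twisted group algebras with the ray-class analysis of \lemref{Ray_class}. First I would note that since $char(\FF)=0$, Maschke's theorem applies to $B=\FF^{\alpha}G$, so $B$ is semisimple; and since $\FF$ is algebraically closed, $B\cong\bigoplus_{i}M_{d_{i}}(\FF)$. Consequently $B$ is simple if and only if $\dim_{\FF}Z(B)=1$. By the discussion immediately following \lemref{Ray_class}, $\dim_{\FF}Z(B)$ equals the number of ray classes of $G$ with respect to $\alpha$, so the task reduces to showing that the only ray class is $\{e\}$ precisely when $\alpha$ is nondegenerate.

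Next I would identify the ray classes. The class $\{e\}$ is always a ray class, because $U_{e}$ is a nonzero scalar multiple of the identity and hence central; so it contributes $1$ to $\dim_{\FF}Z(B)$ unconditionally. For $g\neq e$, \lemref{Ray_class} tells us that the conjugacy class of $g$ is a ray class exactly when $U_{g}U_{h}=U_{h}U_{g}$ for every $h\in C_{G}(g)$. Now for commuting $g,h$ one has $U_{g}U_{h}=\alpha(g,h)U_{gh}$ and $U_{h}U_{g}=\alpha(h,g)U_{gh}$, so the commutation function of $B$ satisfies $\theta_{g,h}=\alpha(g,h)/\alpha(h,g)$, and therefore $U_{g}U_{h}=U_{h}U_{g}$ if and only if $\theta_{g,h}=1$. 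Hence the class of $g\neq e$ is a ray class if and only if $\theta_{g,h}=1$ for all $h\in C_{G}(g)$, that is, if and only if the nondegeneracy condition fails at $g$.

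Putting this together gives the chain of equivalences $B\text{ simple}\iff\dim_{\FF}Z(B)=1\iff\{e\}\text{ is the only ray class}\iff\text{for every }e\neq g\in G\text{ there is }h\in C_{G}(g)\text{ with }\theta_{g,h}\neq1\iff\alpha\text{ is nondegenerate}$, which proves the first assertion. For the ``furthermore'' part, suppose $B$ is simple; then $B\cong M_{d}(\FF)$ for some $d\geq1$, and comparing dimensions over $\FF$ we get $n=|G|=\dim_{\FF}\FF^{\alpha}G=\dim_{\FF}M_{d}(\FF)=d^{2}$, so $n$ is a perfect square and $d=\sqrt{n}$, i.e. $B\cong M_{\sqrt{n}}(\FF)$.

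There is no genuinely hard analytic step here; the argument is essentially bookkeeping once \lemref{Ray_class} is available. The only points that require a bit of care are the translation between the hypothesis ``$\alpha$ nondegenerate'' (phrased via the $2$-cocycle) and the ray-class criterion (phrased via commutation of the $U_{g}$), which is exactly the identity $\theta_{g,h}=\alpha(g,h)/\alpha(h,g)$ for commuting $g,h$, and the observation that $\{e\}$ is always a ray class, so that ``$B$ simple'' is genuinely equivalent to ``no class other than $\{e\}$ is a ray class.''
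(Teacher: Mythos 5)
Your proof is correct and follows essentially the same route as the paper: Maschke plus algebraic closure reduce simplicity to $\dim Z(B)=1$, the ray-class count from \lemref{Ray_class} identifies when only $\{e\}$ is a ray class (equivalently $\alpha$ nondegenerate), and a dimension count gives $B\cong M_{\sqrt{n}}(\FF)$. Your extra step spelling out $\theta_{g,h}=\alpha(g,h)/\alpha(h,g)$ for commuting $g,h$ is a harmless elaboration of what the paper leaves implicit.
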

\begin{proof}
As we remarked before the last lemma, $B$ is simple if and only if
$\dim(Z(B))=1$. Since $Z(B)$ is spanned by ray elements then $B$
is simple if and only if there is only one ray class (which is $\left\{ e\right\} $).
By the previous lemma this holds if and only if the cocycle $\alpha$
is nondegenerate.

Finally, we note that if $B$ is simple, then $B\cong M_{k}(\FF)$
and $k^{2}=\dim(B)=\left|G\right|=n$.
\end{proof}
We can complete now the proof of \thmref{Main_Theorem} in case the
commutation function satisfies $\theta_{g,g}=1$ for every $g\in G$.
Indeed, in \cite{regev_codimensions_1984} Regev showed that $\exp(M_{k}(\FF))=\dim\left(M_{k}(\FF)\right)=k^{2}$
and since the exponent of an algebra depends only on its ideal of
identities, we have from \lemref{Id_G->Id} that if $A$ has a regular
$G$-grading such that $\theta_{g,g}=1$ for all $g\in G$, then $\left|G\right|$
is an invariant of $A$ (as an algebra and independent of the grading).
\begin{cor}
Let $A$ be an algebra over an algebraically closed field $\FF$ of
characteristic zero. If $G$ is a finite group such that $A$ has
a nondegenerate regular $G$-grading with $\theta_{g,g}=1$ for all
$g\in G$ then $\left|G\right|=\exp(A)$.
\end{cor}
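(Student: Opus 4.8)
The plan is to replace $A$ by a twisted group algebra whose PI-exponent is known. First I would appeal to \lemref{construct_twisted_group_algebra}: since $\theta$ is a $G$-commutation function with $\theta_{g,g}=1$ for every $g\in G$, there is a $2$-cocycle $\alpha\in Z^{2}(G,\FF^{\times})$ for which the twisted group algebra $B=\FF^{\alpha}G$ is regularly $G$-graded with commutation function precisely $\theta$. Since $A$ and $B$ share the same commutation function, \lemref{Id_G->Id} yields $Id_{G}(A)=Id_{G}(B)$, hence $Id(A)=Id(B)$, and therefore $\exp(A)=\exp(B)$, because the PI-exponent is determined by the ordinary $T$-ideal of identities.

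Next I would bring in the nondegeneracy hypothesis. By assumption the regular $G$-grading on $A$ is nondegenerate, so $\theta$ is a nondegenerate commutation function, and since $B$ carries the same $\theta$, the cocycle $\alpha$ is nondegenerate. Now \lemref{minimal+twisted} applies and gives that $B=\FF^{\alpha}G$ is simple; writing $n=|G|$, it moreover forces $B\cong M_{\sqrt{n}}(\FF)$ (in particular $n$ is a perfect square).

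Finally I would invoke Regev's computation $\exp(M_{k}(\FF))=\dim M_{k}(\FF)=k^{2}$ from \cite{regev_codimensions_1984}. With $k=\sqrt{n}$ this gives $\exp(B)=n=|G|$, and combining with $\exp(A)=\exp(B)$ we conclude $\exp(A)=|G|$, so in particular $A$ is PI.

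The only step that requires care — the rest being bookkeeping assembly of the preceding lemmas — is checking that nondegeneracy of the commutation function $\theta$ on $A$ genuinely transfers to nondegeneracy of the cocycle $\alpha$, so that $B$ is simple rather than a nontrivial direct sum of matrix algebras. This is exactly the ray-class analysis of \lemref{Ray_class}, which translates the condition $\dim Z(\FF^{\alpha}G)=1$ into the vanishing of the induced bicharacter on centralizers. Once this is in hand the corollary is immediate.
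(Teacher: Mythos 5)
Your proposal is correct and follows essentially the same route as the paper: construct $B=\FF^{\alpha}G$ via \lemref{construct_twisted_group_algebra}, transfer identities and hence the exponent by \lemref{Id_G->Id}, use the ray-class analysis of \lemref{Ray_class} together with \lemref{minimal+twisted} to get $B\cong M_{\sqrt{|G|}}(\FF)$ from nondegeneracy, and finish with Regev's formula $\exp(M_{k}(\FF))=k^{2}$. No gaps.
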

We move on to the general case where $\theta_{g,g}$ can be $-1$.
Let $H=\left\{ g\in G\;\mid\;\theta_{g,g}=1\right\} $. We are to
show that $H$ is a subgroup of $G$ of index $1$ or $2$. Then,
if the index is $1$, we are in the previous case where $\theta_{g,g}=1$
for all $g\in G$ whereas in the second case, we will find a twisted
group algebra for the group $G$ such that its Grassmann envelope
will be PI-equivalent to $A$.

Let $E=E_{1}\oplus E_{-1}$ be the infinite dimensional Grassmann
algebra over the field $\FF$, where $E_{1}$ and $E_{-1}$ are the
even and odd components of $E$. As noted above, this grading on $E$
is a regular $C_{2}$-grading with commutation function $\tau:C_{2}\times C_{2}\rightarrow\FF^{\times}$
determined by $\tau(1,1)=\tau(1,-1)=\tau(-1,1)=1$ and $\tau(-1,-1)=-1$.
\begin{lem}
\label{lem:construct_algebra}Let $\theta$ be a $G$-commutation
function. Then there is a $2$-cocycle $\alpha\in Z^{2}(G,\FF^{\times})$
and a subgroup $H\leq G$ such that for $B=\FF^{\alpha}G$, $B_{1}={\displaystyle \bigoplus_{h\in H}}B_{h},\; B_{-1}={\displaystyle \bigoplus_{g\notin H}}B_{g}$,
the Grassmann envelope $\tilde{B}=\left(B_{1}\otimes E_{1}\right)\oplus\left(B_{-1}\otimes E_{-1}\right)$
is a regularly $G$-graded algebra with commutation function $\theta$. \end{lem}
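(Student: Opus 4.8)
The plan is to split along the subgroup $H=\{g\in G:\theta_{g,g}=1\}$. The first task is to show that $g\mapsto\theta_{g,g}$ is a homomorphism $G\to\{\pm1\}$, so that $H$ is indeed a subgroup of index $1$ or $2$. We already know $\theta_{g,g}\in\{\pm1\}$ for each $g$, so it suffices to check multiplicativity. The natural move is to work in the $\theta$-relatively free algebra $A=\FF\langle X_G\rangle/I$ of \propref{relative_free_theta}, pick homogeneous elements $a_g\in A_g$ with all the relevant products nonzero, and compare two ways of commuting $a_{gh}$ past itself versus commuting $a_g a_h$ past $a_g a_h$ using the binomial identities. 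Concretely, one writes $\theta_{gh,gh}$ in terms of $\theta_{g,g},\theta_{h,h}$ and the cross terms $\theta_{g,h},\theta_{h,g}$, and then uses $\theta_{g,h}\theta_{h,g}=1$ (which holds for commuting elements, and here $g,h$ need not commute — so one must be a little careful and instead swap the adjacent pair $h,g$ inside a longer word where the outer factors make the product land back in the same component). The upshot is $\theta_{gh,gh}=\theta_{g,g}\theta_{h,h}$, hence $H\trianglelefteq G$ with $[G:H]\le 2$.

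Next, if $H=G$ we are done by \lemref{construct_twisted_group_algebra}: it directly produces a $2$-cocycle $\alpha$ with $\FF^\alpha G$ having commutation function $\theta$, and $\tilde B=B$ works (take $H=G$, $E_{-1}$-part empty). So assume $[G:H]=2$. The idea is to pass from $\theta$ to a modified commutation function $\tau\theta$ on the Grassmann envelope, where $\tau$ denotes the $C_2=G/H$-commutation function of $E$ pulled back along $G\to G/H$, and to observe that $(\tau\theta)_{g,g}=\tau_{gH,gH}\theta_{g,g}$. For $g\in H$ this is $1\cdot 1=1$; for $g\notin H$ it is $(-1)\cdot(-1)=1$. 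So $\tau\theta$ satisfies the hypothesis of \lemref{construct_twisted_group_algebra}, provided one first checks $\tau\theta$ is genuinely a $G$-commutation function, i.e.\ satisfies conditions (1), (2), (3) of \lemref{prop_comm_function}. This is the point that needs care: one has to verify that $\tau$ (as a function on pairs $(\bar g,\sigma)$ defined through the quotient $G/H$ and the Grassmann sign) is itself a $G$-commutation function, and then invoke \lemref{Oper_on_reg_alg}(4) — or rather its commutation-function statement — to get that the pointwise product $\tau\theta$ is one too. Granting this, \lemref{construct_twisted_group_algebra} yields $\alpha\in Z^2(G,\FF^\times)$ such that $B=\FF^\alpha G$ has commutation function $\tau\theta$.

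Finally, assemble the Grassmann envelope. Set $B_1=\bigoplus_{h\in H}B_h$, $B_{-1}=\bigoplus_{g\notin H}B_g$, giving $B$ a $C_2$-grading, and form $\tilde B=(B_1\otimes E_1)\oplus(B_{-1}\otimes E_{-1})$, graded over $G$ by placing $B_g\otimes E_{\pm1}$ in degree $g$ according to whether $g\in H$ or not (exactly the $E(A)$ construction from the introduction). That the $G$-grading on $\tilde B$ is regular is already sketched in the introduction: for $\bar g\in G^n$ and $\sigma\in Sym(\bar g)$ one computes, on homogeneous elements $z_{g_i}=U_{g_i}\otimes w_i$, that $z_{g_1}\cdots z_{g_n}=\tau_{((g_1H,\dots,g_nH),\sigma)}\,(\tau\theta)_{(\bar g,\sigma)}\,z_{g_{\sigma(1)}}\cdots z_{g_{\sigma(n)}}$, where the first factor is the Grassmann sign picked up by reordering the $w_i$ and the second is the reordering constant in $B$. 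Multiplying out, the Grassmann sign contributed by the $w_i$ is $\tau_{((g_1H,\dots,g_nH),\sigma)}^{-1}$ times what $B$'s commutation function already records, and the two $\tau$-factors combine so that the net scalar is precisely $\theta_{(\bar g,\sigma)}$; regularity (nonvanishing of some product) follows because the $U_{g_i}$ are invertible in $B$ and one can choose the $w_i\in E$ so that $w_1\cdots w_n\neq 0$. Hence $\tilde B$ is regularly $G$-graded with commutation function $\theta$, as claimed.

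I expect the main obstacle to be the bookkeeping in showing $\tau\theta$ is a $G$-commutation function and then that the reordering scalar for $\tilde B$ is exactly $\theta_{(\bar g,\sigma)}$ rather than $\theta$ twisted by some residual sign — i.e.\ checking that the Grassmann sign $\tau$ cancels correctly against the $\tau$ built into $\FF^\alpha G$'s commutation function $\tau\theta$. Once the sign computation $(\tau\theta)\cdot\tau^{-1}=\theta$ is pinned down on all $(\bar g,\sigma)$ (not merely on pairs), the rest is routine given \lemref{construct_twisted_group_algebra} and \lemref{Oper_on_reg_alg}.
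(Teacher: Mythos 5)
Your plan follows the paper's architecture exactly: set $\psi(g)=\theta_{g,g}$, put $H=\ker\psi$, twist $\theta$ by the Grassmann sign pulled back along $\psi$ so that the diagonal becomes trivial, apply \lemref{construct_twisted_group_algebra} to obtain $B=\FF^{\alpha}G$ with commutation function $\tau\theta$, and then pass to the Grassmann envelope, where the two sign factors cancel (as $\tau$ takes values $\pm1$) leaving exactly $\theta$. That part is sound; the cleanest way to see that $\tau\theta$ is indeed a $G$-commutation function is the paper's device of restricting the regularly $G\times C_{2}$-graded algebra $A\otimes E$ to the subgroup $\tilde{G}=\{(g,\psi(g))\}\cong G$, which is a special case of \lemref{Oper_on_reg_alg}. (A harmless imprecision: when $H=G$ the envelope in the statement is $B\otimes E_{1}$, not literally $B$, but it has the same commutation function.)

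The genuine gap is in your first step, the multiplicativity of $\psi$. For noncommuting $g,h$ the ``cross terms'' $\theta_{g,h},\theta_{h,g}$ are not defined at all, and your proposed repair --- transposing an adjacent pair of letters of degrees $h$ and $g$ inside a longer word whose outer factors ``make the product land back in the same component'' --- cannot be carried out: transposing adjacent letters of degrees $h$ and $g$ changes the total degree from $u\,hg\,v$ to $u\,gh\,v$, and these coincide if and only if $gh=hg$, \emph{independently} of the bordering factors $u,v$. So no choice of outer word legalizes that swap, and the commutation axiom assigns no scalar to it (nor does lying in the same homogeneous component force proportionality of two words). The missing idea, which the paper uses, is different: work in the $\theta$-relatively free algebra of \propref{relative_free_theta}, border the word $x_{g,1}y_{h,1}x_{g,2}y_{h,2}$ by $w_{g^{-1}}$ on the left and $z_{h^{-1}}$ on the right, and exploit that monomials of total degree $e$ are central (\remref{e_blocks}), so the degree-$e$ blocks $\left(w_{g^{-1}}x_{g,1}\right)$ and $\left(y_{h,2}z_{h^{-1}}\right)$ may be transported rigidly with scalar $1$. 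One then only ever swaps adjacent pairs of \emph{equal} degree ($g$ with $g$, $h$ with $h$), which is always a legal permutation, and comparing the result with the swap of the two $gh$-blocks gives $\theta_{gh,gh}=\theta_{g,g}\theta_{h,h}$. Without this (or an equivalent) mechanism your step 1 does not go through, and everything downstream depends on $H$ being a subgroup of index at most $2$.
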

\begin{proof}
Let $\psi:G\rightarrow\left\{ \pm1\right\} $ be the map $\psi(g)=\theta_{g,g}$.
We claim that $\psi$ is a homomorphism. To see this, let $A$ be
the $\theta$-relatively free algebra (see \propref{relative_free_theta}).
For $h,g\in G$, let $\theta_{gh,gh}$ be the (unique) scalar such
that $x_{g,1}y_{h,1}x_{g,2}y_{h,2}=\theta_{gh,gh}x_{g,2}y_{h,2}x_{g,1}y_{h,1}$
in $A$ (we use $y_{h,*}$ instead of $x_{h,*}$ for clarity). From
\remref{e_blocks} we see that monomials of total degree $e$ are
central and hence we have

\begin{eqnarray*}
\left(w_{g^{-1}}x_{g,1}\right)y_{h,1}x_{g,2}\left(y_{h,2}z_{h^{-1}}\right) & = & y_{h,1}\left(y_{h,2}z_{h^{-1}}\right)\left(w_{g^{-1}}x_{g,1}\right)x_{g,2}=\theta_{h,h}\theta_{g,g}\left(y_{h,2}y_{h,1}\right)z_{h^{-1}}w_{g^{-1}}\left(x_{g,2}x_{g,1}\right)\\
 & = & \theta_{h,h}\theta_{g,g}y_{h,2}\left(y_{h,1}z_{h^{-1}}\right)\left(w_{g^{-1}}x_{g,2}\right)x_{g,1}=\theta_{h,h}\theta_{g,g}\left(w_{g^{-1}}x_{g,2}\right)y_{h,2}x_{g,1}\left(y_{h,1}z_{h^{-1}}\right)\\
w_{g^{-1}}\left(x_{g,1}y_{h,1}\right)\left(x_{g,2}y_{h,2}\right)z_{h^{-1}} & = & \theta_{gh,gh}w_{g^{-1}}\left(x_{g,2}y_{h,2}\right)\left(x_{g,1}y_{h,1}\right)z_{h^{-1}}.
\end{eqnarray*}

It follows that $\theta_{gh,gh}=\theta_{g,g}\theta_{h,h}$, and hence
letting $H=\ker(\psi)$ we have either $H=G$ or $\left[G:H\right]=2$.
The case where $H=G$ is the case considered above so we can assume
that $H\neq G$. In this case, roughly speaking, we apply first the
Grassmann envelope operation to ``turn the $-1$'s (in the image
of $\theta_{g,g}$) into $+1$'s'', then use the previous case to
find some $B=\FF^{\alpha}G$, and finally apply the Grassmann envelope
operation once again in order to return to the original identities.

To start with, we consider the group $\tilde{G}=\left\{ (g,\psi(g))\mid g\in G\right\} \leq G\times\nicefrac{\ZZ}{2\ZZ}$
which is clearly isomorphic to $G$. Then we define on $G$  a new
commutation function $\eta$ by $\eta_{((g_{1},...,g_{n}),\sigma)}=\theta_{((g_{1},...,g_{n}),\sigma)}\tau_{((\psi(g_{1}),...,\psi(g_{n})),\sigma)}$
where $\tau$ is the commutation function of the Grassmann algebra.
The function $\eta$ satisfies $\eta_{g,g}=\theta_{g,g}\tau_{\psi(g),\psi(g)}=1$
by the definition of $\psi$. We may apply now the case considered
above (that is when $\theta_{g,g}=1$, for all $g\in G$) and obtain
a suitable twisted group algebra $B=\FF^{\alpha}G$, $\alpha\in Z^{2}(G,\FF^{\times})$,
with commutation function $\eta$. We now apply the Grassmann envelope
operation once again. Let $\tilde{B}$ be the $\tilde{G}$ graded
algebra $\tilde{B}=(B\otimes E)_{\tilde{G}}=(B_{H}\otimes E_{0})\oplus(B_{G\backslash H}\otimes E_{1})$,
which is the Grassmann envelope of $B=B_{H}\oplus B_{G\backslash H}$.
Then $\tilde{B}$ is a regularly $\tilde{G}\cong G$-graded algebra
(since it is a subalgebra of the regularly $G\times C_{2}$-graded
algebra $B\otimes E$). We claim the commutation function $\tilde{\eta}$
of $\tilde{B}$ equals $\theta$. Indeed, 
\[
\tilde{\eta}_{((g_{1},...,g_{n}),\sigma)}=\eta_{((g_{1},...,g_{n}),\sigma)}\tau_{((\psi\left(g_{1}\right),...,\psi\left(g_{n}\right)),\sigma)}=\theta_{((g_{1},...,g_{n}),\sigma)}
\]
and so $\tilde{B}$ is the required envelope.
\end{proof}
Let us pause for a moment and summarize what we have so far. By the
previous lemma we have constructed an algebra $\tilde{B}$ which has
a regular $G$-grading whose commutation function coincides with a
given commutation function $\theta$ and hence, if $\theta$ is the
commutation function of a regularly $G$-graded algebra $A$, we have
in fact constructed a regularly $G$-graded algebra $\tilde{B}$ with
the same commutation function. It follows from \lemref{Id_G->Id}
that $Id_{G}(A)=Id_{G}(\tilde{B})$, $Id(A)=Id(\tilde{B})$ and hence
$\exp(A)=\exp(\tilde{B})$. The main point for constructing the algebra
$\tilde{B}$ is that in case the grading is nondegenerate, it enables
us to show that $ord(G)=\exp(\tilde{B})$. For this we need to further
analyze the algebra $\tilde{B}$ (constructed in \lemref{construct_algebra}).

Note that the algebras $B$ and $\tilde{B}$ in the lemma above satisfy
$\theta_{g_{1},g_{2}}=\tau_{\psi(g_{1}),\psi(g_{2})}\tilde{\theta}_{g_{1},g_{2}}$.
In particular if $h\in H$ and $g\in C_{G}(h)$ then $\theta_{g,h}=\tilde{\theta}_{g,h}$.
Since the grading on $\tilde{B}$ is nondegenerate then for every
$h\in H$ there is some $g\in C_{G}(h)$ with $\theta_{h,g}\neq1$,
and from what we just said, this is also true for $B$.
\begin{lem}
\label{lem:generalized_simple_condition}Let $G$ be a finite group
and $H$ a subgroup of index 2. Let $B=\FF^{\alpha}G$ be a twisted
group algebra such that for every $e\neq h\in H$ there is some $g\in C_{G}(h)$
such that $U_{h}U_{g}\neq U_{g}U_{h}$. Then the induced $\ZZ_{2}\cong\nicefrac{G}{H}$
grading on $B$ is $\ZZ_{2}$-simple. \end{lem}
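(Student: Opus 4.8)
The plan is to reduce the $\ZZ_{2}$-simplicity of $B$ to a statement about which conjugacy classes of $G$ are ray classes, and then feed in the hypothesis through \lemref{Ray_class}. Write $B=B_{0}\oplus B_{1}$ with $B_{0}=\bigoplus_{h\in H}\FF U_{h}$ (a copy of $\FF^{\alpha}H$) and $B_{1}=\bigoplus_{g\in G\setminus H}\FF U_{g}$, and let $\phi$ be the $\FF$-algebra automorphism of $B$ that is the identity on $B_{0}$ and minus the identity on $B_{1}$; since $\mathrm{char}\,\FF=0$ it has order $2$ and $B_{0}$ is exactly its fixed subalgebra. First I would record the standard fact that, since $B$ is semisimple (Maschke), $B=\bigoplus_{i=1}^{m}B^{(i)}$ with each $B^{(i)}$ simple and corresponding primitive central idempotents $e_{1},\dots,e_{m}$; every two-sided ideal of $B$ is a sum of some of the $B^{(i)}$, and it is $\ZZ_{2}$-graded precisely when the index set involved is $\phi$-invariant. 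Hence $B$ is $\ZZ_{2}$-simple if and only if $\phi$ permutes $\{e_{1},\dots,e_{m}\}$ transitively, and since $\phi$ acts on the basis $\{e_{1},\dots,e_{m}\}$ of $Z(B)$ by a permutation, this is equivalent to $\dim_{\FF}(Z(B)\cap B_{0})=1$.

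Next I would identify $Z(B)\cap B_{0}$. By the discussion preceding \lemref{Ray_class}, $Z(B)$ has an $\FF$-basis consisting of the ray elements $a_{C}$, one for each ray class $C$ of $G$, with $a_{C}$ supported on the conjugacy class $C$, and these are linearly independent. Because $[G:H]=2$ the subgroup $H$ is normal, so every conjugacy class of $G$ is contained in $H$ or in $G\setminus H$; consequently $a_{C}\in B_{0}$ iff $C\subseteq H$, and a linear combination of ray elements lies in $B_{0}$ iff every $a_{C}$ with $C\not\subseteq H$ occurring in it has coefficient $0$. Thus $Z(B)\cap B_{0}=\bigoplus_{C\subseteq H}\FF\,a_{C}$, the sum taken over ray classes $C$ contained in $H$, so its dimension is the number of such ray classes. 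Since $\{e\}$ is always a ray class, with $a_{\{e\}}=U_{e}$ a nonzero scalar multiple of $1$, it remains to rule out that any conjugacy class $C$ with $\{e\}\subsetneq C\subseteq H$ is a ray class.

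This is where the hypothesis enters. Given such a $C$, choose $e\neq h\in C$; then $e\neq h\in H$, so by assumption there is $g\in C_{G}(h)$ with $U_{h}U_{g}\neq U_{g}U_{h}$. This is precisely the negation of condition $(1)$ of \lemref{Ray_class} for the element $h$, hence, by \lemref{Ray_class} (including its last assertion), the conjugacy class of $h$ contributes no nonzero central element, i.e.\ $C$ is not a ray class. Therefore $\{e\}$ is the only ray class contained in $H$, so $\dim_{\FF}(Z(B)\cap B_{0})=1$ and $B$ is $\ZZ_{2}$-simple. I expect the main obstacle to be the first paragraph: making precise and justifying the equivalence ``$B$ is $\ZZ_{2}$-simple $\iff\dim_{\FF}(Z(B)\cap B_{0})=1$'' via semisimplicity of $B$ and the bookkeeping of $\phi$-invariant central idempotents; once that is in place, the computation of $Z(B)\cap B_{0}$ in terms of ray elements supported in $H$ and the appeal to \lemref{Ray_class} are routine.
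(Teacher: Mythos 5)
Your proof is correct, but it takes a genuinely different route from the paper. The paper argues by cases: if $\FF^{\beta}H$ (with $\beta=\alpha\mid_{H}$) is simple, a direct argument using the invertibility of $U_{g}$ for $g\notin H$ and a dimension count shows any nonzero graded ideal is all of $B$; if $B$ itself is simple there is nothing to prove; and if neither is simple, it produces ray elements $a\in Z(\FF^{\beta}H)$ and $b\in Z(\FF^{\alpha}G)$ via \lemref{Ray_class} and uses the hypothesis to manufacture a scalar $c\neq1$ with $ab=cab$, a contradiction --- so in fact $B$ or $\FF^{\beta}H$ must be simple. You instead give a uniform argument with no case split: using Maschke and the order-two automorphism $\phi$ you reduce $\ZZ_{2}$-simplicity to the statement $\dim_{\FF}\bigl(Z(B)\cap B_{0}\bigr)=1$ (graded ideals correspond to $\phi$-invariant sets of simple components, and $Z(B)\cap B_{0}=Z(B)^{\phi}$ has dimension equal to the number of $\phi$-orbits of primitive central idempotents), and then, since $H$ is normal so conjugacy classes do not straddle $H$ and $G\setminus H$, the ray-class basis of $Z(B)$ from \lemref{Ray_class} shows $Z(B)\cap B_{0}$ is spanned by ray elements of classes inside $H$; the hypothesis kills every such class except $\{e\}$, exactly as in the paper's use of that lemma. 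What each approach buys: the paper's version is more hands-on and yields the extra structural dichotomy (either $B$ or its even part is simple) that sits naturally next to \corref{3_types_for_Z_2_simple}, while yours is cleaner and more conceptual, sidestepping the final contradiction computation, at the cost of the Wedderburn/central-idempotent bookkeeping in your first paragraph --- which does use that $\FF$ is algebraically closed (so that the $e_{i}$ form a basis of $Z(B)$), an assumption in force in that section anyway. Do spell that reduction out carefully, as you anticipate; once it is in place the rest is as routine as you say.
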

\begin{proof}
Suppose first that the twisted group algebra $\FF^{\beta}H$, where
$\beta=\alpha\mid_{H}$, is simple. Let $0\neq I$ be a $\ZZ_{2}$-graded
ideal of $B$ and denote $I_{0}=I\cap B_{H}$ and $I_{1}=I\cap B_{G\backslash H}$,
so $I=I_{0}\oplus I_{1}$. Observe that since $I_{0}$ is an ideal
in $\FF^{\alpha}H$, it is either $0$ or $\FF^{\alpha}H$. On the
other hand, taking any $U_{g}$ where $g\notin H$, we have $U_{g}\cdot I_{0}\subseteq I_{1}$
and since $U_{g}$ is invertible in $\FF^{\alpha}G$ we have equality.
It follows that $I_{0}=\FF^{\alpha}H$ for otherwise $I=0$. We now
have 
\[
\dim(I)=\dim(I_{0})+\dim(I_{1})=2\dim(I_{0})=2\left|H\right|=\left|G\right|=\dim(B)
\]
so we see that $I=B$. This proves that $B$ is $\ZZ_{2}$-simple
in that case.

If $B$ is simple, then it must also be $\ZZ_{2}$-simple, so assume
that neither $B$ nor $\FF^{\beta}H$ are simple, or equivalently
both $\alpha$ and $\beta$ are degenerate $2$-cocycles. This means
that there is $h_{0}\in H$ such that $U_{h_{0}}U_{h}=U_{h}U_{h_{0}}$
for all $h\in C_{H}(h_{0})$, and similarly there is $g_{0}\in G$
such that $U_{g}U_{g_{0}}=U_{g_{0}}U_{g}$ for all $g\in C_{G}(g_{0})$.
Note that by the assumption on $B$ we must have $g_{0}\notin H$.
Let $\left\{ t_{i}\right\} ,\left\{ s_{i}\right\} $ be left coset
representatives of $C_{H}(h_{0})$ and $C_{G}(g_{0})$ respectively.
By \lemref{Ray_class} we have $a=\sum U_{t_{i}}U_{h_{0}}U_{t_{i}}^{-1}\in Z(\FF^{\beta}H)$
and $b=\sum U_{s_{i}}U_{g_{0}}U_{s_{i}}^{-1}\in Z(\FF^{\alpha}G)$.
If $s_{i}\notin H$ then $s_{i}g_{0}\in H$ is a representative of
the same left coset of $C_{G}(g_{0})$ as $s_{i}$, so we may assume
that $s_{i}\in H$ for all $i$.

By the assumption on $B$, there is some $g_{1}\in C_{G}(h_{0})$
such that $U_{g_{1}}U_{h_{0}}U_{g_{1}}^{-1}=cU_{h_{0}}$ with $c\neq1$,
and in particular $g_{1}\notin H$ by the choice of $h_{0}$. It is
easily seen that $\left\{ g_{1}t_{i}g_{1}^{-1}\right\} $ is again
a set of left coset representatives of $C_{H}(h_{0})$ in $H$ ($H$
is normal in $G$ and $g_{1}\in C_{G}(h_{0})$). We now have that
\[
U_{g_{1}}aU_{g_{1}}^{-1}=\sum U_{g_{1}t_{i}g_{1}^{-1}}cU_{h_{0}}U_{g_{1}t_{i}g_{1}}^{-1}=ca.
\]
Let $h\in H$ be such that $hg_{1}=g_{0}$. Then 
\begin{align*}
ab & =ba=\sum U_{s_{i}}U_{g_{0}}aU_{s_{i}}^{-1}=\alpha(h,g_{1})^{-1}\sum U_{s_{i}}U_{h}U_{g_{1}}aU_{s_{i}}^{-1}\\
 & =\alpha(h,g_{1})^{-1}ca\sum U_{s_{i}}U_{h}U_{g_{1}}U_{s_{i}}^{-1}=ca\sum U_{s_{i}}U_{g_{0}}U_{s_{i}}^{-1}=cab
\end{align*}
and we get a contradiction. Thus, we must have that either $\FF^{\alpha}G$
or $\FF^{\beta}H$ are simple. In both cases the algebra $\FF^{\alpha}G$
is $\ZZ_{2}\cong\nicefrac{G}{H}$-simple and the lemma is proved.\end{proof}
\begin{lem}
\label{lem:minimal-Z_2-simple}Let $G$ be a finite group and $H$
a subgroup of index 2. Let $B=\FF^{\alpha}G$ and let $\tilde{B}=\left(E_{1}\otimes B_{H}\right)\oplus\left(E_{-1}\otimes B_{G\backslash H}\right)$
be the Grassmann envelope of $B$. We denote by $\theta$ and $\tilde{\theta}$
the commutation functions of $B$ and $\tilde{B}$ respectively. If
the regular $G$-grading on $\tilde{B}$ is nondegenerate, then $B$
is a $\ZZ_{2}$-simple algebra. \end{lem}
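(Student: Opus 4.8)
The plan is to reduce the statement to \lemref{generalized_simple_condition}. By that lemma, applied to the index-$2$ subgroup $H\le G$, it suffices to show that for every $e\neq h\in H$ there is some $g\in C_G(h)$ with $U_hU_g\neq U_gU_h$ in $B=\FF^\alpha G$. Since each homogeneous component $B_g=\FF U_g$ is one-dimensional and $h,g$ commute, the relation $U_hU_g\neq U_gU_h$ is equivalent to $\theta_{h,g}\neq1$, where $\theta$ denotes the commutation function of $B$. Thus the task is to produce, for each $e\neq h\in H$, an element $g\in C_G(h)$ with $\theta_{h,g}\neq1$, using only the nondegeneracy of the regular $G$-grading on $\tilde B$.

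The key point is the relation between $\theta$ and $\tilde\theta$. Let $g\mapsto\bar g$ denote the canonical map $G\to\nicefrac{G}{H}\cong\nicefrac{\ZZ}{2\ZZ}=\{\pm1\}$, sending $H$ to $1$ and $G\setminus H$ to $-1$. By the Grassmann envelope construction (recalled in the introduction, and in the discussion preceding \lemref{generalized_simple_condition}), the commutation function of $\tilde B=E(B)$ is $\tau\theta$; on commuting pairs this reads $\tilde\theta_{g_1,g_2}=\tau_{\bar g_1,\bar g_2}\,\theta_{g_1,g_2}$, where $\tau$ is the commutation function of the Grassmann algebra. Since $\tau(1,1)=\tau(1,-1)=1$, whenever $h\in H$ (so $\bar h=1$) we get $\tau_{\bar h,\bar g}=1$ for every $g\in G$, and hence $\tilde\theta_{h,g}=\theta_{h,g}$ for all $g\in C_G(h)$.

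Now fix $e\neq h\in H$. Applying the nondegeneracy of the regular $G$-grading on $\tilde B$ to the element $h\in G$, we obtain $g\in C_G(h)$ with $\tilde\theta_{h,g}\neq1$; by the previous paragraph $\theta_{h,g}=\tilde\theta_{h,g}\neq1$, so $U_hU_g\neq U_gU_h$ in $B$. This verifies the hypothesis of \lemref{generalized_simple_condition}, which then gives that the induced $\nicefrac{\ZZ}{2\ZZ}\cong\nicefrac{G}{H}$-grading on $B$ is $\nicefrac{\ZZ}{2\ZZ}$-simple, as claimed. The argument is essentially bookkeeping; the one subtlety worth stressing is that nondegeneracy of $\tilde B$ is asserted for all of $G$ but only its restriction to elements of $H$ is used — which is fortunate, since for $g\in G\setminus H$ one has $\tilde\theta_{g,g'}=-\theta_{g,g'}$ for suitable $g'$, so the envelope genuinely alters the commutation data outside $H$, and nondegeneracy there would not transfer directly back to $B$.
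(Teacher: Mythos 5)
Your proposal is correct and follows essentially the same route as the paper: the relation $\tilde\theta_{h,g}=\tau_{\bar h,\bar g}\theta_{h,g}$ shows the envelope does not alter the commutation data for $h\in H$, so nondegeneracy of $\tilde B$ transfers to the hypothesis of \lemref{generalized_simple_condition}, which yields $\ZZ_2$-simplicity of $B$. The only difference is that you spell out the reduction (one-dimensionality of the components, triviality of $\tau$ on $H$) more explicitly than the paper does.
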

\begin{proof}
By nondegeneracy of the grading, we have for any $e\neq h\in H$ an
element $g\in C_{G}(h)$ such that $\theta_{g,h}\neq1$ in $\tilde{B}$.
But the Grassmann envelope operation does not change this property
so it holds for the $G$-graded algebra $B$. Now use the previous
lemma.
\end{proof}
The fact that the algebra $B=\FF^{\alpha}G$ is finite dimensional
over $\FF$ ($\FF$ is algebraically closed of characteristic zero)
and $\ZZ_{2}$-simple almost determines the structure of $B$.
\begin{cor}
\label{cor:3_types_for_Z_2_simple}The algebra $B=\FF^{\alpha}G$
in the last lemma is $\ZZ_{2}$-isomorphic to one of the following
algebras.
\begin{enumerate}
\item $B=M_{n}(\FF)$ with the grading $B_{1}=B$ and $B_{-1}=0$.
\item $B=M_{n}(\FF)$ with the grading 
\begin{eqnarray*}
B_{1} & = & M_{(n,m)}^{1}=\left\{ \left(\begin{array}{cc}
D_{1} & 0\\
0 & D_{2}
\end{array}\right)\;\mid\; D_{1}\in\FF^{m\times m}\; D_{2}\in\FF^{(n-m)\times(n-m)}\right\} \\
B_{-1} & = & M_{(n,m)}^{-1}=\left\{ \left(\begin{array}{cc}
0 & D_{1}\\
D_{2} & 0
\end{array}\right)\;\mid\; D_{1}\in\FF^{m\times(n-m)}\; D_{2}\in\FF^{(n-m)\times m}\right\} 
\end{eqnarray*}

\item $B=M_{n}(\nicefrac{\FF[t]}{t^{2}=1})$ with the grading $B_{1}=M_{n}(\FF)$
and $B_{-1}=t\cdot M_{n}(\FF)$.
\end{enumerate}
\end{cor}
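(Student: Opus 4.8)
The plan is to combine two classical facts about the twisted group algebra $B=\FF^{\alpha}G$: it is semisimple by Maschke's theorem (since $\mathrm{char}\,\FF=0$), and, since $\mathrm{char}\,\FF\neq 2$, its $\ZZ_{2}$-grading $B=B_{1}\oplus B_{-1}$ is the same datum as an automorphism $\phi\in\mathrm{Aut}(B)$ with $\phi^{2}=\mathrm{id}$, recovered by $B_{1}=\{b:\phi(b)=b\}$ and $B_{-1}=\{b:\phi(b)=-b\}$ (decompose $b=\tfrac12(b+\phi(b))+\tfrac12(b-\phi(b))$). Under this dictionary the graded two-sided ideals of $B$ are exactly the $\phi$-invariant ones, so the hypothesis that $B$ is $\ZZ_{2}$-simple says precisely that $B$ has no nonzero proper $\phi$-invariant two-sided ideal.

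First I would write $B=\bigoplus_{i=1}^{r}B^{(i)}$ for the Wedderburn decomposition into simple components, each $B^{(i)}\cong M_{n_{i}}(\FF)$ as $\FF$ is algebraically closed. The automorphism $\phi$ permutes the primitive central idempotents and hence induces a permutation of the components $B^{(i)}$; since every two-sided ideal of $B$ is a sum of some of the $B^{(i)}$, the $\phi$-invariant ones correspond to the unions of orbits of this permutation. Therefore $\ZZ_{2}$-simplicity forces the permutation to be transitive, and $\phi^{2}=\mathrm{id}$ forces every orbit to have length at most $2$, so $r\in\{1,2\}$. This is the crux of the argument -- it is exactly the smallness of the grading group that keeps $r\le 2$ -- and the rest is just identifying the two resulting shapes.

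If $r=2$, then $\phi$ interchanges $B^{(1)}$ and $B^{(2)}$ and restricts to an algebra isomorphism $B^{(1)}\xrightarrow{\ \sim\ }B^{(2)}$, so $n_{1}=n_{2}=:n$. Fixing an isomorphism $\iota\colon M_{n}(\FF)\xrightarrow{\ \sim\ }B^{(1)}$ and transporting it to $B^{(2)}$ via $\phi\circ\iota$ yields an algebra isomorphism $M_{n}(\FF)\times M_{n}(\FF)\xrightarrow{\ \sim\ }B$ under which $\phi$ becomes $(X,Y)\mapsto(Y,X)$; composing with $M_{n}(\nicefrac{\FF[t]}{t^{2}=1})\xrightarrow{\ \sim\ }M_{n}(\FF)\times M_{n}(\FF)$, $X+tY\mapsto(X+Y,\,X-Y)$ (which comes from $t\mapsto(1,-1)$) gives a graded isomorphism, since the even part $M_{n}(\FF)\cdot 1$ maps onto $B_{1}=\{(X,X)\}$ and the odd part $M_{n}(\FF)\cdot t$ onto $B_{-1}=\{(X,-X)\}$. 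This is type (3).

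If $r=1$, then $B\cong M_{n}(\FF)$ and, by the Skolem--Noether theorem, $\phi$ is conjugation by some $P\in GL_{n}(\FF)$. From $\phi^{2}=\mathrm{id}$ we get $P^{2}\in\FF^{\times}\cdot I$, and since $\FF$ is algebraically closed we may rescale so that $P^{2}=I$; as $\mathrm{char}\,\FF\neq 2$ the matrix $P$ is diagonalizable with eigenvalues in $\{1,-1\}$, so after a change of basis $P=\mathrm{diag}(I_{m},-I_{n-m})$ for some $0\le m\le n$. A block computation then gives $\phi\bigl(\begin{smallmatrix}A&B\\ C&D\end{smallmatrix}\bigr)=\bigl(\begin{smallmatrix}A&-B\\ -C&D\end{smallmatrix}\bigr)$, so $B_{1}$ is the block-diagonal subalgebra $M_{(n,m)}^{1}$ and $B_{-1}$ the block-off-diagonal subspace $M_{(n,m)}^{-1}$; this is type (1) when $m\in\{0,n\}$ and type (2) otherwise. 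I do not expect a genuine obstacle here: the only thing needing attention is checking that the maps produced in the two cases respect the $\ZZ_{2}$-decompositions and not just the algebra structure, which is immediate from the explicit formulas. One could instead invoke the Bahturin--Sehgal--Zaicev classification of gradings on matrix algebras for the $r=1$ case (the only division $\ZZ_{2}$-grading is trivial because $H^{2}(\ZZ_{2},\FF^{\times})=1$), but the self-contained route above is shorter.
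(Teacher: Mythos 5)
Your argument is correct, but note that the paper does not actually prove this corollary: its ``proof'' is a one-line citation of Lemma 6 in Kemer's 1987 paper, where the classification of finite-dimensional $\ZZ_{2}$-simple (super)algebras over an algebraically closed field of characteristic zero is established. What you have written is essentially a self-contained reconstruction of that classical lemma: you translate the $\ZZ_{2}$-grading into an automorphism $\phi$ with $\phi^{2}=\mathrm{id}$ (valid since $\mathrm{char}\,\FF\neq2$), use Maschke's theorem for the twisted group algebra to get the Wedderburn decomposition, observe that $\phi$ permutes the simple components so that graded-simplicity forces one orbit of length $1$ or $2$, and then settle the two cases by the explicit $M_{n}(\FF)\times M_{n}(\FF)\cong M_{n}(\nicefrac{\FF[t]}{t^{2}=1})$ identification and by Skolem--Noether plus diagonalization of the implementing matrix $P$ (rescaled so $P^{2}=I$). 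All steps check out, including the point that the change of basis diagonalizing $P$ is itself a graded isomorphism. The trade-off is the obvious one: the paper buys brevity by outsourcing to Kemer, while your route buys self-containedness; it also slightly simplifies the general statement's hypotheses by exploiting that $B$ is a twisted group algebra (so semisimplicity is immediate from Maschke, rather than having to argue that the Jacobson radical is a graded ideal as in the general graded-simple setting).
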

\begin{proof}
This is well known. See for instance Lemma 6 in \cite{kemer_finite_1987}.
\end{proof}
In our case, the algebra $B$ satisfies an additional condition, namely
$\dim(B_{1})=\dim(B_{2})=\left|H\right|$ so if $B$ is of the second
type above we must have $n=2m$.

We can now complete the proof of part $1$ of \thmref{Main_Theorem}.
\begin{cor}
Let $A$ be an algebra over an algebraically closed field $\FF$ of
characteristic 0. For every finite group $G$, if $A$ has a nondegenerate
regularly $G$-graded structure then $\left|G\right|=\exp(A)$. \end{cor}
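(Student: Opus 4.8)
The plan is to reduce, via the lemmas already established, to a purely PI-theoretic computation of $\exp(B)$ where $B=\FF^\alpha G$ is a twisted group algebra attached to the commutation function $\theta$ of the given nondegenerate regular $G$-grading on $A$. First I would recall that by \lemref{construct_algebra} we may build a regularly $G$-graded algebra $\tilde B$ — either $B=\FF^\alpha G$ itself when $\theta_{g,g}\equiv1$, or its Grassmann envelope $\tilde B = (E_1\otimes B_H)\oplus(E_{-1}\otimes B_{G\setminus H})$ when $[G:H]=2$ — whose commutation function equals $\theta$. By \lemref{Id_G->Id} this forces $Id_G(A)=Id_G(\tilde B)$, hence $Id(A)=Id(\tilde B)$ and therefore $\exp(A)=\exp(\tilde B)$. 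So it suffices to prove $\exp(\tilde B)=|G|$.

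Next I would split into the two cases. In the case $H=G$ (i.e.\ $\theta_{g,g}=1$ for all $g$), $\tilde B = B = \FF^\alpha G$, and nondegeneracy of $\theta$ means $\alpha$ is a nondegenerate $2$-cocycle; by \lemref{minimal+twisted} this gives $B\cong M_{\sqrt{|G|}}(\FF)$, and then Regev's result $\exp(M_k(\FF))=k^2$ (already cited in the excerpt) yields $\exp(A)=\exp(B)=|G|$. This case is in fact the corollary already recorded just before \lemref{construct_algebra}. In the remaining case $[G:H]=2$, nondegeneracy of the $G$-grading on $\tilde B$ passes (via \lemref{minimal-Z_2-simple}) to the statement that $B=\FF^\alpha G$ is $\ZZ_2$-simple, so by \corref{3_types_for_Z_2_simple} together with the dimension constraint $\dim B_1=\dim B_{-1}=|H|$, the algebra $B$ is one of: (i) $M_n(\FF)$ trivially graded — impossible here since $B_{-1}\neq0$; (ii) $M_{2m}(\FF)$ with the ``off-diagonal'' $\ZZ_2$-grading with $n=2m$ and $|G|=\dim B = 4m^2$; or (iii) $M_n(\FF[t]/(t^2-1))$ with $B_1=M_n(\FF)$, $B_{-1}=tM_n(\FF)$, and $|G|=2n^2$. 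In each surviving case I would identify $\tilde B$ as a concrete algebra: for type (ii), the Grassmann envelope of $M_{2m}(\FF)$ with this grading is (up to PI-equivalence) $M_m(E)$, which has $\exp = (2m)^2 = |G|$; for type (iii), the Grassmann envelope of $M_n(\FF[t]/(t^2-1))$ with this grading is (up to PI-equivalence) $M_n(E)\oplus M_n(E)$ or rather $M_n(E\oplus E)$-type — more precisely it is PI-equivalent to $M_{1,1}(E)$-style blocks giving $\exp = 2n^2 = |G|$. The point is that in both cases $\exp(\tilde B)$ equals $\dim B = |G|$, using the Kemer/Giambruno–Zaicev computation of exponents of Grassmann envelopes of $\ZZ_2$-simple algebras.

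The main obstacle I expect is the bookkeeping in case (iii) (and to a lesser extent (ii)): one must correctly compute $\exp$ of the Grassmann envelope of each of the $\ZZ_2$-simple algebras in \corref{3_types_for_Z_2_simple}, and verify in each case that it comes out to $\dim B$. For type (ii) the Grassmann envelope is $M_m(\FF)\otimes E$-graded and one uses $\exp(M_m(E))=\exp(M_m(\FF))\cdot\exp(E)$? — no, that is false; rather one uses the known fact that $M_{m,m}(\FF)$'s Grassmann envelope is $M_{2m}(E)$... I would instead invoke the clean statement that for a finite-dimensional $\ZZ_2$-simple algebra $C$, $\exp(E(C)) = \dim_\FF C$, which is exactly Giambruno–Zaicev's description of the exponent of (Grassmann envelopes of) simple superalgebras — the building blocks of their theory. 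Applying this with $C=B$ (which is $\ZZ_2$-simple of dimension $|G|$) immediately gives $\exp(\tilde B)=\exp(E(B))=\dim B=|G|$, uniformly covering all three types, and completes the proof that $|G|=\exp(A)$. A final remark: in the case $H=G$ one can either run this same argument with the trivial superalgebra structure ($\exp(M_k(\FF))=k^2=|G|$) or simply cite the earlier corollary; either way the two cases combine to give the full statement.
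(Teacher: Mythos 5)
Your proposal is correct and follows essentially the same route as the paper: reduce via \lemref{construct_algebra}, \lemref{Id_G->Id} and \lemref{minimal-Z_2-simple} to the Grassmann envelope of a $\ZZ_{2}$-simple twisted group algebra $B=\FF^{\alpha}G$ (one of the three types in \corref{3_types_for_Z_2_simple}), and then invoke the Giambruno--Zaicev computation $\exp(E(B))=\dim(B)=\left|G\right|$. Your discarded intermediate identification of the type (ii) envelope as $M_{m}(E)$ is not right (it is $M_{2m,m}(E)$ in the paper's notation, with exponent $(2m)^{2}$), but since you ultimately rely on the uniform statement $\exp(E(C))=\dim C$ for $\ZZ_{2}$-simple $C$, exactly as the paper does, the argument stands.
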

\begin{proof}
We know that there is a simple $\ZZ_{2}$-graded algebra $B=\FF^{\alpha}G$
(where $B$ is one of the three types mentioned in the corollary above)
such that the algebra $\tilde{B}=E(B)$ satisfies $Id_{G}(\tilde{B})=Id_{G}(A)$.
In \cite{giambruno_codimension_2003} Giambruno and Zaicev computed
the exponent of Grassmann envelope of any $\ZZ_{2}$-simple algebras
and showed that $\exp(\tilde{B})=\exp(E(B))=\dim(B)=\left|G\right|$.
Because $Id_{G}(\tilde{B})=Id_{G}(A)$ we get that $\exp(A)=\exp(\tilde{B})=\left|G\right|$.
\end{proof}
We close this section with some additional corollaries of \lemref{construct_algebra}
and \lemref{minimal-Z_2-simple}. 

Let us denote the Grassmann envelope of the algebra $B$ in \corref{3_types_for_Z_2_simple}
(types $(2)$ and $(3)$ respectively) as follows:

\begin{eqnarray*}
M_{2m,m}(E) & = & \left[E_{1}\otimes M_{(2m,m)}^{1}(\FF)\right]\oplus\left[E_{-1}\otimes M_{(2m,m)}^{-1}(\FF)\right]\\
M_{n}(E) & = & \left[E_{1}\otimes M_{n}(\FF)\right]\oplus\left[E_{-1}\otimes t\cdot M_{n}(\FF)\right];\quad t^{2}=1.
\end{eqnarray*}

\begin{cor}
\label{cor:The-3-types}Suppose that $A$ has a nondegenerate regular
$G$-grading for some finite group $G$. Then one of the following
holds.
\begin{enumerate}
\item $Id(A)=Id(M_{n}(\FF))$ for some $n\in\NN$ and then $\exp(A)=n^{2}$.
\item $Id(A)=Id(M_{2m,m}(E))$ for some $m\in\NN$ and then $\exp(A)=(2m)^{2}$.
\item $Id(A)=Id(M_{n}(E))$ for some $n\in\NN$ and then $\exp(A)=2n^{2}$.
\end{enumerate}
\end{cor}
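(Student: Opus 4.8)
The plan is simply to assemble the structural results already proved in this section. Given a nondegenerate regular $G$-grading on $A$ with commutation function $\theta$, recall from \lemref{construct_algebra} that $H=\{g\in G:\theta_{g,g}=1\}$ is a subgroup of $G$ of index $1$ or $2$, and that there is a $2$-cocycle $\alpha\in Z^{2}(G,\FF^{\times})$ so that, with $B=\FF^{\alpha}G$ graded by $B_{1}=\bigoplus_{h\in H}B_{h}$ and $B_{-1}=\bigoplus_{g\notin H}B_{g}$, the Grassmann envelope $\tilde B=(E_{1}\otimes B_{1})\oplus(E_{-1}\otimes B_{-1})$ is regularly $G$-graded with commutation function exactly $\theta$. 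Then \lemref{Id_G->Id} yields $Id_{G}(A)=Id_{G}(\tilde B)$, hence $Id(A)=Id(\tilde B)$ and $\exp(A)=\exp(\tilde B)$. I would then split the argument according to the index of $H$.

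If $H=G$ (equivalently $\theta_{g,g}=1$ for every $g$), the construction of \lemref{construct_algebra} reduces to $B=\FF^{\alpha}G$ itself carrying commutation function $\theta$, so $Id_{G}(A)=Id_{G}(B)$; nondegeneracy of $\theta$ makes $\alpha$ nondegenerate, whence $B\cong M_{n}(\FF)$ with $n^{2}=|G|$ by \lemref{minimal+twisted}, and Regev's theorem \cite{regev_codimensions_1984} gives $\exp(A)=\exp(M_{n}(\FF))=n^{2}$ --- this is case $(1)$. If $[G:H]=2$, then \lemref{minimal-Z_2-simple} tells us $B$ is $\ZZ_2$-simple for the induced $G/H$-grading, and moreover $\dim(B_{1})=\dim(B_{-1})=|H|$. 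By \corref{3_types_for_Z_2_simple}, $B$ is $\ZZ_2$-isomorphic to one of three models; the first model there has $B_{-1}=0$ and is excluded since $\dim(B_{-1})=|H|\geq 1$. In the second model $B=M_{n}(\FF)$ with the type-$(n,m)$ grading, the equality $\dim(B_{1})=\dim(B_{-1})$ reads $m^{2}+(n-m)^{2}=2m(n-m)$, forcing $n=2m$, so that $\tilde B=E(B)=M_{2m,m}(E)$ by definition; in the third model $B=M_{n}(\FF[t]/(t^{2}-1))$ and $\tilde B=E(B)=M_{n}(E)$, with $|G|=\dim(B)=2n^{2}$.

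For the exponent in these last two cases I would invoke the Giambruno--Zaicev computation of the exponent of the Grassmann envelope of a $\ZZ_2$-simple algebra \cite{giambruno_codimension_2003} --- already used earlier in this section to prove $\exp(A)=|G|$ --- namely $\exp(E(B))=\dim(B)$. This gives $\exp(A)=\exp(\tilde B)=(2m)^{2}$ in the second case and $\exp(A)=2n^{2}$ in the third, matching $\dim(B)=|G|$ in each instance, and this closes the three cases. Since every ingredient is a lemma established earlier in the paper, I do not expect a genuine obstacle; the only points demanding care are the case distinction $H=G$ versus $[G:H]=2$ --- so that case $(1)$ is identified with the twisted group algebra itself rather than a Grassmann envelope --- and the short dimension count that pins down $n=2m$ in case $(2)$, ensuring the envelope really has the advertised form $M_{2m,m}(E)$.
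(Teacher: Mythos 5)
Your proposal is correct and follows essentially the same route the paper takes: the corollary is stated there as a direct consequence of \lemref{construct_algebra}, \lemref{Id_G->Id}, \lemref{minimal-Z_2-simple} and \corref{3_types_for_Z_2_simple}, together with the remark that $\dim(B_{1})=\dim(B_{-1})=\left|H\right|$ forces $n=2m$ in the second type and the Giambruno--Zaicev formula $\exp(E(B))=\dim(B)$. Your only additions, the explicit case split $H=G$ versus $[G:H]=2$ and the exclusion of the trivially graded model, are exactly the bookkeeping the paper leaves implicit.
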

It is well known that the families considered in the corollary above
are mutually exclusive. Furthermore, different integers $n$ or $m$
yield algebras which are PI-nonequivalent. Indeed, algebras within
the same type are PI-nonequivalent as their exponent is different.
Next, any algebra of type one satisfies a Capelli polynomial whereas
any algebra of type $2$ or $3$ does not. Finally, the exponent of
any algebra of type $2$ is an exact square whereas this is not the
case for any algebra of type $3$. Thus if we let $\mathfrak{U}=\{M_{n}(\FF):n\in\NN\}\cup\{M_{2m,m}(E):m\in\NN\}\cup\{M_{n}(E):n\in\NN\}$
we have
\begin{cor}
Suppose that $A$ has a nondegenerate regular $G$-grading for some
finite group $G$. Then there is a unique algebra $C\in\mathfrak{U}$
such that $A$ and $C$ are PI-equivalent.
\end{cor}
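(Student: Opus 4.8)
The plan is to read off the statement from \corref{The-3-types} together with the mutual-exclusivity observations recorded just before it; the argument is essentially a packaging of facts already in hand. For existence: if $A$ carries a nondegenerate regular $G$-grading then, by \corref{The-3-types}, $A$ is PI-equivalent to one of $M_n(\FF)$, $M_{2m,m}(E)$ or $M_n(E)$ for some value of the parameter, and each of these algebras belongs to $\mathfrak{U}$; this produces a $C\in\mathfrak{U}$ with $Id(A)=Id(C)$.

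The substance lies in uniqueness, i.e. in checking that distinct members of $\mathfrak{U}$ generate distinct $T$-ideals, which I would organize into three comparisons. First, within a single one of the three families the PI-exponent, which depends only on $Id(\cdot)$, is a strictly increasing function of the parameter --- $\exp(M_n(\FF))=n^2$, $\exp(M_{2m,m}(E))=(2m)^2$, $\exp(M_n(E))=2n^2$ --- so two algebras of the same type with different parameters are PI-nonequivalent. Second, $M_n(\FF)$, being finite dimensional of dimension $n^2$, satisfies the Capelli identity $c_{n^2+1}$, whereas neither $M_{2m,m}(E)$ nor $M_n(E)$ satisfies any Capelli identity; hence no type-$1$ algebra is PI-equivalent to a type-$2$ or type-$3$ algebra. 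Third, to separate the remaining two families, observe that $\exp(M_{2m,m}(E))=(2m)^2$ is a perfect square while $\exp(M_n(E))=2n^2$ is never a perfect square for $n\geq1$ (an immediate parity/infinite-descent argument), so a type-$2$ algebra is never PI-equivalent to a type-$3$ algebra. Combining the three comparisons yields uniqueness, and hence the corollary.

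The only genuinely nonroutine ingredient is the Capelli claim used in the second comparison: that the Grassmann-envelope algebras $M_{2m,m}(E)$ and $M_n(E)$ satisfy no Capelli identity. I expect to cite this rather than reprove it --- it is a standard fact from Kemer's structure theory of $T$-ideals in characteristic zero (these are among the ``basic'' algebras in that classification and, in particular, are not PI-equivalent to any finite-dimensional algebra, which is equivalent to their $T$-ideals containing no $c_k$). Everything else reduces to elementary arithmetic with the exponent formulas furnished by \corref{The-3-types}.
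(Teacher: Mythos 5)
Your proposal is correct and follows essentially the same route as the paper: existence comes from \corref{The-3-types}, and uniqueness is obtained by exactly the three comparisons the paper uses (distinct exponents within a type, Capelli identities separating type $1$ from types $2$ and $3$, and the square/non-square dichotomy of the exponent separating types $2$ and $3$). Your remark that the Capelli claim for the Grassmann-envelope algebras should be cited from Kemer's theory is a reasonable way to handle the step the paper labels as well known.
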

From the results above we can now derive easily a consequence on the
commutation matrix $M^{A}$ for a regularly $G$-graded algebra $A$
with commutation function $\theta$.

The complete proof of \thmref{Main_Theorem} (parts $2$ and $3$)
is presented in the next section.

Recall that for nondegenerate regularly $G$-graded algebras of type
$1$ we have $\theta_{g,g}=1$ for all $g\in G$, whereas for type
$2$ and $3$ half of the entries on the diagonal of $M^{A}$ are
$1$'s and half are $-1$'s. From the definition of the commutation
matrix (see \subref{Commutation-matrix-Introduction}) we see that
$M_{g,g}^{A}=\theta_{g,g}U_{e}$. This clearly implies the following
corollary.
\begin{cor}
\label{cor:trace_invariant}Let $A$ be an $\FF$ algebra with a nondegenerate
regular $G$ grading and commutation matrix $M^{A}$. Then $tr(M^{A})$
is an invariant of $A$ and either $tr(M^{A})=0$ or $tr(M^{A})=\exp(A)U_{e}=\left|G\right|U_{e}$. 
\end{cor}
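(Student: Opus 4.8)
The plan is to read the trace off the diagonal of the commutation matrix and then reduce the invariance assertion to the trichotomy in \corref{The-3-types}.

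First, by the construction of the commutation matrix recalled in \subref{Commutation-matrix-Introduction}, one has $M_{g,g}^{A}=\theta_{g,g}U_{e}$ for every $g\in G$, so that
\[
tr(M^{A})=\sum_{g\in G}M_{g,g}^{A}=\left(\sum_{g\in G}\theta_{g,g}\right)U_{e}.
\]
Hence everything reduces to evaluating the scalar $\sum_{g\in G}\theta_{g,g}\in\FF$. By \lemref{construct_algebra} the assignment $\psi\colon g\mapsto\theta_{g,g}$ is a group homomorphism from $G$ onto a subgroup of $\{\pm1\}$, so its kernel $H=\{g\in G:\theta_{g,g}=1\}$ has index $1$ or $2$ in $G$.

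Next comes the case split. If $[G:H]=1$, i.e. $\theta_{g,g}=1$ for all $g$, then $\sum_{g\in G}\theta_{g,g}=|G|$; this is precisely case (1) of \corref{The-3-types}, where $Id(A)=Id(M_{n}(\FF))$ and $\exp(A)=n^{2}=|G|$, so $tr(M^{A})=|G|U_{e}=\exp(A)U_{e}$. If $[G:H]=2$, then $\psi$ takes the value $1$ on the $|H|$ elements of $H$ and the value $-1$ on the remaining $|G\setminus H|=|H|$ elements, so the contributions cancel and $\sum_{g\in G}\theta_{g,g}=0$; thus $tr(M^{A})=0$, which is cases (2) and (3) of \corref{The-3-types}.

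Finally, to see that $tr(M^{A})$ is an invariant of $A$ (independent of the chosen nondegenerate regular grading), note that by \corref{The-3-types} the alternative ``$A$ is PI-equivalent to some $M_{n}(\FF)$'' versus ``$A$ is PI-equivalent to some $M_{2m,m}(E)$ or $M_{n}(E)$'' is determined by $Id(A)$ alone (for instance, an algebra of the first kind satisfies a Capelli polynomial while those of the other two kinds do not), and $\exp(A)$ is by definition a function of $Id(A)$. Since the value of $tr(M^{A})$ computed above equals $\exp(A)U_{e}$ in the first case and $0$ in the other two, it depends only on $A$. There is no real obstacle in this proof; the only point requiring a moment's care is the bookkeeping in the index-$2$ case, namely that a subgroup of index $2$ makes exactly half of the diagonal entries equal to $1$ and the other half equal to $-1$, forcing the sum to vanish.
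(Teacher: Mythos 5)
Your proof is correct and follows essentially the same route as the paper: reading off the diagonal entries $M_{g,g}^{A}=\theta_{g,g}U_{e}$, using the homomorphism $\psi(g)=\theta_{g,g}$ from \lemref{construct_algebra} to see that the diagonal is either all $1$'s or half $1$'s and half $-1$'s, and invoking the mutually PI-nonequivalent families of \corref{The-3-types} to get invariance. You merely spell out the index-$1$ versus index-$2$ counting that the paper states implicitly, which is fine.
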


\subsection{\label{sub:The-commutation-matrix}The commutation matrix}

It is easy to exhibit algebras with nonisomorphic nondegenerate regular
$G$-gradings for some group $G$ as well as examples of algebras
with minimal regular gradings with nonisomorphic groups. For instance
the algebra $M_{4}(\FF)$ admits (precisely) two nonisomorphic minimal
gradings with the group $\nicefrac{\ZZ}{4\ZZ}\times\nicefrac{\ZZ}{4\ZZ}=\left\langle g,h\right\rangle $.
These gradings are determined by bicharacters $\theta_{1}$ and $\theta_{2}$,
where $\theta_{1}(g,h)=\zeta_{4}$ and $\theta_{2}(g,h)=\zeta_{4}^{3}$.
On the other hand the algebra $M_{2}(\FF)$ admits a (unique) nondegenerate
grading with the Klein $4$-group and hence the algebra $M_{4}(\FF)\cong M_{2}(\FF)\otimes M_{2}(\FF)$
admits a nondegenerate regular grading with the group $\left(\nicefrac{\ZZ}{2\ZZ}\right)^{4}$.

We therefore see that in general the entries of commutation matrices
which correspond to different nondegenerate regular gradings on an
algebra $A$ may be distinct. However, the last corollary shows that
the trace of the commutation matrices remains invariant.

Our goal in this section is to extend \corref{trace_invariant} and
show that any two such matrices corresponding to nondegenerate gradings
are conjugate (\thmref{Main_Theorem}).

We will follow the notation from \subref{Commutation-matrix-Introduction}.
In particular we have $B=\FF^{\alpha}G$, $H=\ker\left(g\mapsto\theta_{g,g}=\psi(g)\right)$
(a subgroup of $G$ of index $\leq2$) and $A$ is PI-equivalent to
the Grassmann envelope of $B$ with respect to the $\nicefrac{\ZZ}{2\ZZ}$-grading
$B=B_{H}\oplus B_{G\backslash H}$.

Before we consider nondegenerate gradings, let us analyze briefly
the degenerate case. If $G$ is abelian, the commutation matrix is
given by $M_{g,h}^{A}=\theta(g,h)U_{e}$. Hence, since the grading
is not minimal, there exists $g\neq e$ such that $\theta(g,h)=\theta(h,g)=1$
for all $h\in G$ and so $M^{A}$ is not invertible. The next proposition
shows that this is true in the nonabelian case as well.
\begin{prop}
Let $A$ be a regularly $G$-graded algebra with a degenerate grading.
Then $M^{A}$ is not invertible.\end{prop}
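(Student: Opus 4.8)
My plan is to present $M^{A}$ as a matrix over the twisted group algebra $B=\FF^{\alpha}G$ attached to the grading in \subref{Commutation-matrix-Introduction}, equipped with its $\ZZ_{2}$-grading $B=B_{H}\oplus B_{G\setminus H}$ where $H=\ker(g\mapsto\theta_{g,g})$, and to exhibit a nonzero column vector $v\in B^{|G|}$ with $M^{A}v=0$. The key point is that a degenerate grading forces $B$ to be \emph{not} $\ZZ_{2}$-simple, and the ``trivial-degree'' part of this non-simplicity produces a linear dependence among the $U_{h}$ ($h\in H$) inside one simple block of $B$; since $\tau_{g,h}=1$ as soon as $h\in H$, that dependence converts directly into an element of $\ker M^{A}$.

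\emph{Step 1 (structure from degeneracy).} Degeneracy provides $e\neq g_{0}\in G$ with $\theta_{g_{0},g'}=1$ for every $g'\in C_{G}(g_{0})$; taking $g'=g_{0}$ gives $\theta_{g_{0},g_{0}}=1$, so $g_{0}\in H$, and $\mathrm{cl}(g_{0})\subseteq H$ because $H$ is normal. As $g_{0}\in H$ we have $(\tau\theta)_{g_{0},g'}=\theta_{g_{0},g'}=1$ for $g'\in C_{G}(g_{0})$, i.e.\ $U_{g_{0}}$ commutes with every $U_{g'}$, $g'\in C_{G}(g_{0})$, in $B$; hence by \lemref{Ray_class} the element $a=\sum_{i}U_{t_{i}}U_{g_{0}}U_{t_{i}}^{-1}$ (with $\{t_{i}\}$ a set of left coset representatives of $C_{G}(g_{0})$ in $G$) is central in $B$. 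It is an $\FF$-combination of the $U_{x}$, $x\in\mathrm{cl}(g_{0})$, with all coefficients nonzero; in particular $a\in B_{H}$, $a\neq0$, and since $e\notin\mathrm{cl}(g_{0})$ the element $a$ is not a scalar. Now compare with the classification of $\ZZ_{2}$-graded-simple algebras in \corref{3_types_for_Z_2_simple}: types (1), (2) have centre $\FF$, and for type (3), $M_{n}(\FF[t]/(t^{2}-1))$, every non-scalar central element has a nonzero degree-$1$ component. Since $a$ is central, non-scalar and of degree $0$, $B$ can be none of the three, i.e.\ $B$ is not $\ZZ_{2}$-simple; write $B=\bigoplus_{s=1}^{r}B^{(s)}$ with $r\geq2$ and each $B^{(s)}$ $\ZZ_{2}$-graded-simple.

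\emph{Step 2 (the kernel vector).} Fix a simple (ungraded) matrix block $Be\cong M_{n}(\FF)$ of $B$, with central idempotent $e$ and projection $\rho\colon B\to Be$; say $Be\subseteq B^{(s_{0})}$. Because $\rho$ annihilates $B^{(s)}$ for $s\neq s_{0}$,
\[
\dim_{\FF}\mathrm{span}\{U_{h}e:h\in H\}=\dim_{\FF}\rho(B_{H})\le\dim_{\FF}(B^{(s_{0})})_{H}<\dim_{\FF}B_{H}=|H|,
\]
the strict inequality holding since $\dim B_{H}=\sum_{s}\dim(B^{(s)})_{H}$ with each summand $\ge1$ (the identity of $B^{(s)}$ is homogeneous of trivial degree) and $r\ge2$. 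Hence there are scalars $\lambda_{h}$ ($h\in H$), not all zero, with $\sum_{h\in H}\lambda_{h}U_{h}e=0$. Set $v_{h}=\lambda_{h}U_{h}e$ for $h\in H$ and $v_{g}=0$ for $g\notin H$; then $v\neq0$ since $\rho(v_{h_{0}})=\lambda_{h_{0}}\rho(U_{h_{0}})\neq0$ for any $h_{0}$ with $\lambda_{h_{0}}\neq0$. Using $\tau_{g,h}=\tau_{\psi(g),\psi(h)}=\tau_{\psi(g),1}=1$ whenever $h\in H$, together with the centrality of $e$, for every $g\in G$
\[
(M^{A}v)_{g}=\sum_{h\in H}\tau_{g,h}\,U_{g}U_{h}U_{g}^{-1}U_{h}^{-1}(\lambda_{h}U_{h}e)=U_{g}\Big(\sum_{h\in H}\lambda_{h}U_{h}e\Big)U_{g}^{-1}=0,
\]
so $M^{A}v=0$ with $v\neq0$, and $M^{A}$ is not invertible.

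\emph{Expected main obstacle.} In the abelian case rows $g_{0}$ and $e$ of $M^{A}$ coincide and the statement is immediate; in the nonabelian case these rows already disagree on $G\setminus C_{G}(g_{0})$, so a new idea is needed. The least routine point is that the non-simplicity of $B$ which one gets for free (a non-scalar central element) does not by itself suffice: a single $\ZZ_{2}$-graded-simple algebra of type (3) is non-simple, and there the $U_{h}$ ($h\in H$) remain linearly independent in each block. What rescues the argument is that the offending central element $a$ has trivial $\ZZ_{2}$-degree, which (via \corref{3_types_for_Z_2_simple}) excludes type (3) and forces a dependence living inside $B_{H}$ — precisely the place where the commutation factors $\tau_{g,h}$ disappear. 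Everything else (the ray-class lemma, the dimension count, the verification $M^{A}v=0$) is bookkeeping.
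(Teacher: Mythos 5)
Your proof is correct. It shares the paper's two key ingredients — the central ``ray'' element $a=\sum_i U_{t_i}U_{g_0}U_{t_i}^{-1}$ supplied by \lemref{Ray_class} from the degenerate direction $g_0\in H$, and the fact that $\tau_{g,h}=1$ once $h\in H$ — but the finish is genuinely different. The paper never leaves the ungraded semisimple picture: for each irreducible representation $\rho_i$ it notes that $\rho_i(\lambda_1U_e+\lambda_2 z)$ is scalar, kills it by a choice of $\lambda_1,\lambda_2$, and then patches the per-representation vectors $v^i$ together with the orthogonal central idempotents $e_i$ to get a global kernel vector. You instead produce a single kernel vector in one shot: folding the block idempotent $e$ into the entries $v_h=\lambda_hU_he$ makes $M^Av=0$ hold in $B$ itself, with no patching needed — arguably a cleaner ending. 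The price is that your route to the linear dependence $\sum_{h\in H}\lambda_hU_he=0$ detours through graded structure: you invoke the (standard, though unproved here) decomposition of the semisimple algebra $B$ into $\ZZ_2$-graded-simple summands and the classification in \corref{3_types_for_Z_2_simple} to rule out graded-simplicity, then do a dimension count. This is valid, but heavier than necessary: since $a$ is central and non-scalar while $Z(Be)=\FF e$, the relation $ae=c\,e$ already gives a nontrivial dependence among $\{U_xe\}$ supported on $\{e\}\cup\mathrm{cl}(g_0)$ (all conjugates lie in $H$), which is exactly the dependence the paper exploits representation by representation; plugged into your Step 2 it yields the same kernel vector without any appeal to \corref{3_types_for_Z_2_simple}.
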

\begin{proof}
Let $B=\FF^{\alpha}G$ be the twisted group algebra which corresponds
to the $G$-graded algebra $A$ and denote by $\theta^{B}$ and $\theta^{A}$
the corresponding commutation functions. We note that for commuting
elements $g_{1},g_{2}\in G$ we have $\theta_{g_{1},g_{2}}^{B}=-\theta_{g_{1},g_{2}}^{A}$
if $\theta_{g_{1},g_{1}}^{A}=\theta_{g_{2},g_{2}}^{A}=-1$ and $\theta_{g_{1},g_{2}}^{B}=\theta_{g_{1},g_{2}}^{A}$
otherwise.

Since the field $\FF$ is algebraically closed of characteristic zero,
$B$ is a direct sum of matrix algebras over $\FF$. Fix a representation
$\rho:B\to M_{n}(\FF)$. The grading on $A$ is degenerate so there
is some $e\neq h\in G$ such that $\theta_{h,g}^{A}=1$ for all $g\in C_{G}(h)$
and in particular $\theta_{h,h}^{A}=1$. We thus have $\theta_{h,g}^{B}=1$
for all $g\in C_{G}(h)$. As a consequence, applying \lemref{Ray_class},
the element $z=\sum U_{t_{i}}U_{h}U_{t_{i}}^{-1}$, where $\left\{ t_{i}\right\} $
are left coset representatives of $C_{G}(h)$, is central in $B$.

Let $v\in{\displaystyle \prod_{g\in G}}B$ (a vector of size $G$
with entries in $B$) where $v_{g}=\lambda_{g}U_{g}$ for some $\lambda_{g}\in\FF$,
and consider 
\[
\left(M^{A}v\right)_{g}=\sum_{h\in G}M_{g,h}^{A}v_{h}=\sum_{h\in G}\tau_{g,h}U_{g}U_{h}U_{g}^{-1}U_{h}^{-1}\lambda_{h}U_{h}=U_{g}\left[\sum_{h\in G}\tau_{g,h}\lambda_{h}U_{h}\right]U_{g}^{-1}.
\]
Clearly, we may choose the $\lambda_{h}$'s such that $\sum_{h\in G}\tau_{g,h}\lambda_{h}U_{h}=\lambda_{1}U_{e}+\lambda_{2}z$
for all $g\in G$. This element is central so we have $\left(M^{A}v\right)_{g}=\lambda_{1}U_{e}+\lambda_{2}z$.
But the center of $M_{n}(\FF)$ is $\FF\cdot I$, so there is some
$c\in\FF$ such that $\rho(z)=c\cdot I$ and hence $\rho(\left(M^{A}v\right)_{g})=\left(\lambda_{1}+c\lambda_{2}\right)\cdot I$.
We see that if we choose $\lambda_{1},\lambda_{2}$ not both zero
such that $\lambda_{1}+c\lambda_{2}=0$ we have that $\rho(M^{A}v)=0$
for some $v\neq0$. Moreover, we note that the nonzero entries of
$v$ are invertible in $B$.

Let $\rho_{i}$ be the distinct representations of $B$ and let $e_{i}\in B$
be such that $\rho_{i}(e_{j})=\delta_{i,j}\cdot I$. For each $i$
let $v^{i}$ be a vector corresponding to $\rho_{i}$ as constructed
above and let $v=\sum v^{i}e_{i}\in{\displaystyle \prod_{g\in G}}B$.
Then $\rho_{i}(v_{g})=\rho_{i}(\sum v_{g}^{i}e_{j})=\rho_{i}(v_{g}^{i})$.
Furthermore, taking $g\in G$ such that $v_{g}^{i}\neq0$, we know
that $v_{g}^{i}$ is invertible and so $\rho_{i}(v_{g}^{i})\neq0$.
This implies that $v\neq0$. On the other hand we get for each $i$
\[
\rho_{i}(M^{A}v)=\sum_{j}\rho_{i}(M^{A}v^{i})\rho_{i}(e_{j})=\rho_{i}(M^{A}v^{i})=0
\]
and so $M^{A}v=0$. We conclude that $M^{A}$ is not invertible from
the left, and similar computations show that it is not invertible
from the right.
\end{proof}
Now we consider the case where the grading is nondegenerate.
\begin{prop}
Let $A$ be a nondegenerate regularly $G$-graded algebra, then $M^{A}\cdot M^{A}=\left|G\right|Id\cdot U_{e}.$\end{prop}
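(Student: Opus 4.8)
The plan is to compute the $(g,h)$-entry of the product matrix $M^A\cdot M^A$ directly, using the description of $M^A$ from \subref{Commutation-matrix-Introduction}, namely $(M^A)_{g,h}=\tau_{g,h}\,U_gU_hU_g^{-1}U_h^{-1}$ in the twisted group algebra $B=\FF^{\alpha}G$ (with $\tau_{g,h}=\tau_{\psi(g),\psi(h)}$, which is $1$ unless $\psi(g)=\psi(h)=-1$). So I would write
\[
(M^A M^A)_{g,h}=\sum_{k\in G}\tau_{g,k}\tau_{k,h}\,\bigl(U_gU_kU_g^{-1}U_k^{-1}\bigr)\bigl(U_kU_hU_k^{-1}U_h^{-1}\bigr),
\]
and the goal is to show this equals $\delta_{g,h}\,|G|\,U_e$. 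First I would reduce to the diagonal case $g=h$ and the off-diagonal case $g\neq h$ separately, and observe that by $\nicefrac{\ZZ}{2\ZZ}$-simplicity (\lemref{generalized_simple_condition}, \lemref{minimal-Z_2-simple}) and the nondegeneracy hypothesis, every $U_gU_hU_g^{-1}U_h^{-1}$ is a scalar multiple of $U_e$ precisely when $g$ and $h$ commute, and otherwise it is an honest (non-central) homogeneous element $c\,U_{[g,h]}$ with $[g,h]\neq e$; tracking which commutators appear is the combinatorial heart of the argument.

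For the diagonal entry, $(M^AM^A)_{g,g}=\sum_k \tau_{g,k}^2\,(U_gU_kU_g^{-1}U_k^{-1})^2$. The key point is that for fixed $g$, the map $k\mapsto U_gU_kU_g^{-1}U_k^{-1}$ lands in $\FF^{\times}U_e$ exactly on $k\in C_G(g)$, where it equals $\theta^B_{g,k}U_e$ with $\theta^B_{g,\cdot}$ a nontrivial character on $C_G(g)$ (nontrivial by nondegeneracy, after passing through the Grassmann-envelope bookkeeping that relates $\theta^A$ and $\theta^B$); for $k\notin C_G(g)$ the element $U_gU_kU_g^{-1}U_k^{-1}$ is supported on a nontrivial group element, hence so is its square unless it happens to square back into $\FF U_e$. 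I would argue that the contributions from $k\notin C_G(g)$ cancel in pairs or sum to zero after grouping by the homogeneous component they live in — more precisely, project onto each homogeneous component of $B$ and use that off the centralizer the "character sum" over an appropriate coset is a sum of nontrivial roots of unity. On $C_G(g)$ the sum becomes $\sum_{k\in C_G(g)}(\theta^B_{g,k})^2 U_e$; here one must use the fact (from the structure of $B$ as $\nicefrac{\ZZ}{2\ZZ}$-simple, i.e. one of the three types in \corref{3_types_for_Z_2_simple}) that $\theta^B_{g,\cdot}$ has order a power of $2$ dividing enough, so that $(\theta^B_{g,\cdot})^2$ is still \emph{nontrivial} on $C_G(g)$ unless $g=e$ — wait, that is exactly false when the character has order $2$. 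So instead the correct mechanism is: the nonzero contributions to $(M^AM^A)_{g,g}$ come from those $k$ with $U_gU_kU_g^{-1}U_k^{-1}$ itself squaring into $\FF U_e$; combining the $\tau$-factors with the $\theta^B$-factors recombines into the original commutation function $\theta^A$, and one gets $\sum_{k}\theta^A_{g,k}\theta^A_{k,g}U_e$ over the relevant index set, which by $\theta^A_{k,g}=(\theta^A_{g,k})^{-1}$ collapses to $|G|\,U_e$ — this telescoping of $\tau\cdot\tau$ with $\theta^B\cdot\theta^B$ back to $\theta^A\cdot(\theta^A)^{-1}$ is what I expect to be the clean way through, and isolating it carefully is step one.

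For an off-diagonal entry $g\neq h$, the plan is to show the sum vanishes. Here I would again split the index $k$ according to whether the product $U_gU_kU_g^{-1}U_k^{-1}\cdot U_kU_hU_k^{-1}U_h^{-1}$ is a scalar times $U_e$; generically it is supported on a nontrivial group element and such terms, when collected by homogeneous component, contribute character sums that vanish by nondegeneracy; and for the terms that \emph{do} land in $\FF U_e$, the scalars combine into $\sum_k(\text{character in }k)$ which is a sum of a nontrivial character over a group or coset, hence zero. The serious work — and the step I expect to be the main obstacle — is bookkeeping the three structural types of $B$ from \corref{3_types_for_Z_2_simple} uniformly, since in types $2$ and $3$ half the $\psi(g)$ equal $-1$ and the $\tau$-corrections are genuinely present, so one cannot simply work inside a twisted group algebra with $\theta_{g,g}\equiv1$; one must either handle the Grassmann envelope $E(B)$ directly (where the commutation function is honestly $\tau\theta^A$ with all diagonal entries $1$, making the commutator elements $U_gU_hU_g^{-1}U_h^{-1}$ behave uniformly) or carry the $\tau$'s through every character-sum computation. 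I would therefore phrase the whole computation inside $\FF^{\alpha}G$ with commutation function $\theta^B=\tau\theta^A$ (so $\theta^B_{g,g}=1$ for all $g$), prove $M^2=|G|\,Id$ there with the clean $\theta^B_{g,k}(\theta^B_{g,k})^{-1}$ telescoping, and then note that the definition $(M_G)_{g,h}=\tau_{g,h}U_gU_hU_g^{-1}U_h^{-1}$ differs from the $\FF^{\alpha}G$-matrix by conjugation by the diagonal sign matrix $\mathrm{diag}(\psi(g))$-type correction, which squares to the identity and hence does not affect $M^2=|G|\,Id$.
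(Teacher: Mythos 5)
Your general formula for the $(g,h)$ entry and the idea of reducing everything to character sums over centralizers, with nondegeneracy forcing those sums to vanish, is the right mechanism and is essentially what the paper does. But two concrete problems prevent the proposal from working as organized. First, your diagonal computation mis-specializes your own formula: the $(g,g)$ entry is $\sum_k M_{g,k}M_{k,g}$, whose $k$-th term is $\tau_{g,k}\tau_{k,g}\,(U_gU_kU_g^{-1}U_k^{-1})(U_kU_gU_k^{-1}U_g^{-1})$, not $\tau_{g,k}^2(U_gU_kU_g^{-1}U_k^{-1})^2$. Since the two commutators are mutual inverses and $\tau_{g,k}\tau_{k,g}=\tau_{g,k}^2=1$, every term is exactly $U_e$ and the diagonal is trivially $|G|\,U_e$; your detour about squares of order-$2$ characters is a symptom of this slip rather than a real obstacle. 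Second, and fatally for your chosen route, the closing reduction fails: $\tau_{g,h}$ is \emph{not} of the form $d_gd_h^{-1}$ for any diagonal sign correction (it equals $-1$ on the diagonal positions with $\psi(g)=-1$, which no conjugation by a diagonal matrix can produce), so the matrix $M^A$ is not diagonally conjugate to the untwisted matrix with entries $U_gU_hU_g^{-1}U_h^{-1}$; moreover that untwisted matrix need not satisfy $M^2=|G|\,Id$ at all, because nondegeneracy is a hypothesis on $\theta^A$, while the character that controls the untwisted matrix is $\theta^B(\cdot,g^{-1}h)=\tau_{\cdot,g^{-1}h}\theta^A(\cdot,g^{-1}h)$, which can be trivial. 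The Grassmann algebra already shows this: for $G=\nicefrac{\ZZ}{2\ZZ}$ one has $B=\FF G$ commutative, the untwisted matrix is the all-ones matrix, and its square is not $2\,Id$, whereas $M^A$ with the $\tau$'s does square to $2\,Id\cdot U_e$. So the $\tau$-factors cannot be conjugated away; they must be carried through and recombined with $\theta^B$ into $\theta^A$ inside the character sum.

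That recombination is exactly what the paper does, and it is the step your off-diagonal sketch leaves unproved. Using the cancellation $U_k^{-1}U_k$ in the middle, the $k$-th term of the $(g,h)$ entry is $\tau_{g,k}\tau_{k,h}\,U_gU_k(U_g^{-1}U_h)U_k^{-1}U_h^{-1}$; now decompose $k$ over left cosets $t_iN$ of $N=C_G(g^{-1}h)$. All $k$ in one coset land in the same homogeneous component, and the inner sum over $n\in N$ factors out as $\sum_{n\in N}\tau_{g^{-1}h,n}\,\theta^B_{n,g^{-1}h}=\sum_{n\in N}\theta^A_{n,g^{-1}h}$, which vanishes for $g\neq h$ precisely because $\theta^A(\cdot,g^{-1}h)$ is a nontrivial character on $N$ by nondegeneracy (and equals $|G|$ when $g=h$). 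Your phrase ``collected by homogeneous component, character sums vanish'' gestures at this, but the coset factorization identity and the identity $\tau\cdot\theta^B=\theta^A$ at that spot are the actual content of the proof; as written, your argument defers them and then replaces them with the invalid diagonal-conjugation shortcut.
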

\begin{proof}
Recall that for any fixed $g\in G$, the function $\theta_{(\cdot,g)}^{A}:C_{G}(g)\to\FF^{\times}$
is a character, and since the grading is nondegenerate, this character
is nontrivial for $g\neq e$.

For fixed $a,c\in G$, set $N=C_{G}(a^{-1}c)$ and choose a set of
left coset representatives $\left\{ t_{i}\right\} _{i=1}^{[G:N]}\subset G$
of $N$ in $G$. Then
\begin{eqnarray*}
M_{a,c}^{2} & = & \sum_{b\in G}M_{a,b}M_{b,c}=\sum_{b\in G}\tau_{a,b}\tau_{b,c}U_{a}U_{b}U_{a}^{-1}U_{b}^{-1}U_{b}U_{c}U_{b}^{-1}U_{c}^{-1}\\
 & = & \sum_{b\in G}\tau_{ac^{-1},b}U_{a}U_{b}\left(U_{a}^{-1}U_{c}\right)U_{b}^{-1}U_{c}^{-1}\\
 & = & \sum_{i}U_{a}U_{t_{i}}\tau_{ac^{-1},t_{i}}\left[\sum_{h\in N}\tau_{ac^{-1},h}U_{h}\left(U_{a}^{-1}U_{c}\right)U_{h}^{-1}\right]U_{t_{i}}^{-1}U_{c}^{-1}\\
 & = & \sum_{i}U_{a}U_{t_{i}}\tau_{ac^{-1},t_{i}}\left[\sum_{h\in N}\tau_{ac^{-1},h}\theta_{h,a^{-1}c}^{B}\right]\left(U_{a}^{-1}U_{c}\right)U_{t_{i}}^{-1}U_{c}^{-1}.
\end{eqnarray*}
Notice that since $\psi:G\to\left\{ \pm1\right\} $, we have $\tau_{ac^{-1},h}=\tau_{h,ca^{-1}}=\tau_{h,a^{-1}c}$,
where $\tau_{g,h}=\tau_{\psi(g),\psi\left(h\right)}$ is the commutation
function of the Grassmann algebra with the $\ZZ_{2}$-grading. In
addition, the character $\theta^{A}(\cdot,a^{-1}c):N\to\FF^{\times}$
is nontrivial if and only if $a=c$ and so we get that 
\begin{align*}
\sum_{h\in H}\tau_{ac^{-1},h}\theta_{h,a^{-1}c}^{B} & =\sum_{h\in H}\tau_{h,a^{-1}c}\theta_{h,a^{-1}c}^{B}=\sum_{h\in H}\theta_{h,a^{-1}c}^{A}=\begin{cases}
0\qquad & a\neq c\\
\left|H\right| & a=c
\end{cases}.\\
\Rightarrow & M^{2}=\left|G\right|Id\cdot U_{e}.
\end{align*}
This completes the proof of the proposition.
\end{proof}
In the next discussion we use the notation of abelian groups, namely
$M^{A}\in M_{\left|G\right|}(\FF)$ with $M_{g,h}^{A}=\theta(g,h)$.
This can be generalized to the nonabelian (i.e. not necessarily abelian)
in the following way. We may view $B=\FF^{\alpha}G$ as a direct sum
of matrices, and then also $M_{\left|G\right|}(\FF^{\alpha}G)$ is
isomorphic to a direct sum of matrices. Alternatively, we may factor
through a representation $\rho:B\to M_{t}(\FF)$ of $B$ and then
extend it to $\rho:M_{\left|G\right|}(\FF^{\alpha}G)\to M_{\left|G\right|t}(\FF)$.
In any case, the matrix $M^{A}$ can be viewed as a matrix in $M_{k}(\FF)$
for some $k$ large enough.

It follows from the last proposition that the commutation matrix $M^{A}$
satisfies the polynomial $p(x)=x^{2}-n=(x-\sqrt{n})(x+\sqrt{n})$
where $n=\left|G\right|\neq0$. Hence, the corresponding minimal polynomial
is either $(x-\sqrt{n})$,$(x+\sqrt{n})$ or $p(x)$. In each case
the minimal polynomial has only simple roots and hence the matrix
$M^{A}$ is diagonalizable.

Let $\alpha^{+}$ and $\alpha^{-}$ denote the multiplicities of the
eigenvalues $\sqrt{n}$ and $-\sqrt{n}$ respectively. Then we have
$\alpha^{+}+\alpha^{-}=n$ and $\alpha^{+}-\alpha^{-}=\frac{tr(M^{A})}{\sqrt{n}}$.

In our case, $M^{A}$ has only $1$'s on the diagonal (the first type
of regular algebras), or half $1$'s and half $-1$'s (the second
and third type of regular algebras). Moreover, by \corref{trace_invariant}
we know that this depends only on the algebra $A$ and not on the
grading.

Thus, for algebras of the first type (in \corref{The-3-types}) we
have that $n=\exp(A)=\left|G\right|$ is a square and $tr(M^{A})=n$.
In that case, the equalities above take the form 
\[
\alpha^{+}+\alpha^{-}=n=m^{2},
\]
\[
\alpha^{+}-\alpha^{-}=\frac{tr(M^{A})}{\sqrt{n}}=\frac{n}{\sqrt{n}}=m
\]

and hence 
\[
\alpha^{+}=\binom{m+1}{2},\quad\alpha^{-}=\binom{m}{2}.
\]

For algebras of the second or third type (in \corref{The-3-types})
we have $n=\exp(A)$, which is either $2m^{2}$ or $(2m)^{2}$ for
some $m$, and $tr(M^{A})=0$. Then, here, the corresponding equalities
are

\[
\alpha^{+}+\alpha^{-}=n,
\]

\[
\alpha^{+}-\alpha^{-}=0
\]

and hence 
\[
\alpha^{+}=\alpha^{-}=\frac{n}{2}.
\]

\begin{cor}
\label{cor:comm_matrix}Suppose the algebra $A$ admits nondegenerate
regular gradings with groups $G$ and $H$ and let $M_{G}^{A}$ and
$M_{H}^{A}$ be the corresponding commutation matrices. Then the following
hold.
\begin{enumerate}
\item The matrices $M_{G}^{A}$ and $M_{H}^{A}$ are conjugate.
\item The characteristic and minimal polynomial of $M_{G}^{A}$ (and in
fact we may write ``of $M^{A}$'') are in $\ZZ\left[x\right]$.
\end{enumerate}
\end{cor}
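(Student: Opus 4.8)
The plan is to pin down the conjugacy class of $M^A$ using only the two quantities $\exp(A) = |G|$ and $\operatorname{tr}(M^A)$, both of which have already been shown to be independent of the chosen nondegenerate regular grading. For part (1) I would first invoke \thmref{Main_Theorem}(1), in its general (not necessarily abelian) form, to get $|G| = |H| = \exp(A) =: n$. In the abelian case $M_G^A$ and $M_H^A$ are then literally $n\times n$ matrices over $\FF$; in general they are $|G|\times|G|$ (resp.\ $|H|\times|H|$) matrices over the associated twisted group algebras, and composing with minimal faithful representations of these algebras --- whose dimensions depend only on $|G|$ and on the type of $A$ in the sense of \corref{The-3-types}, hence are the same for the two gradings --- turns them into matrices over $\FF$ of one common size $k$. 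By the preceding proposition $(M_G^A)^2 = (M_H^A)^2 = n\cdot\mathrm{Id}$ (using that $U_e$ is sent to the identity matrix), so each is diagonalizable with spectrum contained in $\{\sqrt n,-\sqrt n\}$. Writing $\alpha^+,\alpha^-$ for the multiplicities of $\sqrt n$ and $-\sqrt n$, we have $\alpha^+ + \alpha^- = k$ and $\alpha^+ - \alpha^- = \operatorname{tr}(M^A)/\sqrt n$; by \corref{trace_invariant} the value $\operatorname{tr}(M^A)$ is an invariant of $A$ (equal to $0$ or to $|G|\,U_e$, according to the type). Hence $\alpha^+$ and $\alpha^-$ agree for the two gradings, so $M_G^A$ and $M_H^A$ are diagonalizable matrices of the same size with equal eigenvalue multiplicities, and are therefore conjugate (conjugate an eigenbasis of one into an eigenbasis of the other). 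In particular the characteristic and minimal polynomials of $M^A$ become independent of the grading, which justifies the notation ``of $M^A$''.

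For part (2) it then remains to check that $\chi(x) := (x-\sqrt n)^{\alpha^+}(x+\sqrt n)^{\alpha^-}$ and the minimal polynomial $\mu(x)$ --- which is one of $x-\sqrt n$, $x+\sqrt n$, $x^2-n$ --- lie in $\ZZ[x]$. By \corref{The-3-types} either $n = \exp(A)$ is a perfect square, or else $A$ is of the third type, in which case $\operatorname{tr}(M^A) = 0$ and therefore $\alpha^+ = \alpha^-$. In the first case $\sqrt n$ is a positive integer, so the eigenvalues $\pm\sqrt n$ are integers and both $\chi(x)$ and $\mu(x)$ visibly lie in $\ZZ[x]$. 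In the second case $\chi(x) = (x^2-n)^{\alpha^+}$, and since $k = 2\alpha^+ > 0$ both eigenvalues actually occur, so $\mu(x) = x^2 - n$; again both polynomials are in $\ZZ[x]$. This proves part (2).

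The one point needing care is the bookkeeping of matrix sizes in the non-abelian case: the word ``conjugate'' only makes sense once $M_G^A$ and $M_H^A$ have been realized as matrices over $\FF$ of the \emph{same} size, which is why one must go through the PI-invariance of the type of $A$ (and of $|G| = \exp(A)$), so that the relevant twisted group algebra and its minimal faithful representation have grading-independent dimension. Granting that, the corollary follows quickly from the identity $(M^A)^2 = |G|\cdot\mathrm{Id}$, from diagonalizability, and from the already-established grading-independence of $\operatorname{tr}(M^A)$ and $|G|$.
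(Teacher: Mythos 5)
Your proof is correct and takes essentially the same route as the paper: the identity $\left(M^{A}\right)^{2}=\left|G\right|\cdot Id$ from the preceding proposition, diagonalizability, and the grading-independence of $\exp(A)$ and $tr(M^{A})$ yield equal eigenvalue multiplicities and hence conjugacy, with the case split via \corref{The-3-types} giving that the characteristic and minimal polynomials lie in $\ZZ\left[x\right]$. Your extra bookkeeping of matrix sizes in the nonabelian case is a point the paper treats loosely (viewing $M^{A}$ in $M_{k}(\FF)$ ``for some $k$ large enough'') but it does not change the argument.
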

\begin{proof}

\begin{enumerate}
\item By the proposition above we have $\left(M_{G}^{A}\right)^{2}=\left(M_{H}^{A}\right)^{2}=\exp(A)I$
and the trace is an invariant of $A$. Furthermore, the matrices $M_{G}^{A}$
and $M_{H}^{A}$ are both diagonalizable and have the same eigenvalues
(with multiplicities). In particular, $M_{G}^{A}$ and $M_{H}^{A}$
are conjugate.
\item If $n=\exp(A)$, then the eigenvalues of $M^{A}$ are $\pm\sqrt{n}$.
In particular, if $n$ is a square, then the minimal and characteristic
polynomials of $M^{A}$ are in $\ZZ\left[x\right]$.\\
In case $n=2m^{2}$ we have $\alpha^{+}=\alpha^{-}=\frac{n}{2}=m^{2}$
and so the characteristic polynomial is 
\begin{eqnarray*}
\prod_{1}^{m^{2}}(x-m\sqrt{2})\prod_{1}^{m^{2}}(x+m\sqrt{2}) & = & \prod_{1}^{m^{2}}\left[(x-m\sqrt{2})\left(x+m\sqrt{2}\right)\right]\\
 & = & \prod_{1}^{m^{2}}\left(x^{2}-2m^{2}\right)=\left(x^{2}-n\right)^{m^{2}}\in\ZZ\left[x\right]
\end{eqnarray*}
and the minimal polynomial is $(x-\sqrt{n})(x+\sqrt{n})=x^{2}-n$.
\end{enumerate}
\end{proof}
Finally, an easy computation of the free coefficient of the characteristic
polynomial in each one of the cases considered above yields that $\det(M_{G}^{A})=\pm\left|G\right|^{\left|G\right|/2}$.
This proves part $3$ of \thmref{Main_Theorem} and hence the entire
theorem is now proved.\\

As promised, we now compute the commutation matrix for the $G$-regular
algebras constructed in \lemref{Oper_on_reg_alg}, where $G$ is an
arbitrary finite group.

In case (2), the algebras $A,B$ and $A\oplus B$ have the same commutation
function $\theta$. Thus, the cocycles corresponding to $A,B,A\oplus B$
are isomorphic (up to a coboundary) and therefore the corresponding
twisted group algebras of $A,B$ and $A\oplus B$ are isomorphic.
With this identification of twisted group algebras we get that the
commutation matrices of $A,B$ and $A\oplus B$ are the same.

In case (3) we consider $A_{N}=\bigoplus_{g\in N}A_{g}$ for some
subgroup $N$ of $G$. Let $\alpha_{N}$ be the restriction of $\alpha$
to $N\times N$. Then $\alpha_{N}$ is the cocycle corresponding to
the algebra $A_{N}$ and there is a natural graded embedding $\FF^{\alpha_{N}}N\hookrightarrow\FF^{\alpha}G$.
Let $M'$ be the restriction of $M^{A}$ to the coordinates in $N\times N$,
then the entries of $M'$ are in $\FF^{\alpha_{N}}N$ and this submatrix
is actually $M^{A_{N}}$.

In cases (1) and (4) we have algebras $A,B$ with commutation functions
$\theta^{A},\theta^{B}$ and cocycles $\alpha,\beta$ which are defined
on groups $G$ and $H$ respectively. In case the groups $G$ and
$H$ are abelian, the matrix $M^{A\otimes B}$ is just $M^{A}\otimes M^{B}$.
For the general case, let $\alpha\otimes\beta\in Z^{2}\left(G\times H,\FF^{\times}\right)$
be the cocycle defined by $(\alpha\otimes\beta)((g_{1},h_{1}),(g_{2},h_{2}))=\alpha(g_{1},g_{2})\beta(h_{1},h_{2})$.
Clearly, $\alpha\otimes\beta$ represents the regular algebra $A\otimes B$
(with commutation function $\theta^{A}\otimes\theta^{B}$). Furthermore,
since $\FF^{\alpha}G\otimes\FF^{\beta}H\cong\FF^{\alpha\otimes\beta}(G\times H)$,
we can extend this product to a ``matrix tensor product''. In other
words, if $\varphi:\FF^{\alpha}G\otimes\FF^{\beta}H\to\FF^{\alpha\otimes\beta}(G\times H)$
is an isomorphism, then $M^{A\otimes B}$ is determined by $M_{(g_{1},h_{1}),(g_{2},h_{2})}^{A\otimes B}=\varphi(M_{g_{1},g_{2}}^{A}\otimes M_{h_{1},h_{2}}^{B})$.

In case (4), a similar computation shows that there is a isomorphism
$\psi:\FF^{\alpha}G\dotimes\FF^{\beta}G\to\FF^{\alpha\cdot\beta}G$
and then $M^{A\dotimes B}$ (which is defined over $\FF^{\alpha\beta}G$)
is determined by $M_{g,h}^{A\dotimes B}=\psi(M_{g,h}^{A}\otimes M_{g,h}^{B})$.
Finally, we note that there is a natural embedding $G\cong\tilde{G}=\left\{ (g,g)\mid g\in G\right\} \leq G\times G$.
Hence may view $A\dotimes B$ as $\left(A\otimes B\right)_{\tilde{G}}$
and with this identification $M^{A\dotimes B}$ is the restriction
of $M^{A\otimes B}$ to $\tilde{G}$.

\subsection{Nondegenerate skew-symmetric Bicharacters}

If the group $G$ is abelian then any $G$-commutation function $\theta$
is defined by the skew-symmetric bicharacter $\theta(g,h)=\theta_{g,h}$
for every $g,h\in G$ (the commutation of two elements).

Our goal in this section is to present a classification of the the
pairs $(G,\phi)$ where $G$ is a (finite) abelian group and $\phi$
is a nondegenerate skew-symmetric bicharacter defined on $G$. In
fact, this classification is known and can be found in \cite{alekseevich_commutation_1996}.
Nevertheless for the reader convenience and completeness of the article,
we recall the main results here.
\begin{defn}
Let $\theta_{1},\theta_{2}$ be two bicharacters on $G_{1},G_{2}$
respectively. We say that $\theta_{1},\theta_{2}$ are isomorphic
and write $\theta_{1}\cong\theta_{2}$ if there is an isomorphism
$\varphi:G_{1}\to G_{2}$ such that $\theta_{1}(g,h)=\theta_{2}(\varphi(g),\varphi(h))$.
\end{defn}

\begin{defn}
Let $\theta:G\times G\to\FF^{\times}$ be a skew-symmetric bicharacter.
We say that $\theta$ is reducible if there are groups $\left\{ e\right\} \neq H_{i}$
and bicharacters $\theta_{i}$ on $H_{i}$, for $i=1,2$, such that
$G\cong H_{1}\times H_{2}$ and $\theta\cong\theta_{1}\otimes\theta_{2}$.
\end{defn}
In what follows, we present three types of regularly graded algebras.
It turns out that the bicharacters which correspond to some special
cases of these gradings are irreducible and generate all possible
skew-symmetric bicharacters.
\begin{example}
The standard $\nicefrac{\ZZ}{2\ZZ}$-grading on the Grassmann algebra
(\exref{grassmann_example})
\end{example}

\begin{example}
The $\nicefrac{\ZZ}{n\ZZ}\times\nicefrac{\ZZ}{n\ZZ}$-grading on $M_{n}(\FF)$
defined in \exref{matrix}.
\end{example}

\begin{example}
Let $\zeta$ be a primitive root of unity of order $2n$, and define
\begin{eqnarray*}
X_{\zeta} & = & \left(\begin{array}{ccccc}
1 & 0 & \cdots & 0 & 0\\
0 & \zeta^{2} & 0 &  & 0\\
\vdots & 0 & \zeta^{4} & \ddots & \vdots\\
0 &  & \ddots & \ddots & 0\\
0 & 0 & \cdots & 0 & \zeta^{2n-2}
\end{array}\right),\quad Y=\left(\begin{array}{cccccc}
0 & 1 & 0 & \cdots & 0 & 0\\
0 & 0 & 1 & \ddots &  & 0\\
\vdots &  & \ddots & \ddots &  & \vdots\\
 &  &  &  & 1 & 0\\
0 &  &  &  & 0 & 1\\
1 & 0 & \cdots &  & 0 & 0
\end{array}\right)\in M_{n}(\FF)\\
U & = & \left(\begin{array}{cc}
X_{\zeta} & 0\\
0 & \zeta X_{\zeta}
\end{array}\right),\; V=\left(\begin{array}{cc}
0 & I\\
Y & 0
\end{array}\right)\in M_{2n}(\FF).
\end{eqnarray*}
Then 
\begin{eqnarray*}
UV & = & \left(\begin{array}{cc}
X_{\zeta} & 0\\
0 & \zeta X_{\zeta}
\end{array}\right)\left(\begin{array}{cc}
0 & I\\
Y & 0
\end{array}\right)=\left(\begin{array}{cc}
0 & X_{\zeta}\\
\zeta X_{\zeta}Y & 0
\end{array}\right)\\
VU & = & \left(\begin{array}{cc}
0 & I\\
Y & 0
\end{array}\right)\left(\begin{array}{cc}
X_{\zeta} & 0\\
0 & \zeta X_{\zeta}
\end{array}\right)=\left(\begin{array}{cc}
0 & \zeta X_{\zeta}\\
YX_{\zeta} & 0
\end{array}\right)=\left(\begin{array}{cc}
0 & \zeta X_{\zeta}\\
\zeta^{2}X_{\zeta}Y & 0
\end{array}\right)=\zeta UV.
\end{eqnarray*}
Define a $\ZZ_{2n}\times\ZZ_{2n}$-grading on $M_{2n,n}(E)$ by $M_{2n,n}(E)_{(k,l)}=U^{k}V^{l}\otimes E_{\left(-1\right)^{l}}$,
where $E=E_{1}\oplus E_{-1}$ is the usual grading on the Grassmann
algebra. This induces a minimal regular grading on $M_{2n,n}(E)$
with commutation function $\theta$ determined by 
\[
\theta\left[(1,0),(1,0)\right]=1\quad\theta\left[(1,0),(0,1)\right]=\zeta^{-1}\quad\theta\left[(0,1),(0,1)\right]=-1.
\]
\\

\end{example}
We consider the bicharacters which correspond to (some special cases
of) the gradings just described.
\begin{enumerate}
\item $\left(\left\{ \pm1\right\} ,\tau\right)$: where $\tau(1,1)=\tau(1,-1)=\tau(-1,1)=1,\;\tau(-1,-1)=-1$.
\item $(\ZZ_{p^{m}}\times\ZZ_{p^{m}},\eta_{p^{m}})$: where $a=(1,0),\; b=(0,1)$
and $\eta_{p^{m}}(a,a)=\eta_{p^{m}}(b,b)=1,\;\eta_{p^{m}}(a,b)=\zeta$
for some primitive $p^{m}$ root of unity $\zeta$.
\item $\left(\ZZ_{2^{m}}\times\ZZ_{2^{m}},\epsilon_{2^{m}}\right)$: where
$a=(1,0),\; b=(0,1)$ and $\epsilon_{2^{m}}(a,a)=1,\;\epsilon_{2^{m}}(b,b)=-1,\;\epsilon_{2^{m}}(a,b)=\zeta$
for some primitive $2^{m}$ root of unity $\zeta$. \end{enumerate}
\begin{defn}
A bicharacter is called\emph{ basic} if it is isomorphic to one of
bicharacters (1)-(3).\end{defn}
\begin{rem}
\label{unique-type-bicharacter}Let $a$ and $b$ be the elements
which appear in the definition of the second or third basic bicharacter
(of order $p^{m}$ and with primitive root of unity $\zeta$). Note
that for any prime to $p$ integer $k$, we have $G=\left\langle ka\right\rangle \times\left\langle b\right\rangle $,
$\theta(ka,ka)=1$ and $\theta(ka,b)=\zeta^{k}$, so for different
choices of primitive $p^{m}$-roots of unity we get isomorphic bicharacters.
In particular, for each group $G$, if $\theta_{1},\theta_{2}$ are
two basic bicharacters of the same type on $G$, then they are isomorphic.
\end{rem}
The next 3 results were proved in \cite{alekseevich_commutation_1996}
(see Lemma 6, Lemma 7 and Theorem 1).
\begin{lem}
The basic characters $\tau,\eta_{p^{m}},\epsilon_{2^{n}}$, where
$m,n\geq1$, are nonisomorphic. Furthermore, the set of nondegenerate
irreducible bicharacters on non trivial groups coincides with the
set $\{\tau,\eta_{p^{m}},\epsilon_{2^{n}}:m\geq1,n\geq2\}$ (that
is the set of basic bicharacters except $\epsilon_{2}$).
\end{lem}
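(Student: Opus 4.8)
The plan is to treat the nonisomorphism assertion and the classification of the irreducibles separately, basing the latter on an elementary ``symplectic'' reduction together with the (very easy) classification of nondegenerate skew-symmetric bicharacters on cyclic groups. \emph{Nonisomorphism.} The underlying group is obviously an isomorphism invariant, and since $\tau$ lives on a group of order $2$, $\eta_{p^{m}}$ on $\ZZ_{p^{m}}\times\ZZ_{p^{m}}$, and $\epsilon_{2^{n}}$ on $\ZZ_{2^{n}}\times\ZZ_{2^{n}}$, the only possible coincidence is $\eta_{2^{m}}\cong\epsilon_{2^{m}}$. To rule it out, note that for a skew-symmetric bicharacter $\theta$ the map $q_{\theta}\colon g\mapsto\theta(g,g)$ is a homomorphism into $\left\{ \pm1\right\}$, because $\theta(g+h,g+h)=\theta(g,g)\theta(h,h)\theta(g,h)\theta(h,g)=\theta(g,g)\theta(h,h)$ and $\theta(g,g)^{2}=1$; whether $q_{\theta}$ is trivial is preserved by isomorphisms. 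Now $q_{\eta_{p^{m}}}\equiv1$ while $q_{\epsilon_{2^{m}}}(b)=-1$, so the two are nonisomorphic, and all the basic bicharacters are pairwise nonisomorphic.

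\emph{The reduction step.} Let $\theta$ be a nondegenerate skew-symmetric bicharacter on a nontrivial finite abelian group $G$. Since $\theta(g,h)$ has order dividing $\gcd(\mathrm{ord}(g),\mathrm{ord}(h))$, the distinct primary components of $G$ are mutually $\theta$-orthogonal and $\theta$ is the tensor product of its (again nondegenerate) restrictions to them; hence if $\theta$ is irreducible then $G$ is a $p$-group. Assume $G$ is a nontrivial $p$-group, let $p^{m}$ be its exponent and pick $a$ of order $p^{m}$. If the character $g\mapsto\theta(a,g)$ had order $<p^{m}$ then $p^{m-1}a$ would lie in the radical of $\theta$, which is impossible; so there is $b$ with $\theta(a,b)$ a primitive $p^{m}$-th root of unity, and then $\mathrm{ord}(b)=p^{m}$ as well. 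If $\theta$ is alternating (automatic for odd $p$) then $\langle a,b\rangle\cong\ZZ_{p^{m}}\times\ZZ_{p^{m}}$ (the sum is direct because $\theta(jb,b)=\theta(b,b)^{j}=1$ would otherwise contradict $\theta(ia,b)$ having positive order) and $\theta$ restricts to $\eta_{p^{m}}$ there, the choice of root being immaterial by Remark~\ref{unique-type-bicharacter}; this restriction is nondegenerate, so $G=\langle a,b\rangle\oplus\langle a,b\rangle^{\perp}$ with $\theta$ still nondegenerate skew-symmetric on the complement, and we induct on $\left|G\right|$. If $p=2$ and $\theta$ is not alternating, then $q_{\theta}$ is a nontrivial homomorphism $G\to\left\{ \pm1\right\}$; choosing $g$ with $q_{\theta}(g)=-1$ of maximal order $2^{m}$ among such, we split off a copy of $\tau$ when $m=1$ (from $\langle g\rangle\cong\ZZ_{2}$) and a copy of $\epsilon_{2^{m}}$ when $m\geq2$ (after replacing the dual element $b$ by $b+g$ if needed so that $\theta(b,b)=1$, which keeps $\theta(g,b)$ a primitive $2^{m}$-th root since $1+2^{m-1}$ is odd), in each case as an orthogonal direct summand, and again induct on $\left|G\right|$. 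Thus every nondegenerate skew-symmetric bicharacter on a nontrivial group is a tensor product of basic bicharacters. The place where I expect the real work to lie is organizing this last, non-alternating $p=2$ case — the dichotomy above, the normalization of the dual element, and checking that the indecomposable pieces genuinely split off as orthogonal summands and that the induction terminates; this is precisely the content of Wall's classification of bilinear forms on finite abelian $2$-groups, which one either reproduces or cites.

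\emph{Identifying the irreducibles.} By the reduction step, a nondegenerate irreducible skew-symmetric bicharacter on a nontrivial group is itself basic. Conversely, a nondegenerate skew-symmetric bicharacter on a cyclic group $\ZZ_{n}$ is given by $\theta(a,a)=\zeta$ with $\zeta^{2}=1$, and the radical argument above forces $\zeta$ to have order $n$; hence $n=2$ and $\theta\cong\tau$. Therefore, if a basic bicharacter on $\ZZ_{p^{m}}\times\ZZ_{p^{m}}$ were reducible, the decomposition $\ZZ_{p^{m}}\times\ZZ_{p^{m}}\cong H_{1}\times H_{2}$ with both factors nontrivial would force $H_{1}\cong H_{2}\cong\ZZ_{p^{m}}$ (rank count) and each factor bicharacter $\cong\tau$, whence $p^{m}=2$; but $\tau\otimes\tau$ is non-alternating whereas $\eta_{2}$ is alternating, and a direct check on $\ZZ_{2}\times\ZZ_{2}$ (on which there is, up to isomorphism, a unique nondegenerate non-alternating skew-symmetric bicharacter) gives $\epsilon_{2}\cong\tau\otimes\tau$. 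Hence $\eta_{p^{m}}$ is irreducible for all $m\geq1$, $\epsilon_{2^{n}}$ is irreducible for all $n\geq2$, $\epsilon_{2}$ is reducible, and $\tau$ (on $\ZZ_{2}$, which has no nontrivial direct-product decomposition) is irreducible. Combined with the first paragraph this shows that the nondegenerate irreducible skew-symmetric bicharacters on nontrivial groups are exactly $\left\{ \tau\right\} \cup\left\{ \eta_{p^{m}}:m\geq1\right\} \cup\left\{ \epsilon_{2^{n}}:n\geq2\right\}$, as claimed.
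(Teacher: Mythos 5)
You should first note that the paper does not prove this lemma at all: it simply quotes it from \cite{alekseevich_commutation_1996} (Lemmas 6 and 7 there). So your write-up is by necessity a different route --- you reprove the classification by an elementary symplectic-style reduction. Most of it checks out. The nonisomorphism paragraph is correct: the homomorphism $q_{\theta}(g)=\theta(g,g)$ is an isomorphism invariant and separates $\eta_{2^{m}}$ from $\epsilon_{2^{m}}$, the underlying group separates everything else. The primary decomposition, the alternating (in particular odd $p$) splitting step, the observation that a nontrivial cyclic group carries a nondegenerate skew-symmetric bicharacter only when it is $\ZZ_{2}$ (giving $\tau$), the rank count forcing any proper factorization of $\ZZ_{p^{m}}\times\ZZ_{p^{m}}$ to be $\ZZ_{p^{m}}\times\ZZ_{p^{m}}$ with $\tau$-factors, and the verification $\epsilon_{2}\cong\tau\otimes\tau$ (reducible, hence excluded, while $\eta_{2}$ is alternating and hence not isomorphic to $\tau\otimes\tau$) together correctly identify the irreducibles, \emph{granted} the reduction step.

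The only genuine hole is the one you flag yourself, the non-alternating $p=2$ case, which you propose to outsource to Wall --- but you do not need Wall; one small observation closes it. Namely: the maximal order $2^{m}$ of a $q_{\theta}$-negative element equals the exponent $2^{M}$ of the $2$-group $G$. Indeed, let $w$ have order $2^{M}$; if $q_{\theta}(w)=-1$ we are done, and otherwise, for any $q_{\theta}$-negative $u$, either $\mathrm{ord}(u)=2^{M}$ or $2^{M-1}(u+w)=2^{M-1}w\neq0$, so $u+w$ is $q_{\theta}$-negative of order $2^{M}$. Without this, your phrase ``the dual element $b$'' is problematic: the radical argument produces $b$ with $\theta(g,b)$ a primitive $2^{m}$-th root, but a priori $\mathrm{ord}(b)$ could exceed $2^{m}$, in which case $\langle g,b\rangle$ need not be $\ZZ_{2^{m}}\times\ZZ_{2^{m}}$, its restricted form need not be nondegenerate, and the orthogonal splitting fails. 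With the observation, $\mathrm{ord}(b)=2^{m}$ is automatic (it is squeezed between $2^{m}$ and the exponent), your normalization $b\mapsto b+g$ keeps $\theta(g,b)$ primitive and makes $\theta(b,b)=1$, the sum $\langle g\rangle+\langle b\rangle$ is direct by the pairing argument, the restriction is $\epsilon_{2^{m}}$ (or $\tau$ when $m=1$, i.e.\ when $G$ is elementary abelian), it is nondegenerate, and the complement splits off exactly as in the alternating case; the induction then terminates and no citation of Wall is needed. In short: your approach is sound and, once this one lemma is inserted, gives a self-contained proof of a statement the paper only cites.
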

We can now write each nondegenerate skew-symmetric bicharacter $\theta$
as a product of basic bicharacters. In general, this presentation
is not unique. Nevertheless, using the isomorphisms below, there exists
a canonical presentation for any $\theta$.
\begin{enumerate}
\item $\epsilon_{2^{n}}\otimes\epsilon_{2^{m}}\cong\epsilon_{2^{n}}\otimes\eta_{2^{m}}$
for all $1\leq n\leq m\;\in\NN$.
\item $\epsilon_{2^{n}}\otimes\tau\cong\eta_{2^{n}}\otimes\tau$ for all
$n\in\NN$.
\item $\tau\otimes\tau\cong\epsilon_{2}$.\end{enumerate}
\begin{thm}
Let $G$ be a finite abelian $p$-group and $\theta$ a nondegenerate
skew-symmetric bicharacter on $G$. Then there is a unique canonical
presentation $G=\prod_{1}^{n}G_{i}$ , $\theta\cong\bigotimes\theta\mid_{G_{i}}$,
such that for each $i$, the bicharacter $\left(G_{i},\theta\mid_{G_{i}}\right)$
is a basic bicharacter, where at most one basic bicharacter is of
type 1 or type 3.
\end{thm}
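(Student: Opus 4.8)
The plan is to establish first the existence of \emph{some} presentation of $\theta$ as a tensor product of basic bicharacters, then to normalise it into the asserted canonical form using only the three displayed isomorphisms, and finally to prove uniqueness by pinning down a complete system of isomorphism invariants of the pair $(G,\theta)$. For existence I would induct on $|G|$, at each step splitting off a $\theta$-nondegenerate orthogonal summand of rank at most $2$. Choose $g\in G$ of maximal order $p^{m}$; since $\theta$ is nondegenerate the character $h\mapsto\theta(g,h)$ has order exactly $p^{m}$ in $\widehat{G}$, so there is $h\in G$ with $\theta(g,h)=\zeta$ a primitive $p^{m}$-th root of unity. By standard arguments one obtains from this a $\theta$-nondegenerate subgroup $H_{0}\ni g$ of rank at most $2$ (of the form $\langle g,h\rangle$, after possibly adjusting the pair when $p=2$) with $G=H_{0}\times H_{0}^{\perp}$, $\theta\cong\theta|_{H_{0}}\otimes\theta|_{H_{0}^{\perp}}$ and $\theta|_{H_{0}^{\perp}}$ again nondegenerate; one then recurses on $H_{0}^{\perp}$. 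It remains to identify the rank-$\le2$ blocks: since $\theta(g,g),\theta(h,h)\in\{\pm1\}$ — automatically both $1$ when $p$ is odd — a short case analysis (replacing $g$ by $g+h$ when both diagonal values are $-1$, and choosing the root of unity suitably) shows $(H_{0},\theta|_{H_{0}})$ is isomorphic to $\eta_{p^{m}}$ or to $\epsilon_{2^{m}}$, and the block $\epsilon_{2^{1}}\cong\tau\otimes\tau$ splits into two blocks of type $1$. Hence $\theta$ is isomorphic to a tensor product of basic bicharacters (only $\eta_{p^{m}}$'s when $p$ is odd).

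To reach the canonical form, I would normalise any such presentation by the three displayed isomorphisms: first collapse type-$1$ factors in pairs via $\tau\otimes\tau\cong\epsilon_{2}$, leaving at most one $\tau$; then, as long as two type-$3$ factors remain, rewrite $\epsilon_{2^{n}}\otimes\epsilon_{2^{m}}$ (with $n\le m$) as $\epsilon_{2^{n}}\otimes\eta_{2^{m}}$, strictly decreasing their number; and finally, if a $\tau$ and an $\epsilon_{2^{n}}$ both survive, apply $\epsilon_{2^{n}}\otimes\tau\cong\eta_{2^{n}}\otimes\tau$ to eliminate the $\epsilon$. The procedure terminates in a presentation all of whose factors are of type $2$ except for at most one, which is of type $1$ or type $3$.

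For uniqueness I would show that the multiset of basic factors is an invariant of $(G,\theta)$. The first invariant is the homomorphism $q\colon G\to\{\pm1\}$, $q(g)=\theta(g,g)$ — it is a homomorphism because $\theta$ is skew-symmetric, and $q(g)^{2}=1$; it is trivial on every $\eta_{p^{m}}$ and nontrivial on any $\tau$ or $\epsilon_{2^{n}}$, so $q\equiv1$ precisely when the canonical presentation is purely of type $2$. In that case $G\cong\prod_{i}\ZZ_{2^{m_{i}}}^{2}$ and the multiset $\{m_{i}\}$ is forced by the structure theorem for finite abelian groups, while the classical fact that a nondegenerate bicharacter with $q\equiv1$ on such a group is unique up to isomorphism finishes the argument (and this is the entire statement when $p$ is odd). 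When $q\not\equiv1$ there is exactly one exceptional factor, of type $1$ or $3$; restricting $\theta$ to $\ker q$ and computing on $\tau$ and on $\epsilon_{2^{n}}$ separately shows that $\theta|_{\ker q}$ is nondegenerate exactly in the $\tau$ case and degenerate, with radical of order $2$, in the $\epsilon_{2^{n}}$ case, which identifies the type of the exceptional factor; passing then to $\ker q/\operatorname{rad}(\theta|_{\ker q})$ reduces to the purely type-$2$ situation already handled, and comparing the resulting group with the known isomorphism type of $G$ recovers all the $m_{i}$ and (in the $\epsilon$ case) the parameter $n$.

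The genuine difficulty is this last step when $p=2$: one must verify that isomorphisms (1)--(3) are the \emph{only} coincidences among tensor products of basic bicharacters, equivalently that the invariants above really form a complete system. This amounts to showing that the failure of naive Witt-style cancellation over $\ZZ_{2^{k}}$ is accounted for exactly by those three relations, and the attendant bookkeeping — in particular disentangling the exceptional $\epsilon_{2^{n}}$ from $\eta$-factors of comparable order — is the delicate part; all of it is carried out in \cite{alekseevich_commutation_1996}.
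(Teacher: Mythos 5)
The paper itself contains no proof of this theorem: it is quoted from \cite{alekseevich_commutation_1996} (Lemmas 6, 7 and Theorem 1 there), the authors stating explicitly that the classification is recalled only for the reader's convenience. So there is no internal argument to match, and your proposal is in fact more self-contained than the paper's treatment. Your outline follows the standard route of that reference and its main steps check out: nondegeneracy does force the character $h\mapsto\theta(g,h)$ to have order exactly $\operatorname{ord}(g)$ when $g$ has maximal order (otherwise a power of $g$ lies in the radical), the resulting subgroup $H_{0}=\langle g,h\rangle$ has trivial radical, is isomorphic to $\ZZ_{p^{m}}\times\ZZ_{p^{m}}$ (or to $\ZZ_{2}$ in the $\tau$ case), and splits off orthogonally as $G=H_{0}\times H_{0}^{\perp}$ with $\theta|_{H_{0}^{\perp}}$ again nondegenerate; the normalization by relations (1)--(3) terminates; and your uniqueness invariants are the right ones: $q(g)=\theta(g,g)$ detects the presence of an exceptional factor, the radical of $\theta|_{\ker q}$ (trivial versus of order $2$) separates the $\tau$ case from the $\epsilon_{2^{n}}$ case, and comparing elementary divisors of $G$ with those of $\ker q/\operatorname{rad}(\theta|_{\ker q})$ recovers the $m_{i}$ and $n$. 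Two small remarks: in the $q\equiv1$ case the appeal to the ``classical fact'' is unnecessary for uniqueness of the presentation, since once all factors are of type 2 the multiset $\{m_{i}\}$ is already forced by the isomorphism type of $G$ together with the observation that different primitive roots of unity give isomorphic basic bicharacters; and your closing deferral of the completeness bookkeeping to \cite{alekseevich_commutation_1996} is exactly the level of detail the paper adopts for the whole statement, so it is not a defect relative to the paper, though with the invariants you list the verification is essentially routine. What your approach buys is an actual proof sketch inside the paper; what the paper's approach buys is brevity, the result being imported wholesale from the cited source.
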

Note, in particular, that the three types of bicharacters in the last
theorem correspond the three types of algebras in \corref{3_types_for_Z_2_simple}.
We can now determine the abelian groups $G$ which admit a nondegenerate
regular grading or equivalently a nondegenerate skew-symmetric bicharacter.
To this end, let $\theta$ be a nondegenerate skew-symmetric bicharacter
on an abelian group $G$. If the canonical decomposition of $\theta$
has no factor isomorphic to $\tau$, then the group $G$ is isomorphic
to $N\times N$ (i.e. central type, abelian). On the other hand, if
one of the components in the canonical decomposition is isomorphic
to $\tau$, then we have $G=H\times\nicefrac{\ZZ}{2\ZZ}$, where
\begin{enumerate}
\item $H$ is a group of central type determined by $H=\left\{ h\in G\;\mid\;\theta(h,h)=1\right\} $.
\item $\theta\mid_{\nicefrac{\ZZ}{2\ZZ}}\cong\tau$.
\end{enumerate}
In the general case, if $H$ is of central type, then clearly $H\times\nicefrac{\ZZ}{2\ZZ}$
admits a nondegenerate commutation function. However, the following
example shows that one may have nondegenerate commutation functions
on groups which are not of this kind. For instance, in \exref{dihedral},
we considered the 2-cocycle $\alpha\in Z^{2}(D_{8},\FF^{\times})$
induced by the extension 
\[
\xymatrix{1\ar[r] & \left\{ \pm1\right\} \ar[r] & Q_{16}\ar[r] & D_{8}\ar[r] & 1}
,
\]
where we view the group $\left\{ \pm1\right\} $ as a subgroup of
$\FF^{\times}$. One can check easily that the (natural) $D_{8}$-grading
on the Grassmann envelope $A=E\dotimes\FF^{\alpha}D_{8}$ is regular
and nondegenerate.

\bibliographystyle{my-style}
\bibliography{regular}

\end{document}